\documentclass[a4paper,11pt,reqno]{amsart}
\usepackage[utf8]{inputenc}
\usepackage[english]{babel}
\usepackage{amsthm,amsmath,amsfonts,amssymb}
\usepackage{array,enumerate,float,moreverb,appendix,mathrsfs, mathabx, eucal}
\usepackage[svgnames]{xcolor}
\usepackage[final]{graphicx}
\usepackage[colorlinks=true,citecolor=Blue,linkcolor=FireBrick]{hyperref}
\usepackage[lofdepth,lotdepth]{subfig}
\captionsetup[subfigure]{labelfont=rm}
\usepackage[left=2.5 cm,right=2.5cm,top=2.5cm,bottom=2.5cm]{geometry}
\numberwithin{equation}{section}

\theoremstyle{plain}
\newtheorem{theorem}{Theorem}
\newtheoremstyle{thm}
  {12pt}
  {7pt}
  { \slshape}
  {}
  {\bfseries \scshape}
  {. }
  { }
  {}
\theoremstyle{thm} 
\newtheorem{prop}{Proposition}
\newtheorem{lem}[prop]{Lemma}
\newtheorem{corol}[prop]{Corollary}

\newtheorem{remarque}[prop]{Remark}
\newtheorem{exemple}[prop]{Example}
\newtheorem{defin}[prop]{Definition}
\newtheorem{notation}[prop]{Notation}
\numberwithin{prop}{section}
\usepackage{etoolbox}
\patchcmd{\section}{\scshape}{\scshape\large}{}{}
\patchcmd{\section}{.7}{1.4}{}{}
\patchcmd{\section}{.5}{1.2}{}{}
\title{The spectral gap of sparse random digraphs.}
\author{Simon Coste}
\date{\today}
\setcounter{tocdepth}{1}
\newcommand{\A}{50}
\begin{document}
\bibliographystyle{alpha}

\begin{abstract}

The second largest eigenvalue of a transition matrix $P$ has connections with many properties of the underlying Markov chain, and especially its convergence rate towards the stationary distribution. In this paper, we give an asymptotic upper bound for the second eigenvalue when $P$ is the transition matrix of the simple random walk over a random directed graph with given degree sequence. This is the first result concerning the asymptotic behavior of the spectral gap for sparse non-reversible Markov chains with an unknown stationary distribution. An immediate consequence of our result is a proof of the Alon conjecture for directed regular graphs. 
\end{abstract}

\maketitle
\tableofcontents

\section{Introduction and statement of the results.}

\subsection{Directed configurations.}\label{section_modele}

Given two $n$-tuples of positive integers, say $(d_1^+, \dotsc , d_n^+)$ and $(d_1^-, \dotsc , d_n^-)$, we build a sequence of directed multigraphs $G_1$, $G_2$, \ldots using the configuration model: at each of the $n$ vertices (labeled from $1$ to $n$), we glue tails and heads. The vertex $i$ has $d_i^+$ heads and $d_i^-$ tails. For consistency we ask the total number of tails to be equal to the total number of heads: 
\begin{equation}\label{eq:condition_halfedges}
\sum_{i=1}^n d_i^+ = \sum_{i=1}^n d_i^- := M.
\end{equation}

We then choose uniformly at random a matching of the tails into the heads, that is a random permutation $\sigma_n \in \mathfrak{S}_M$. If $\mathbf{e}$ is a head attached to vertex $x$, we glue it to the tail $\sigma_n(\mathbf{e})=\mathbf{f}$. If $\mathbf{f}$ is attached to vertex $y$, this gives rise to an oriented edge from $x$ to $y$. The whole construction leads to a directed multigraph $G_n$ (we will often say \emph{digraph})  on $n$ vertices called the \textit{directed configuration graph} associated with the so-called \emph{degree sequence} $d_1^+, d_1^-, \dotsc,  d_n^+, d_n^-$. The permutation $\sigma_n$ will sometimes be called the \emph{environment}. 

The random graph $G_n$ will simply be noted $G$, the $n$-dependence being implicit through all this paper. We are interested in properties of $G$ in the asymptotic regime $n \to \infty$: we say that an event depending on $n$ holds \emph{with high probability} if its probability tends to $1$ as $n \to \infty$. 

If $u$ is a vertex, we will adopt the following notations: $E^+(u) $ is the set of all heads attached to $u$, and $E^-(u)$ is the set of all tails attached to $u$. Therefore, $\#E^+(u) = d_u^+$ and $\#E^-(u) = d_u^-$. Through all this paper, and unless specified otherwise, heads will be denoted by the bold letter $\mathbf{e}$ and tails by $\mathbf{f}$.

\subsection{Statement of the theorem and illustrations.}

The transition probability matrix $P$ on the graph $G$ is defined as follows: 
\begin{equation}\label{definP}
P(u,v) = \frac{\# \{\mathbf{e} \in E^+(u): \sigma(\mathbf{e}) \in E^-(v) \}}{d_u^+}.
\end{equation}

The matrix $P$ is thus a random stochastic matrix. The \emph{eigenvalues} of $P$ are the $n$ complex roots (counted with multiplicity) of its characteristic polynomial $\mathrm{det}(P-z\mathrm{I})$. We order them by decreasing modulus: 
\[
|\lambda_n|\leqslant |\lambda_{n-1}|\leqslant \dotsb \leqslant |\lambda_2|\leqslant \lambda_1 = 1.
\]  Recall that all those eigenvalues are random variables depending implicitly on $n$ and on the degree sequence $(d_i^+, d_i^-)_{i \leqslant n}$. We will impose that all the degrees are bounded independently on $n$, meaning that there are two constants $\delta \geqslant 2$ and $\Delta \geqslant \delta$ such that for every $n$, 
\begin{equation}\label{H}\tag{H1}
\delta \leqslant \min \{d_1^+, d_1^-, \dotsc , d_n^+, d_n^- \}  \qquad \text{and} \qquad  \max \{d_1^+, d_1^-, \dotsc , d_n^+, d_n^- \} \leqslant \Delta. 
\end{equation}

Under the first assumption, the minimal degree is greater than two (which means there are no dead-ends) and the graph $G$ is strongly connected with high probability as shown in \cite{cooper-frieze}. Let us introduce a  central parameter of this model:
\begin{equation}\label{defin:rho}
\rho := \sqrt{ \frac{1}{M}\sum_{i=1}^n \frac{d_i^-}{d_i^+} } .
\end{equation}

Our goal is to link the modulus of the second eigenvalue with $\rho$. The main result is the following theorem.

\begin{theorem}\label{mainthm}
Let $P$ be the transition matrix \eqref{definP} of the random digraph associated with the degree sequence $(d_1^+, d_n^-, \dotsc, d_n^+, d_n^-)$ satisfying \eqref{H}. Let $\rho$ be as in \eqref{defin:rho} and define $\tilde{\rho} = \rho \vee \delta^{-1}$. Then, as $n$ goes to infinity, we have for every $\varepsilon>0$: 
\begin{equation}\label{claim}
\lim_{n \to \infty} \mathbf{P}\left( |\lambda_2|> \tilde{\rho} + \varepsilon \right) = 0.
\end{equation}
\end{theorem}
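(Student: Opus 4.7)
My plan is to apply the moment method, adapted to the non-normal setting of random digraphs. Since $P$ is not reversible, its eigenvalues cannot be controlled by a simple quadratic form in $P$, and one is naturally led to linearize on the $M$-dimensional space of half-edges. A natural auxiliary object is the half-edge walk operator
$B(\mathbf{e},\mathbf{e}') = (d_u^+)^{-1} \mathbf{1}\{\sigma(\mathbf{e}) \in E^-(u),\, \mathbf{e}' \in E^+(u)\}$
(where $u$ denotes the vertex at which $\mathbf{e}'$ is attached); a short algebraic identity relates $P^k$ to $B^{k-1}$ sandwiched between head/tail incidence operators, so that the non-trivial spectrum of $P$ is determined by that of $B$. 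It then suffices to control the spectral radius of $R := B - \Pi$, where $\Pi$ is an appropriate rank-one Perron projector.

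To bound $\|R^{\ell}\|$ at scale $\ell \sim c \log n$, I would use the standard inequality $|\lambda_2|^{2\ell} \leqslant \mathrm{tr}(R^\ell (R^*)^\ell)$ and then take expectation with respect to the uniform matching $\sigma_n$. The right-hand side becomes a weighted sum over pairs of closed walks of length $\ell$ on the half-edge set. Each step of each walk contributes a multiplicative factor $1/d_u^+$, and a direct computation already yields $\mathbf{E}[\mathrm{tr}(BB^*)] = M\rho^2$, so that a generic \emph{tree-like} walk picks up the product $\prod d_i^-/d_i^+$ over visited vertices, producing $\rho$ in the limit. Following the strategy initiated by Friedman and refined by Bordenave in the undirected case, I would classify walk pairs by the excess of their underlying multigraph and restrict to tangle-free configurations; this should give $\|R^{\ell}\| \leqslant (\rho + o(1))^\ell \cdot \mathrm{poly}(n)$, with the polynomial absorbed by the choice $\ell = c \log n$.

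Two obstacles stand out. The first is the combinatorial control of tangled walks: the maximum tolerated excess has to be tuned jointly with $\ell$ so that a probabilistic saving of order $n^{-c \cdot \mathrm{excess}}$ beats the enumerative cost, and a simultaneous truncation on the number of visits to each vertex is likely necessary to keep the branching weights under control. The second, more serious issue is the $\delta^{-1}$ floor: a purely tree-based trace argument would yield only the sharper clean bound $\rho + \varepsilon$, so the weaker conclusion when $\rho < \delta^{-1}$ must reflect pseudo-eigenvalues generated by non-normality near vertices of minimum in-degree $\delta$. Accommodating them will likely require either an explicit truncation of $B$ isolating the degree-$\delta$ vertices, or a separate resolvent argument showing that the deflation $B - \Pi$ cannot push the operator norm below $\delta^{-1}$ uniformly. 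This interplay between non-normality and the degree lower bound is, I expect, the most delicate part of the proof.
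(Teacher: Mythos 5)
Your overall frame (trace method on tangle-free walks, classification by excess, in the spirit of Friedman and Bordenave) is the right family of techniques, but the proposal leaves unresolved exactly the point that is the paper's main contribution: the centering. You set $R = B - \Pi$ with ``an appropriate rank-one Perron projector'', but in this model the Perron left-eigenvector (the stationary distribution) is random and unknown, so $\Pi$ cannot be written down, and expectations of traces of powers of $B-\Pi$ cannot be computed over the uniform matching --- this is precisely what breaks the reduction that works in the $d$-regular undirected case, where the Perron vector is deterministic. The paper circumvents this differently: it centers at the annealed level, replacing each indicator $\mathbf{1}_{\sigma(\mathbf{e})=\mathbf{f}}$ by $\mathbf{1}_{\sigma(\mathbf{e})=\mathbf{f}} - 1/M$ to form $\underline{P}^{(t)}$, uses a telescoping identity to write $P^t = \underline{P}^{(t)} - \sum_\ell P^{\ell-1}\mathbf{1}(\pi^-)^\top \underline{P}^{t-\ell} + M^{-1}\sum_\ell R^{t,\ell}$ (the $R^{t,\ell}$ being tangled remainders), compares $P^t$ to the explicit rank-one matrix $\mathbf{1}\, y^\top$ with $y = \frac1n (P^t)^\top\mathbf{1}$, controls $\Vert y\Vert$ via the annealed collision estimate $\mathbb{P}(X_t=Y_t)=O(\ln(n)^2/n)$ of Bordenave--Caputo--Salez, and concludes with a quantitative Bauer--Fike lemma for rank-one perturbations. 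Without some substitute for this chain of ideas, your plan does not get off the ground: ``sandwiching'' the spectrum of $P$ by that of $B$ does not remove the need to know (or approximate) the Perron data before deflating.

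Two further points. First, the quantitative scheme ``$|\lambda_2|^{2\ell}\leqslant \mathrm{tr}(R^\ell (R^*)^\ell)$, take one expectation, absorb $\mathrm{poly}(n)$ by $\ell = c\log n$'' fails: tangle-freeness forces $\ell \leqslant \alpha\log_\Delta n$ with $\alpha$ small, so $\mathrm{poly}(n)^{1/\ell}$ is a constant strictly larger than $1$, not $1+o(1)$. One must take high moments of the norm, $\mathbf{E}\big[\Vert \underline{P}^{(t)}\Vert^{2m}\big]$ with $m \asymp \ln n/\ln\ln n$, so that the prefactor is only polylogarithmic and its $t$-th root is $1+o(1)$; this is what the paper (and Bordenave's proof) does. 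Second, your diagnosis of the $\delta^{-1}$ floor as a pseudospectral effect of non-normality near minimum-degree vertices, requiring a truncation or a separate resolvent argument, is off the mark: in the paper $\tilde\rho=\rho\vee\delta^{-1}$ emerges from the same moment computation, because steps along repeated edges carry deterministic weight at most $1/\delta$ (the term $\delta^{-2(tm-v)}$ for walks visiting $v<tm$ distinct vertices), and the geometric sums over $v$ are then dominated by $(c\tilde\rho)^{2tm}$. No additional mechanism isolating degree-$\delta$ vertices is needed, and nothing in the proof asserts a matching lower bound at level $\delta^{-1}$.
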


Hence, for every $\varepsilon>0$, with high probability as $n$ goes to infinity, the second eigenvalue satisifies 
\[
|\lambda_2|\leqslant \max \left\lbrace \frac{1}{\delta}, \sqrt{ \frac{1}{M}\sum_{i=1}^n \frac{d_i^-}{d_i^+} }. \right\rbrace + \varepsilon. 
\]
This theorem only provides an upper bound for $|\lambda_2|$; knowing if the bound is optimal and having a symmetric lower bound are questions not adressed in this paper. The following figure shows an illustration of \eqref{claim}.

\begin{remarque}\label{rk:h2}
When $\delta^{-1}$ is  smaller than $\rho$, the bound of theorem \ref{mainthm} is equal to $\rho$. This happens when 
\begin{equation}\label{H2}\delta\rho>1\end{equation} and this is not always verified as shown in the following example:
\[
\begin{cases}d_i^+ = d_i^- = 2 \qquad \forall i \in \{1, ..., 100\}\\d_i^+ = d_i^- = 8 \qquad \forall i \in \{101, \dotsc , 200\}.\end{cases}
\]
This degree sequence satisfies $\rho = \sqrt{n/M} = \sqrt{200/1000} \simeq 0.45 $ and in this case we have $\delta \rho <1$.  In fact, using Jensen's inequality, one can give a slightly stronger form of \eqref{H2}. Let $\pi^-$ be the so-called \emph{in-degree distribution} on vertices $\{1, \dotsc , n\}$, that is $\pi^-(i)  =d_i^-/M$. Let $U$ be a random variable with probability distribution $\pi^-$: we have 
$$\rho^2 = \mathbf{E}\left[ \frac{1}{d^+_U}\right].$$
Using Jensen's inequality for the convex function $x \mapsto 1/x$, we get $\mathbf{E}[d^+_U]^{-1} \leqslant \rho^2$. A direct consequence of hypothesis \eqref{H} is $\delta \leqslant \mathbf{E}[d_U^+] \leqslant \Delta$, so \eqref{H2} is fulfilled when $\mathbf{E}[d_U^+] < \delta^2$. This hypothesis can be interpreted as a concentration hypothesis in the sense that the out-degree of a $\pi^-$-distributed random vertex has an expectation not far from the minimum out-degree. 
\end{remarque}

\begin{figure}[H]\centering
  \begin{center}
    \subfloat[Case with $\tilde{\rho} = \delta^{-1}$.]{\includegraphics[scale=0.4]{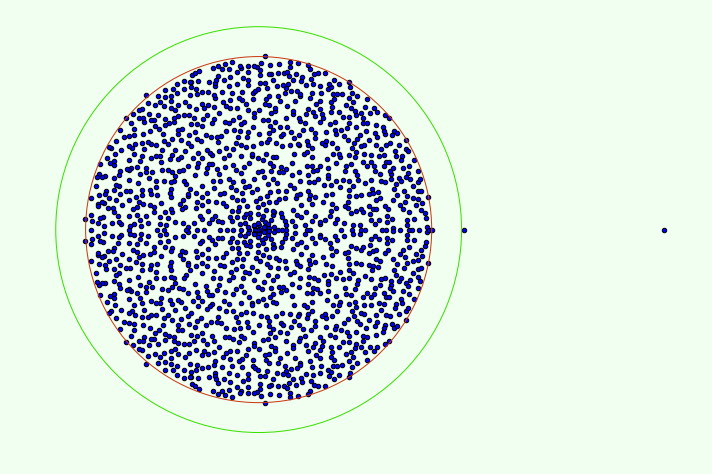}\label{sub:a}}\hfill
    \subfloat[Case with $\tilde{\rho} = \rho$.]{\includegraphics[scale=0.4]{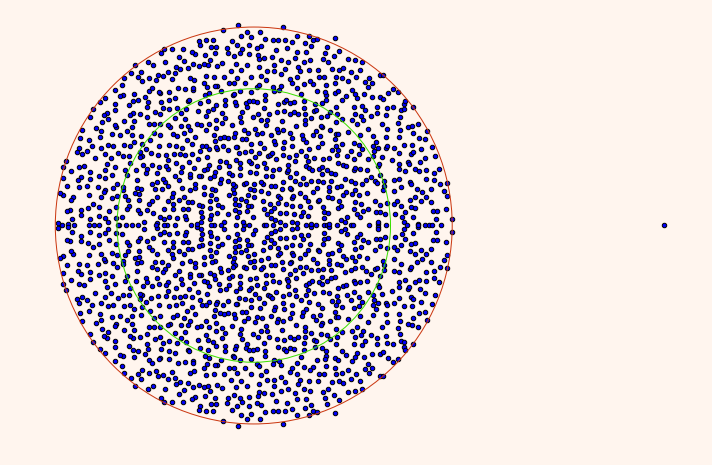}\label{sub:b}}
\caption{Two spectra of the transition matrix on a random configuration digraph. We drew in red the circle with radius $\rho$; in green, the circle with radius $\delta^{-1}$. The rightmost outlier is the Perron eigenvalue $\lambda_1=1$. \\$\bullet$ In figure \protect\subref{sub:a} there are $n=1600$ vertices: $700$ of them have type $(2,2)$ and $800$ have type $(9,9)$. In this case we have $\tilde{\rho} = \delta^{-1}=1/2$. Notice that there are very few outliers outside the circle of radius $\rho$: only one in this case. \\ $\bullet$  In figure \protect\subref{sub:b}, there are $n=1800$ vertices, $600$ of them have type $(5,6)$, $600$ of type $(3,7)$ and $600$ of type $(9,4)$. Here we have $\tilde{\rho} = \rho$.}    \label{fig:spectre}
  \end{center}
\end{figure}

\subsection{Ramanujan digraphs and the Alon conjecture.}

A $d$-regular undirected graph is said to be \emph{Ramanujan} if every eigenvalue $\lambda$ of its transition matrix has $|\lambda|=1$ or $|\lambda|\leqslant 2\sqrt{d-1}/d$. Those graphs have been very well studied, notably for their optimal expansion properties (\cite{ramanujan-book, hoory2006expander}). The reason why the value $2\sqrt{d-1}/d$ appears here is because the universal cover of every $d$-regular graph is the infinite $d$-regular tree $\mathbb{T}_d$, and its transition operator has spectrum $[-2\sqrt{d-1}/d, 2\sqrt{d-1}/d]$, a classical result of Kesten~\cite{kesten59}; Ramanujan graphs are the regular graphs whose non-trivial eigenvalues are included in the spectrum of their universal cover. 

A recent line of research generalized this to digraphs, as recently\footnote{The survey \cite{2018arXiv180408028P} appeared on the ArXiv after the first version of this paper.} surveyed in \cite{2018arXiv180408028P}: the universal cover of a $d$-regular digraph is the infinite $d$-regular tree $\vec{\mathbb{T}}_d$ obtained from the infinite $2d$-regular tree $\mathbb{T}_{2d}$ by assigning a direction for $d$ edges at every vertex and the other direction for the $d$ other edges at this vertex. The spectrum of the transition operator $\vec{\mathbb{T}}_d$ is precisely $\{z \in \mathbb{C} : |z|\leqslant 1/\sqrt{d} \}$ as proven in \cite{MR1231179}. By analogy, a $d$-regular digraph is called Ramanujan if every eigenvalue $\lambda$ of its adjacency matrix has $|\lambda|=1$ or $|\lambda|\leqslant 1/\sqrt{d}$. 

Explicit constructions of Ramanujan graphs have been a challenging problem with a rich history, but one of the most striking phenomenon in the domain is that \emph{most} regular graphs are \emph{nearly} Ramanujan. More precisely, Alon conjectured in \cite{Alon1986} that for every $d,\varepsilon$, the second eigenvalue $\lambda_2$ of the transition matrix of a uniform $d$-regular graph on $n$ vertices is smaller than $2\sqrt{d-1}/d + \varepsilon$ with high probability when $n \to \infty$. The question remained open for two decades and was solved by Friedman in his celebrated 2004 paper \cite{friedman}. In fact, the bound was optimal due to a simple inequality already shown by Alon, sometimes referred to as the Alon-Boppana inequality (\cite{NILLI1991207}). This is now called \emph{Friedman's second eigenvalue theorem}:

\begin{theorem}[\cite{friedman, bordenave2015}]Fix an integer $d >2$. For every $\varepsilon>0$, as $n\to \infty$ we have
\begin{equation}
\mathbf{P}\left( \left| |\lambda_2| - \frac{2\sqrt{d-1}}{d} \right|>\varepsilon \right) \to 0.
\end{equation}
\end{theorem}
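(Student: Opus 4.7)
The plan is to split the two-sided statement into the Alon--Boppana lower bound $|\lambda_2|\geqslant 2\sqrt{d-1}/d-\varepsilon$ and the much harder Friedman upper bound $|\lambda_2|\leqslant 2\sqrt{d-1}/d+\varepsilon$. The lower half is deterministic: in any $d$-regular graph on $n$ vertices, the number of closed non-backtracking walks of length $2k$ based at a fixed vertex is bounded below by the corresponding count in the universal cover $\mathbb{T}_d$, which grows like $c_k(2\sqrt{d-1})^{2k}$ for a subexponentially-growing $c_k$. Summing over all vertices and comparing with the trace expansion $\mathrm{tr}(A^{2k})\leqslant d^{2k}+(n-1)(d|\lambda_2|)^{2k}$ (where $A=dP$ is the adjacency matrix), one lets $k=k(n)\to\infty$ slowly enough that $d^{2k}/n\to 0$ and deduces the lower bound.

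For the upper bound I would follow Bordenave's approach~\cite{bordenave2015} and work with the non-backtracking matrix $B\in\{0,1\}^{2|E|\times 2|E|}$ on oriented edges rather than with $A$ directly. The Ihara--Bass identity
\[
\det(\lambda I - B) = (\lambda^2-1)^{|E|-|V|}\,\det\bigl(\lambda^2 I-\lambda A+(d-1)I\bigr)
\]
shows that every non-trivial eigenvalue $\mu$ of $B$ satisfies $\mu+(d-1)/\mu=\lambda$ for some eigenvalue $\lambda$ of $A$. Hence a uniform bound $|\mu|\leqslant\sqrt{d-1}+\varepsilon$ on all non-trivial eigenvalues of $B$ forces, by solving the resulting quadratic, $|\lambda_2(A)|\leqslant 2\sqrt{d-1}+O(\varepsilon)$, which is exactly the Ramanujan bound once we divide by $d$.

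To control the spectrum of $B$ away from its trivial eigenvalues $\pm(d-1),\pm 1$, the strategy is a high-moment method: estimate $\mathbf{E}\,\mathrm{tr}\bigl((B^{(\ell)})^{*}B^{(\ell)}\bigr)$ for $\ell\sim c\log n$, where $B^{(\ell)}$ is a \emph{tangle-free} truncation keeping only those $\ell$-step non-backtracking walks whose trajectory contains at most one short cycle. Each such walk on the uniform $d$-regular graph is, to leading order, distributed as a path in a Galton--Watson tree with $(d-1)$-ary offspring, and the branching number of that tree is exactly $\sqrt{d-1}$, which yields the desired bound on the tangle-free part. The technical heart of the proof, and the main obstacle, then consists in two ingredients: a deterministic geometric inequality relating the operator norm of $B$ to that of $B^{(\ell)}$, so that tangled walks can be legitimately discarded; and a delicate combinatorial enumeration of closed non-backtracking walks grouped by their topological quotient, showing that cycles and self-intersections appearing in $\ell$-neighbourhoods contribute only lower-order terms. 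Combined with the Ihara--Bass reduction, these two ingredients yield the upper bound and hence the theorem.
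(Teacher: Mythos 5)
This theorem is not proved in the paper: it is quoted from \cite{friedman} and \cite{bordenave2015}, and the body of the paper proves only its directed analogue, so the natural benchmark for your outline is Bordenave's proof, whose architecture (non-backtracking matrix, Ihara--Bass, tangle-free truncation at length $\ell\asymp\log n$, moment method) you reproduce at the level of headlines. The genuine gap is in the mechanism you propose for the upper bound. The tangle-free matrix $B^{(\ell)}$ has nonnegative entries and row sums of order $(d-1)^{\ell}$, so its leading singular value is of order $(d-1)^{\ell}$, and estimating $\mathbf{E}\,\mathrm{tr}\bigl((B^{(\ell)})^{*}B^{(\ell)}\bigr)$ cannot ``yield the desired bound'': a single trace only gives $\sigma_2\leqslant\bigl(nd(d-1)^{\ell}\bigr)^{1/2}$, i.e.\ $\sqrt{d-1}$ times a constant $e^{\Theta(1/c)}$ after taking $\ell$-th roots, while the high moments $\mathbf{E}\,\mathrm{tr}\bigl((X^{*}X)^{m}\bigr)$ with $m\asymp\log n/\log\log n$ (which are what one actually needs to absorb the polynomial prefactors) are dominated by the Perron direction, of size $(d-1)^{2\ell m}$, and then say nothing about the second eigenvalue. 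The indispensable missing step is the removal of the Perron contribution \emph{before} the trace method: center the entries and decompose $B^{\ell}$, via a telescoping identity, into a centered tangle-free part, low-rank terms built on the Perron eigenvector, and a tangled remainder, and then bound $\lambda_2$ by a variational/perturbation argument restricted to vectors orthogonal to that eigenvector. This is Lemma 3 and the matrices $\underline{\Delta}^{(\ell)}$, $R^{(\ell)}_k$ in \cite{bordenave2015}, and it is exactly the role played here by $\underline{P}^{(t)}$, $R^{t,\ell}$ and Proposition \ref{local_eigvals}; announcing that one will ``control the spectrum of $B$ away from its trivial eigenvalues'' names the goal but supplies no mechanism for it.

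Two smaller points. In the Alon--Boppana half, the universal cover $\mathbb{T}_d$ is a tree and therefore contains \emph{no} nontrivial closed non-backtracking walks, so the count you compare with is zero as stated; the standard argument lower-bounds $\mathrm{tr}(A^{2k})$ by $n$ times the number of \emph{all} closed walks of length $2k$ at the root of $\mathbb{T}_d$, which is $(2\sqrt{d-1})^{2k}$ up to factors polynomial in $k$, and with that replacement your comparison with $\mathrm{tr}(A^{2k})\leqslant d^{2k}+(n-1)\bigl(d|\lambda_2|\bigr)^{2k}$ and the choice $d^{2k}=o(n)$ does give the lower half. Finally, the local model for a uniform $d$-regular graph is the deterministic $(d-1)$-ary tree rather than a Galton--Watson tree; this is cosmetic, but it signals that the ``delicate combinatorial enumeration'' you defer is precisely where the work lies, and it is not sketched.
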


This solved the first-order asymptotic behaviour of the second eigenvalue for regular graphs; we refer the reader to the introductions of \cite{Alon1986,bordenave2015,ramanujan-book,hoory2006expander} for further reference. When it comes to regular digraphs, our main theorem settles the Alon conjecture for \emph{digraphs} (see \cite[section 5.5]{2018arXiv180408028P}). In fact, in a $d$-regular digraph, we have $d_i^+ = d_i^- = d$, hence $\tilde{\rho}$ is equal to $\frac{1}{d} \vee \frac{1}{\sqrt{d}} = \frac{1}{\sqrt{d}}$. We state this as a corollary. 

\begin{corol}\label{corol:friedman}
Let $d\geqslant 2$ be a fixed integer and $P$ be the transition matrix of a random $d$-regular digraph. Note $|\lambda_n|\leqslant \dotsb \leqslant |\lambda_2|\leqslant \lambda_1=1$ the eigenvalues of $P$, ordered by decreasing modulus. Fix $\varepsilon>0$. Then, as $n$ goes to infinity, the following holds with high probability: 
\begin{equation}
|\lambda_2| \leqslant \frac{1}{\sqrt{d}}+\varepsilon. 
\end{equation}
\end{corol}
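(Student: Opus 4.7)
The plan is to derive the corollary as a direct specialization of Theorem \ref{mainthm} to the constant degree sequence $d_i^+ = d_i^- = d$. The first step is to verify that hypothesis \eqref{H} is satisfied, which is immediate with $\delta = \Delta = d$ since $d \geq 2$ by assumption, so Theorem \ref{mainthm} applies.

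The second step is to compute $\tilde{\rho} = \rho \vee \delta^{-1}$ for this degree sequence (a computation already sketched in the paragraph preceding the statement). Since $M = nd$, definition \eqref{defin:rho} gives
\begin{equation*}
\rho^2 = \frac{1}{nd}\sum_{i=1}^n \frac{d}{d} = \frac{1}{d},
\end{equation*}
so $\rho = d^{-1/2}$, while $\delta^{-1} = d^{-1}$. Since $d \geq 2$ implies $d^{-1} \leq d^{-1/2}$, one obtains $\tilde{\rho} = d^{-1/2}$.

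The final step is to invoke \eqref{claim} from Theorem \ref{mainthm}, which for any fixed $\varepsilon > 0$ yields $\mathbf{P}(|\lambda_2| > d^{-1/2} + \varepsilon) \to 0$ as $n \to \infty$. This is precisely the claim of the corollary. There is no substantial obstacle here, since all the mathematical content is packed into the proof of Theorem \ref{mainthm}; the corollary is essentially bookkeeping. The only point to double-check is the edge case $d = 2$, where $\rho = 1/\sqrt{2} > 1/2 = \delta^{-1}$, confirming that $\tilde{\rho} = d^{-1/2}$ holds across the whole range of $d$ permitted by \eqref{H}.
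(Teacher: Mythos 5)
Your proposal is correct and follows exactly the paper's own route: the paper derives the corollary by specializing Theorem \ref{mainthm} to $d_i^+=d_i^-=d$, noting that $\tilde{\rho}=\frac{1}{d}\vee\frac{1}{\sqrt{d}}=\frac{1}{\sqrt{d}}$. Your verification of \eqref{H} and the computation of $\rho$ via $M=nd$ are the same bookkeeping the paper performs in the sentence preceding the statement.
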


 \subsection{Motivation, background and related work.}

 \subsubsection*{Random digraphs.}

In this paper, we consider random directed (multi)graphs with a specified sequence of in-degrees and out-degrees; when all the degrees are equal to $d$, this model reduces to the directed $d$-regular case. Our construction with half-edges is a directed variant of the classical configuration model (see \cite{bollobas-rg}). When the degrees are bounded independently of the size of the graph, such multigraphs are sparse, meaning they have few edges. Even if digraphs are much more difficult to handle than undirected graphs, they are also one step closer to reality when modelling real-life situtations: see \cite{newman,cooper2011random} and references for (many) examples of graph-modelling that go beyond the Internet graph. 

\subsubsection*{Eigenvalues of Markov chains.}
 
 Many strong connections exist between the second eigenvalue of a transition matrix and the convergence properties of the corresponding Markov chain. The following proposition is the most known result:
 
 \begin{prop}[\cite{peres}, \cite{montenegro}]\label{prop:MCCV}Let $P$ be the transition matrix of an irreducible, aperiodic Markov chain on the finite state space $S=\{1, \dotsc, n\}$ with stationary distribution $\pi_\star$. Let $1 = |\lambda_1|\geqslant |\lambda_2| \geqslant \dotsb \geqslant |\lambda_n|$ be the eigenvalues of $P$ ordered by decreasing modulus and $d(n)$ be the \emph{distance to equilibrium} at time $n$, defined as $d(n) = \max_{x \in S}\Vert P^n(x, \cdot) - \pi_\star \Vert_\mathrm{TV}$, with $\Vert \cdot\Vert_{\mathrm{TV}}$ the usual total variation distance. Then, 
\begin{equation}
\lim_{n \to \infty} d(n)^\frac{1}{n} = |\lambda_2|.
\end{equation}
\end{prop}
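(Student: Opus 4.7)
The plan is to prove the two matching inequalities $\limsup_n d(n)^{1/n} \leq |\lambda_2|$ and $\liminf_n d(n)^{1/n} \geq |\lambda_2|$ separately, via spectral arguments that do not rely on reversibility.

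For the upper bound, the key input is Perron--Frobenius: since $P$ is irreducible and aperiodic, $1$ is a simple eigenvalue of $P$ and the unique eigenvalue of modulus $1$. Set $\Pi := \mathbf{1}\pi_\star^T$; a direct computation using $P\mathbf{1}=\mathbf{1}$ and $\pi_\star^T P = \pi_\star^T$ gives $P\Pi = \Pi P = \Pi^2 = \Pi$, so $N := P - \Pi$ satisfies $N^k = P^k - \Pi$ for every $k \geqslant 1$ and has spectrum $\{0,\lambda_2,\dotsc,\lambda_n\}$. Putting $N$ in Jordan normal form yields, for every $\varepsilon>0$, a constant $C(\varepsilon)$ such that $\|N^n\|_{\infty\to\infty} \leqslant C(\varepsilon)(|\lambda_2|+\varepsilon)^n$; the polynomial-in-$n$ prefactor coming from the largest Jordan block of $\lambda_2$ is absorbed into the $\varepsilon$-slack. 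Since the $x$-th row of $N^n$ is exactly $P^n(x,\cdot)-\pi_\star$, one has $\|N^n\|_{\infty\to\infty} = \max_x \|P^n(x,\cdot)-\pi_\star\|_1 = 2 d(n)$, so taking $n$-th roots and letting $\varepsilon \to 0$ proves the upper bound.

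For the lower bound, pick a left eigenvector $v \in \mathbb{C}^n$ of $P$ for $\lambda_2$, so $v^T P = \lambda_2 v^T$. Applying this identity to $\mathbf{1}$ yields $\lambda_2 (v^T\mathbf{1}) = v^T P\mathbf{1} = v^T \mathbf{1}$, and since $\lambda_2 \neq 1$ this forces $\sum_x v_x = 0$. Iterating gives
\[
\lambda_2^n v^T \;=\; \sum_x v_x\, P^n(x,\cdot) \;=\; \sum_x v_x\,\bigl(P^n(x,\cdot) - \pi_\star\bigr),
\]
and evaluating at a coordinate $y^\star$ maximising $|v_{y}|$,
\[
|\lambda_2|^n \|v\|_\infty \;=\; \bigl|\lambda_2^n v_{y^\star}\bigr| \;\leqslant\; \sum_x |v_x| \bigl|P^n(x,y^\star)-\pi_\star(y^\star)\bigr| \;\leqslant\; 2\|v\|_1\, d(n).
\]
Taking $n$-th roots and letting $n\to\infty$ gives $\liminf d(n)^{1/n} \geqslant |\lambda_2|$, which combined with the upper bound proves the claim. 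Note that this step is insensitive to whether $\lambda_2$ is real or complex, since $\|\lambda_2^n v\|_\infty = |\lambda_2|^n\|v\|_\infty$ regardless.

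The main technical point to manage carefully is the polynomial-in-$n$ prefactor coming from potentially defective Jordan blocks of $\lambda_2$ in the upper bound: it does not affect the exponential rate but must be tracked through the $\varepsilon$-slack rather than swept under the rug. A second, smaller subtlety is the choice of ambient norm: one must use $\|A\|_{\infty\to\infty} = \max_x \sum_y |A_{xy}|$, which is precisely twice the worst-row total variation distance, rather than the spectral norm. The argument is otherwise entirely elementary and self-contained, and crucially it does not require reversibility of $P$ — matching the non-reversible setting of the rest of the paper.
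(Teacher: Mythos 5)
Your proof is correct. Note that the paper does not actually prove Proposition \ref{prop:MCCV}: it is stated as background and attributed to \cite{peres} and \cite{montenegro}, so there is no in-paper argument to compare against. Your two-sided argument is essentially the standard one found in those references: the lower bound via an eigenvector orthogonal to the constants (the textbooks use a right eigenfunction $f$ with $Pf=\lambda_2 f$ and $\pi_\star(f)=0$, you use a left eigenvector with $\sum_x v_x=0$ --- the two are interchangeable), and the upper bound via the spectral radius of $P-\Pi$ with $\Pi=\mathbf{1}\pi_\star^\top$, Jordan form supplying the $(|\lambda_2|+\varepsilon)^n$ decay. The one step you assert rather than justify is that the spectrum of $N=P-\Pi$ is $\{0,\lambda_2,\dotsc,\lambda_n\}$; this is where simplicity of the eigenvalue $1$ (Perron--Frobenius) is genuinely used, and it deserves a line: from $\Pi P=\Pi$ one gets $\Pi(P-\lambda\mathrm{I})^k=(1-\lambda)^k\Pi$, so every generalized eigenvector of $P$ for $\lambda\neq 1$ lies in $\ker\Pi$, where $N$ coincides with $P$, while on $\mathrm{ran}\,\Pi=\mathrm{span}(\mathbf{1})$ one has $N=0$; since the generalized eigenspace of $1$ is exactly $\mathrm{span}(\mathbf{1})$, the claimed spectrum (and the absence of a surviving Jordan block at $1$) follows. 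With that line added, the argument is complete, elementary, and, as you point out, entirely free of reversibility assumptions.
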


In other words, large values of the \emph{spectral gap} $\gamma_\star :=1 - |\lambda_2|$ are linked with fast convergence. For random walks on graphs, $\lambda_2$ is also known to be strongly linked with expansion properties of the underlying graph (see~\cite{hoory2006expander} for an excellent survey). It is thus of special interest to study the spectrum of transition matrices; however, instead of focusing on a fixed chain $P$, researchers now study ``generic" models of transition matrices. Most of the time, the transition matrix is chosen at random among a certain type of matrices and its properties are studied in a probabilistic setting. In this line, random walks on random graphs have attracted an extraordinary attention during the last decades.  

Another very important aspect of Markov chains linked with $|\lambda_2|$ is \emph{mixing}, and especially the cutoff phenomenon (\cite{diaconis1996cutoff, peres}). Proving cutoffs for large classes of random walks is an active line of research. In the context of random graphs, cutoff had been proven with high probability in the $d$-regular model (\cite{cutoff_dreg}),  but it was recently shown by Lubetzky and Peres in their influential paper \cite{cutoff_ramanujan} that \emph{every} Ramanujan graph exhibits cutoff, suggesting that optimality of the second eigenvalue is linked with optimal mixing. Our paper gives the first upper bound for the second eigenvalue for a non-reversible model of Markov chains. The cutoff phenomenon for our model has been established whp in the inspiring paper \cite{caputo_salez_bordenave}, with a logarithmic mixing time (see Theorems 1 and 2 in \cite{caputo_salez_bordenave}).   Note that our main result (Theorem \ref{mainthm}) immediately implies Theorem 3 in \cite{caputo_salez_bordenave}, as a consequence of Proposition \ref{prop:MCCV}.

\subsubsection*{Random transition matrices.}

While we are interested in the spectral gap of a special kind of those matrices, some serious advances on global asymptotics of the spectrum have recently been made. In a series of papers \cite{cook-srrd,cook_rrd,cook_dreg2}, Nicholas Cook and coauthors established convergence towards the circular law of the empirical spectrum of matrices related to the adjacency matrix of $d$-regular directed graphs, when $d$ grows to infinity with $n$. In another series of papers (\cite{BCC_rev, BCC_revex,BCC,BCCP}), Bordenave, Caputo and Chafaï considered the spectra of a transition matrix $P$ constructed by row-normalizing a random matrix with nonnegative iid entries $X_{i,j}$, that is $P(i,j) := X_{i,j} \rho(i)^{-1}$ where $\rho(i):=X_{i,1}+...+X_{i,n}$. 
A key result is formulated in \cite{BCCP} where the authors prove the convergence towards the circular law in the sparse case where the $X_{i,j}$ are heavy-tailed with index $\alpha \in ]0,1[$. They also conjecture (\cite{BCCP} remark 1.3) that in this case, whp the second eigenvalue $|\lambda_2|$  will be smaller than $\sqrt{1-\alpha}$. We believe that our method could be adapted to tackle this conjecture.

\subsubsection*{Non-reversible chains.} A key feature of random walks on random unoriented graphs is \emph{reversibility} of the Markov chain. When the walk is reversible, the transition matrix $P$ has a known stationary distribution $\pi_\star$ and is self-adjoint relatively to the hilbert product $\langle \cdot, \cdot\rangle_\star$ defined by 
\[
\langle x, y \rangle_\star = \sum_{x \in V} x_i y_i \pi_\star(i) \qquad (x,y \in \mathbb{R}^n ).
\]

In this reversible case, all the classical tools from hermitian algebra can be used to study the spectrum of $P$. When $P$ is not reversible but when its stationary distribution $\pi_\star$ is known, we can still use the \emph{reversibilization trick} introduced by Fill (\cite{fill1991eigenvalue}; see also \cite{montenegro}): if $P^*$ denotes the time-reversibilization of $P$, defined as $P^*(i,j) = P(j,i)\pi_\star(j) \pi_\star(i)^{-1}$, then $PP^*$ is self-adjoint for $\langle \cdot, \cdot \rangle_\star$. All the eigenvalues $1=\mu_1 \geqslant \mu_2 \geqslant ... \geqslant \mu_n \geqslant 0$ of $PP^*$ are real and positive, and $\mu_2 \geqslant |\lambda_2|^2$, thus giving informations about $|\lambda_2|$. However, in any model where $\pi_\star$ is not explicitly known, those techniques are useless. 

Our method is the first one to efficiently deal with the top eigenvalue of non-hermitian matrices with no information on the eigenvectors; we strongly believe this method could prove extremely useful in other problems within the random matrix theory, especially in the non-hermitian setting. 

In fact, after the first version of this paper was put on the ArXiv, other results on the spectral gap of random matrix models have been proven with this method, such as the spectral gap for random biregular bipartite graphs \cite{dumitriu_bireg}, and for sparse bistochastic matrices \cite{bordenave_qiu}.

\bigskip

We finally mention some related questions and conjectures. 
\begin{enumerate}
\item What is the link between $|\lambda_2|$ and the cutoff phenomenon for the Markov chain ? Do \emph{all} graphs in our model having $|\lambda_2|\leqslant \rho$ exibit cutoff ?
\item Is the upper bound \eqref{claim} optimal ? In the Friedman theorem, the difficult part was to prove the upper bound while the lower bound had been proven very early (\cite{NILLI1991207}) using the full strength of the symmetric nature of $P$. We have proven an upper bound for our model, but no lower bound is known yet. 
\item This paper deals with the second eigenvalue of random digraphs in general. In the specific case of $d$-regular digraphs, it is conjectured in \cite[Section 7]{bordenave_chafai_survey} that the whole empirical spectral measure of the adjacency matrix of a $d$-regular digraph converges almost surely in distribution to $\mu_{\textsc{OKMC}}$, a complex version of the Kesten-McKay distribution, namely
\[
\mu_{\textsc{OKMC}}(\mathrm{d}z) = \pi^{-1} \frac{d^2(d-1)}{(d^2 - |z|^2)^2}\mathbf{1}_{|z|\leqslant \sqrt{d}} \mathrm{d}z.
\]
\end{enumerate}

\subsection{Conventions and notations.}\label{nota:deg}
The operator norm of a real square matrix $A \in \mathcal{M}_n(\mathbb{R})$ is  $$\Vert A\Vert  = \sup_{x \neq 0} \frac{\Vert Ax\Vert }{\Vert x\Vert }$$
where $\Vert x \Vert = (x_1^2+ \dotsb +x_n^2)^\frac{1}{2}$ is the standard euclidean norm. If $M$ is any matrix, $A^\top$ is its usual transpose. We will also note $\mathbf{1}$ the column vector $\mathbf{1} = (1,\dotsc ,1)^\top$.
If $(a_n)$ and $(b_n)$ are two real sequences, we use the classical Landau notations $a_n \sim b_n, a_n = o(b_n)$ and $a_n = O(b_n)$. 

\bigskip

We will also adopt the following notations for half-edges in our model. Formally, a half-edge will be coded by a triple $(u,i,\varepsilon)$, where 
\begin{itemize}
\item $u$ is a vertex, 
\item $\varepsilon \in \{-, +\}$ is a sign indicating the nature of the half-edge: a $+$ symbol denotes a head, a $-$ denotes a tail, 
\item $i$ is an integer in $\{1, \dotsc,  d_u^\varepsilon \}$. 
\end{itemize}
With this notation, we have $E^+(u) = \{(u,i,+): i = 1, \dotsc,  d^+_u) \}$ and also $E^-(u) = \{(u,i,-): i = 1, \dotsc, d^-_u) \}$. These notations will specifically be used in the combinatorial section \ref{sec:combi}. In general, it will be more convenient to adopt the following conventions, much easier to read: heads will be denoted by the bold letter $\mathbf{e}$ and tails will be denoted by the bold letter $\mathbf{f}$. If a half-edge $\mathbf{e}$ is attached to vertex $u$, we will write $d_\mathbf{e}^\pm$ instead of $d_u^\pm$.

For example, a 2-step path in the graph between vertices $a$ and $b$ is a sequence of the form $(\mathbf{e}_1, \mathbf{f}_1, \mathbf{e}_2, \mathbf{f}_2 )$ with $\mathbf{e}_1$ attached to $a$, $\mathbf{f}_2$ attached to $b$, $\mathbf{e}_2$ and $\mathbf{f}_1$ attached to the same vertex and $\sigma(\mathbf{e}_1) = \mathbf{f}_1, \sigma(\mathbf{e}_2) = \mathbf{f}_2$. We will give a complete and precise definition of \emph{paths} further in the paper.

In the rest of the paper, we will denote all universal constants by $C>0$. 

\subsection{Acknowledgement.}

The author is grateful to his advisors Charles Bordenave and Justin Salez for their valuable help and advice during the writing of this paper, from preliminary discussions about the problem and the understanding of \cite{bordenave2015} to the final remarks on the manuscript. 

\section{Proof of the main theorem.}

\subsection{Outline.}We give a motivated sketch of the main difficulties in the proof of our theorem and the core ideas to overcome them.

As mentionned in the beginning of \cite{friedman} or \cite{bordenave2015}, the standard trace method for bounding $|\lambda_2|$ is doomed to fail: the main obstruction comes from the fact that with small probability, some very small graphs with many cycles (``tangled graphs") are present in the graph, and they drastically perturb the expectation of the trace of $P^t$. To tackle the problem, a powerful idea is to use a \emph{selective trace}. 

Recall that the coefficient $(i,j)$ of $P^t$ is the sum over all paths of length $t$ from $i$ to $j$ of the probability that the  simple random walk follows this path. Instead of taking all those paths, we are going to select only those that are not ``too much tangled" and replace the matrix $P^t$ with a ``tangle-free" matrix $P^{(t)}$ --- all proper definitions will be stated in Section \ref{sec:defintangle} --- and use the fact that with high probability, when $t$ is not too large, there are no tangles in the original graph (Proposition \ref{lemme_pas_de_noeuds}). This idea was introduced in \cite{friedman} for the proof of the Friedman theorem and was refined in \cite{bordenave2015} and \cite{bordenave_lelarge_massoulie}. 

In the models studied in these papers, it was easier to study paths that are non-backtracking, i.e. that do not take the same edge twice in a row. In our own model of directed graphs, no edge can be crossed twice in a row except self-loops --- which are rare --- hence we can concentrate on the transition matrix $P^t$ or its tangle-free analog $P^{(t)}$ instead of resorting to non-backtracking matrices. 

The next step will be to relate the second eigenvalue of $P^{(t)}$ with the matrix norm of different other related matrices, namely $\underline{P}^{(t)}$ and $R^{t,\ell}$, defined in \ref{sec:tangled_remainders}. Those matrices are easier to study, because their components are nearly centered. Their norms are given in Propositions \ref{control1} and \ref{control2}. 

The key difficulty of our model, compared to the regular case studied in \cite{bordenave2015}, lies in the fact that the stationary distribution is unknown. In the regular case, the stationary distribution --- i.e., the top left-eigenvector --- is known to be $(1/n, \dotsc,  1/n)$, which could be used in Lemma 3 of \cite{bordenave2015} for deriving a Courant-Fisher-like variational formulation of $|\lambda_2|$. This is no longer the case here and we had to perform different algebraic manipulations and to approximate the stationary distribution; this will be done in the proof of Proposition \ref{local_eigvals} (Section \ref{section:prop_loc_eigvals}).

\subsection{Definitions: tangles and variants of $P$.}\label{sec:defintangle}

This subsection introduces the main tools for our proof of Theorem \ref{mainthm}.

\subsubsection{Paths.}

Even though the graph $G$ is a multigraph, its construction with half-edges described in Section \ref{section_modele} is extremely useful and will be of paramount importance in the paper. This is why we do not define paths as a usual path in a graph (or multigraph), but as a sequence of half-edges that could be paired through $\sigma$. Through all the sequel, $t>0$ is an integer.

\begin{defin}\label{def:path} A path of length $t$ between vertices $i$ and $j$ is a sequence of half-edges $(\mathbf{e}_1, \mathbf{f}_1, \dotsc, \mathbf{e}_t, \mathbf{f}_t)$ such that 

\begin{enumerate}\item for every $s \leqslant t$, $\mathbf{e}_s$ is a head and $\mathbf{f}_s$ is a tail,
\item for every $s<t$, $\mathbf{f}_s$ and $\mathbf{e}_{s+1}$ are attached to the same vertex, 
\item $\mathbf{e}_1$ is attached to $i$ and $\mathbf{f}_t$ is attached to $j$.
\end{enumerate}

We note $\mathscr{P}^t(i,j)$ the set of paths of length $t$ connecting $i$ to $j$. Usually, we will denote paths by the bold letter $\mathbf{p}$, meaning $\mathbf{p} = (\mathbf{e}_1, \mathbf{f}_1, \dotsc, \mathbf{e}_t, \mathbf{f}_t)$.

\end{defin}

Keep in mind that our definition of a \emph{path} does not depend on $\sigma$ or $G$: it is a \emph{potential} path in $G$. The path itself is a purely combinatorial object and is not random; it will become a true path in the random graph $G$ if in addition, $\sigma(\mathbf{e}_s) = \mathbf{f}_s$ for every $s \in \{1, \dotsc, t\}$. In this setting we have the following useful expression for powers of the matrix $P$: 
\begin{equation}P^t(i,j) = \sum_{\mathbf{p} \in \mathscr{P}^t(i,j)} \prod_{s=1}^t \frac{ \mathbf{1}_{\sigma(\mathbf{e}_s) = \mathbf{f}_s}}{d_{\mathbf{e}_s}^+} \end{equation}

where $d_{\mathbf{e}}^+$ is in fact $d^+_{u}$ if the half-edge $\mathbf{e}$ is attached to the vertex $u$ (see notation \ref{nota:deg}). When $t=1$, this expression reduces to 
$$P(i,j) = \sum_{\mathbf{e}\in E^+(i)}\sum_{\mathbf{f} \in E^-(j)} \frac{\mathbf{1}_{\sigma(\mathbf{e}) = \mathbf{f}}}{d_i^+}.$$

Taking expectations on both sides yelds the following identity: 
\begin{equation}
\label{exp_P}
\mathbf{E}[P(i,j)] = \frac{d_j^-}{M} := \pi^- (j).
\end{equation}

The probability distribution $\pi^-$ is also called the \emph{out-degree distribution}.

\subsubsection{Tangles and cycles.}In an oriented multigraph, we say that two vertices $u$ and $v$ are adjacent if there is an edge between them, regardless of its orientation. A \emph{cycle} is a sequence of vertices $(x_1, \dotsc, x_n)$ such that for every $i\neq n$, $x_i$ and $x_{i+1}$ are adjacent and $x_n$ is adjacent with $x_1$. Loops and multi-edges count as cycles. 

If $G$ is an oriented multigraph and $x,y$ are two vertices, a digraph-path from $x$ to $y$ is a sequence $(x_1, \dotsc, x_n)$ such that $x_1=x, x_n=y$, and for every $i$ the vertex $x_i$ leads to the vertex $x_{i+1}$. Its length is $n-1$. We denote by $d(x,y)$ the length of the shortest digraph-path from $x$ to $y$.  Let $x$ be a vertex and $r$ a positive integer. The forward ball of center $x$ and radius $r$, noted $B^+(x,r)$, is the oriented multigraph induced by $G$ on the vertices $y$ such that $d(x,y) \leqslant r$. 

\bigskip 

We now give our first definition of \emph{tangles}, in the context of digraphs: 
\begin{itemize}
\item Let $G$ be an oriented multigraph. We say that it is \textbf{tangled} if it has at least two cycles. If $G$ is not tangled, it is tangle-free. 
\item Let $d$ be a positive integer. If, for every vertex $x$, the oriented multigraph $B^+(x,d)$ is tangle-free, we say that $G$ is \textbf{$\mathbf{d}$-tangle free}. Otherwise, it is $d$-tangled. 
\end{itemize}

\begin{figure}[H]\centering

\begin{tabular}{cc}
\includegraphics[scale=1.3]{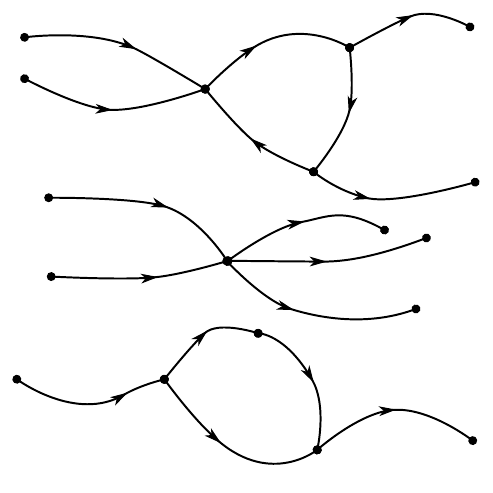}  &\includegraphics[scale=1.3]{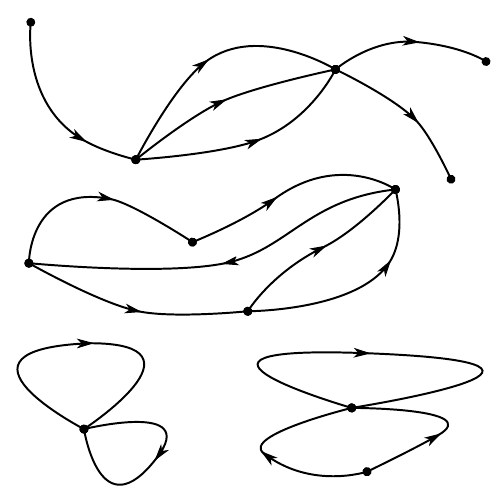}  \\
Some examples of tangle-free digraphs. & Some tangled digraphs.
\end{tabular}
\caption{Examples.}
\end{figure}

We now extend this to \emph{paths}, as defined in definition \ref{def:path}. Fix a path $\mathbf{p}$. It induces an oriented multigraph $G(\mathbf{p})$ with the following construction: 
\begin{itemize}
\item the vertices of $G(\mathbf{p})$ are the vertices having an half-edge appearing in $\mathbf{p}$, 
\item the number of edges going from vertex $x$ to vertex $y$ is the number of \emph{distinct} couples $(\mathbf{e}, \mathbf{f})$ appearing in $\mathbf{p}$, such that $\mathbf{e}$ is a head attached to $x$ and $\mathbf{f}$ is a tail attached to $y$.
\end{itemize}

If $(\mathbf{e}, \mathbf{f})$ appears more than once in the path $\mathbf{p}$, then it will only account for one edge in $G(\mathbf{p})$. The definition of tangles naturally extends to paths $\mathbf{p}$: 

\begin{defin}[tangle-free paths]Let $\mathbf{p}$ be a path. It is tangle-free if $G(\mathbf{p})$ is tangle free. The set of all paths of length $t$ going from $i$ to $j$ that are tangle-free will be noted $\mathscr{T}^t(i,j)$. 
\end{defin}

Note that a path $\mathbf{p}$ can be tangle-free and have a cycle crossed many times. For example, fix a head $\mathbf{e}$ and a tail $\mathbf{f}$ attached to the same vertex $x$. Define the path 
$$\mathbf{p} = (\mathbf{e}, \mathbf{f}, \mathbf{e}, \mathbf{f}, \mathbf{e}, \mathbf{f}).$$
The corresponding graph $G(\mathbf{p})$ is the simple loop based at $x$, which has only one cycle, thus $\mathbf{p}$ is tangle-free. However, the loop is explored three times by the path $\mathbf{p}$. 

Now take another tail attached to $x$, say $\mathbf{f}'$. Consider the path 
$$\mathbf{q} = (\mathbf{e}, \mathbf{f}, \mathbf{e}, \mathbf{f}').$$
Then $G(\mathbf{q})$ is simply the multigraph with one vertex and two distinct loops based at $x$, thus $\mathbf{q}$ is tangled.

\subsubsection{Variants of $P$.}

We now define: 

\begin{itemize}
\item the \textbf{centered} analogue of $P^t$, which is $\underline{P}^t$ defined by 
\begin{equation}\label{def:CP}\underline{P}^t(i,j) = \sum_{\mathbf{p} \in \mathscr{P}^t(i,j)} \prod_{s=1}^t \frac{ \mathbf{1}_{\sigma(\mathbf{e}_s) = \mathbf{f}_s} - 1/M}{d_{\mathbf{e}_s}^+} .\end{equation}

Using \eqref{exp_P}, we see that the matrix $\underline{P}^1$ is centered. This is not true for $\underline{P}^t$, but an important step in this work will be to prove that $\underline{P}^t$ is \emph{nearly} centered. 

\item the  \textbf{tangle-free}   analogue of $P$, defined by
\begin{equation}\label{def:TFP}P^{(t)}(i,j) = \sum_{\mathbf{p} \in \mathscr{T}^t(i,j)} \prod_{s=1}^t \frac{ \mathbf{1}_{\sigma(\mathbf{e}_s) = \mathbf{f}_s} }{d_{\mathbf{e}_s}^+}.\end{equation}

Here, we just got rid of all the tangled paths. When the underlying graph is $t$-tangle free, we obviously have $P^t = P^{(t)}$.

\item and finally the  \textbf{centered tangle-free}   analogue of $P$, defined by
\begin{equation}\label{def:CTFP}\underline{P}^{(t)}(i,j) = \sum_{\mathbf{p} \in \mathscr{T}^t(i,j)} \prod_{s=1}^t \frac{ \mathbf{1}_{\sigma(\mathbf{e}_s) = \mathbf{f}_s} - 1/M}{d_{\mathbf{e}_s}^+}. \end{equation}
\end{itemize}

The matrix $\underline{P}^{(t)}$ is the main tool of the forthcoming analysis, because it is ``nearly centered" and the sum runs over tangle-free paths. A key step in this paper will be to check if the perturbation $P^t - \underline{P}^{(t)}$ is small: to this end, first remark that the sparsity of the graph $G$ implies that tangles are not frequent if we choose the right scale for the path length $t$:

\begin{prop}\label{lemme_pas_de_noeuds}
Let $G$ be the random graph associated with the degree sequence $(d_i^+, d_i^-)$ satisfying  hypothesis \eqref{H}. Define $t = \lceil \alpha \log_\Delta (n) \rceil$. Then, as $n$ goes to infinity, we have 

\begin{equation}
\lim_{n \to \infty} \mathbf{P}(G \text{ is }t\text{-tangled }) = 0.
\end{equation}
\end{prop}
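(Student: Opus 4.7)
The plan is to run the standard first moment / breadth-first-search argument: bound by a union over vertices the probability that a single forward ball contains two independent cycles, and reveal the environment $\sigma$ edge-by-edge via the principle of deferred decisions.

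First, I would write
\[
\mathbf{P}(G \text{ is } t\text{-tangled}) \;\leqslant\; \sum_{x \in V} \mathbf{P}\bigl(B^+(x,t)\text{ contains at least two cycles}\bigr),
\]
and fix one vertex $x$. I then explore $B^+(x,t)$ by a breadth-first search: start from $x$, and at each step pick an unexplored head $\mathbf{e}$ attached to a currently-explored vertex at depth strictly less than $t$, and reveal the random tail $\sigma(\mathbf{e})$. Using hypothesis \eqref{H}, the number of vertices discovered is at most $1+\Delta+\dotsb+\Delta^t \leqslant 2\Delta^t$, so the total number of head-revealings needed to determine $B^+(x,t)$ together with all its internal edges is at most $N := 2\Delta^{t+1}$.

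The key observation is that a cycle in $B^+(x,t)$ is created, during the BFS, precisely when a revealed tail $\sigma(\mathbf{e})$ is attached to a vertex that has already been discovered (this includes loops and multi-edges, matching the paper's convention). By deferred decisions, given the history up to step $k$, the half-edge $\sigma(\mathbf{e})$ is uniform over the $M-k$ unmatched tails. The number of such tails attached to already-discovered vertices is at most $\Delta \cdot 2\Delta^t = 2\Delta^{t+1}$, hence the conditional probability that step $k$ produces a cycle-closing edge is bounded by $4\Delta^{t+1}/M$ uniformly in $k$, once $n$ is large enough so that $M - N \geqslant M/2$.

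Letting $Z$ be the total number of cycle-closing edges exposed during the BFS, the number of cycles in $B^+(x,t)$ equals $Z$ (each back-edge produces exactly one new independent cycle). I then bound
\[
\mathbf{P}(Z \geqslant 2) \;\leqslant\; \binom{N}{2}\left(\frac{4\Delta^{t+1}}{M}\right)^{2} \;\leqslant\; C\,\frac{\Delta^{4t+4}}{M^{2}},
\]
by summing the conditional probability that a specified pair of steps both close cycles over all $\binom{N}{2}$ ordered-then-unordered pairs of steps; the bound of $4\Delta^{t+1}/M$ survives conditioning because the bound on the explored set only grows. Taking the union bound over $x$ and using $M \geqslant \delta n \geqslant 2n$, I get
\[
\mathbf{P}(G\text{ is }t\text{-tangled}) \;\leqslant\; n \cdot C\,\frac{\Delta^{4t+4}}{M^{2}} \;\leqslant\; C' \,\frac{\Delta^{4t}}{n}.
\]
With $t = \lceil \alpha \log_\Delta n \rceil$ this is $O(n^{4\alpha - 1})$, which vanishes provided $\alpha < 1/4$ (and presumably $\alpha$ has been chosen that small elsewhere in the paper). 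The main subtlety, rather than an obstacle, is the bookkeeping: one must check that the BFS does expose every edge internal to $B^+(x,t)$ and that "two independent cycles in the induced subgraph" is indeed witnessed by two distinct back-edges of the BFS, regardless of whether those edges are loops, multi-edges, or long cycle-closers. Everything else is a routine union bound.
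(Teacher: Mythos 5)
Your proof is correct and is essentially the argument the paper itself invokes: the paper gives no proof of Proposition \ref{lemme_pas_de_noeuds} beyond citing the classical breadth-first-search exploration argument of Section 3.2 of \cite{caputo_salez_bordenave}, and your union bound over vertices, deferred-decision revealing of $\sigma$, and the resulting $O(n^{4\alpha-1})$ bound for $\alpha<1/4$ (the paper does fix $\alpha<1/4$ immediately after the proposition) reproduce exactly that computation. The only point to tighten is the bookkeeping you already flag: since $B^+(x,t)$ is the subgraph \emph{induced} on the ball, you must also reveal the heads attached to vertices at depth exactly $t$ (their out-edges may land back in the ball and close cycles), which only increases $N$ to at most $2\Delta^{t+2}$ and leaves the conclusion unchanged.
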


The proof relies on a classical breadth-first-search exploration argument and can be found in section 3.2 of \cite{caputo_salez_bordenave}. In particular, under assumption \eqref{H}, $\mathscr{T}^s = \mathscr{P}^s$ with high probability for every $s \leqslant t$, so $P^s = P^{(s)}$. Some related work on cycles in those random digraphs can be found in \cite{cooper-frieze}.

For the rest of the paper, we fix $t$ as in the preceding proposition with $\alpha<1/4$, that is 
\begin{equation}\label{def:t}
t = \lceil \alpha \log_\Delta(n) \rceil .
\end{equation} 

The parameter $\alpha$ can be chosen arbitrarily small, as long as it is strictly smaller than $1/4$. This freedom will be used in Section \ref{use_alg}.

\subsubsection{Tangled remainders}\label{sec:tangled_remainders}

We finally define our last ingredient: tangles. We first need a notation for the concatenation of two paths.

\begin{notation}[concatenation]\label{nota:conca}If $\mathbf{p} = (\mathbf{e}_s, \mathbf{f}_s)_{1\leqslant s \leqslant k}$ is a path of length $k$ and if $\mathbf{p}' = (\mathbf{e}'_s, \mathbf{f}'_s)_{1\leqslant s \leqslant k'}$ is a path of length $k'$, with $\mathbf{f}_k$ attached to the same vertex as $\mathbf{e}'_1$, then the concatenation $(\mathbf{p}, \mathbf{p}')$ will be the path of length $k+k'$ defined by
$$(\mathbf{e}_1, \mathbf{f}_1, \dotsc, \mathbf{e}_k, \mathbf{f}_k, \mathbf{e}'_1, \mathbf{f}'_1, \dotsc, \mathbf{e}'_{k'}, \mathbf{f}'_{k'} ). $$
This definition obviously extends to the concatenation of three or more paths, provided that the final tail of each path is attached to the same vertex as the beginning head of the next path.
\end{notation}

\begin{defin}\label{defin:RTL}$\mathscr{R}^{t,\ell} (i,j)$ is the set of all \textit{tangled} paths $\mathbf{p}$ going from $i$ to $j$, but which can be written in the form $\mathbf{p} = (\mathbf{p}_1,  \mathbf{p}_2,  \mathbf{p}_3 )$ where
\begin{itemize}
\item the path $\mathbf{p}_1$ belongs to $\mathscr{T}^{\ell-1}(i,g)$ where $g$ is a vertex of the graph,
\item $\mathbf{p}_2 = (\mathbf{e}, \mathbf{f} )$ is a path which goes from $g$ to $h$ in only one step, with $h$ a vertex of the graph,
\item the path $\mathbf{p}_3$ belongs to $\mathscr{T}^{t-\ell}(h, j)$.
\end{itemize}

We also define the \emph{tangled rest} by 

\begin{equation}\label{def:tangled_rest}
R^{t,\ell}(i,j) = \sum_{\mathbf{p} \in \mathscr{R}^{t,\ell}(i,j)} \prod_{s=1}^{\ell-1} A(\mathbf{e}_s, \mathbf{f}_s ) \frac{1}{d^+_{\mathbf{e}_{\ell}}} \prod_{s=\ell+1}^t \underline{A}(\mathbf{e}_{s}, \mathbf{f}_s ).
\end{equation}

\end{defin}

In other words, the set $\mathscr{R}^{t,\ell}$ is the set of all paths that can be obtained by gluing two tangle-free paths with a bridge, but which in the end are tangled. 
\begin{figure}[H]
\centering
\includegraphics[scale=0.7]{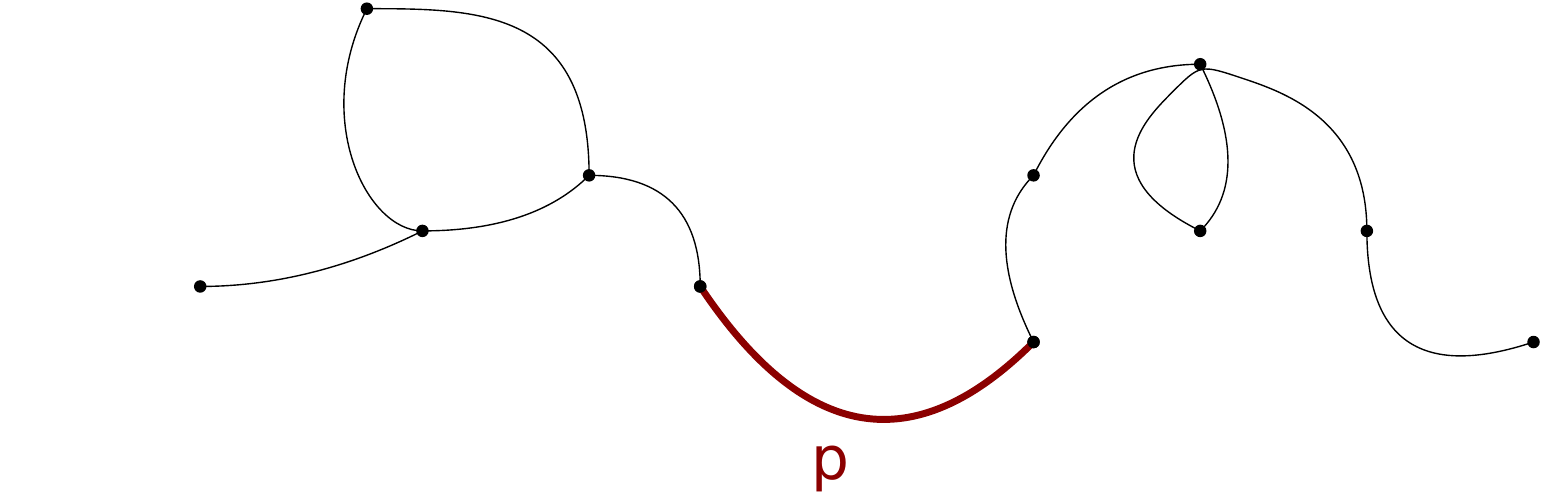}
\caption{An element in $\mathscr{R}^{t,\ell}$. The two black paths are tangle-free, but when we glue them together with the ``bridge" $ \mathbf{p} $, we create a tangle.}
\end{figure}

\subsection{Proof of the main theorem.}\label{use_alg}

The main algebraic idea of the proof relies on the fact that one can bound $|\lambda_2|$ using the operator norm of matrices $\underline{P}^{(t)}$ and $R^{t,\ell}$ for $\ell \leqslant t$. The core of the paper will consist in bounds for $\Vert \underline{P}^{(t)}\Vert $ and $\Vert R^{t,\ell}\Vert $. Recall that $\tilde{\rho} = \rho \vee \delta^{-1}$.

\begin{prop}\label{control1}
Let $t$ be as in \eqref{def:t}. For any $c>1$, with high probability, we have 
\begin{equation}
\Vert \underline{P}^{(t)}\Vert  \leqslant \ln(n)^D (c\tilde{\rho})^{t}, 
\end{equation}
where $D$ is a positive constant. 
\end{prop}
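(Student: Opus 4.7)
The plan is to apply the trace (high-moment) method. For an even integer $2\ell$ to be chosen of order $\log n$, we start from
\begin{equation*}
\Vert \underline{P}^{(t)}\Vert^{2\ell} \;\leqslant\; \mathrm{tr}\bigl((\underline{P}^{(t)}(\underline{P}^{(t)})^\top)^\ell\bigr),
\end{equation*}
take expectations, and invoke Markov's inequality. The main task is to obtain an upper bound on the expected trace of the form $n(\log n)^{O(\ell)}(c\tilde{\rho})^{2t\ell}$; after extraction of the $2\ell$-th root, the prefactor $n$ gets absorbed into a power of $\log n$ as long as $\ell\asymp \log n$, leaving exactly the claimed bound.

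The expected trace expands as a sum over \emph{closed generalized walks} of length $2t\ell$, each obtained by concatenating $2\ell$ tangle-free paths of length $t$ (alternating between $\underline{P}^{(t)}$ and its transpose, with matched endpoints). Each such walk $W$ contributes
\begin{equation*}
\varphi(W) \;=\; \prod_{s=1}^{2t\ell} \frac{\mathbf{1}_{\sigma(\mathbf{e}_s)=\mathbf{f}_s} - 1/M}{d^+_{\mathbf{e}_s}}.
\end{equation*}
Because $\sigma$ is uniform on $\mathfrak{S}_M$, the joint law of the indicators $\mathbf{1}_{\sigma(\mathbf{e})=\mathbf{f}}$ is computable explicitly by a permanent formula, and the centering forces $\mathbf{E}\,\varphi(W)$ to vanish unless every distinct pair $(\mathbf{e},\mathbf{f})$ appearing in $W$ does so at least twice. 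For such \emph{matched} walks, pairing up repeated occurrences of each edge and summing over the pair's location produces a factor of $\rho^2$ via the identity $\rho^2 = M^{-1}\sum_i d_i^-/d_i^+$, while the trivial per-step bound $1/\delta$ accounts for the alternative $\delta^{-1}$ in $\tilde{\rho}$.

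The combinatorial heart of the proof is the enumeration of contributing walks, organized by their \emph{quotient multigraph} $H=(V_H,E_H)$ (vertices visited, distinct matched pairs used) together with the excess $\chi(H) = |E_H|-|V_H|+c(H)$. An encoding associates to each matched walk $W$ a spanning tree of $H$ plus auxiliary bookkeeping data (edge repetitions, glueings between the $2\ell$ sub-paths, ``tangle'' edges outside the tree). The tangle-free constraint on each of the $2\ell$ sub-paths limits the tangle edges introduced per sub-path to at most one, giving $\chi(H)\leqslant 2\ell$. One then bounds the number of walks of a given shape by $C^{t\ell}n^{|V_H|}(t\ell)^{O(\chi)}$ and the weighted expectation of each by an appropriate product of $1/M$'s, $1/d^+$'s, and $d^-$'s whose aggregate is at most $(c\tilde{\rho})^{2t\ell}$; summing first over $\chi$ and then over shapes yields the desired expected-trace bound. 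The main obstacle will be this final bookkeeping: matching the in-degree factors $d^-$ coming from permanent expectations with the out-degree factors $1/d^+$ from the transition weights so that the product telescopes to $\rho^2$ per matched edge, and separating the genuinely ``tree-like'' contributions (worth $\rho^{2t\ell}$) from the degenerate low-degree walks (worth $\delta^{-2t\ell}$) that trigger the $\delta^{-1}$ alternative. Once this per-shape bound is in place, Markov's inequality applied with $\ell=\lceil\log n\rceil$ completes the proof.
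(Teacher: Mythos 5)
Your overall skeleton (trace method applied directly to $\underline{P}^{(t)}$, encoding of closed concatenations of tangle-free paths by a spanning tree plus excess data, telescoping of the $d^-/d^+$ factors into $\rho^2$, Markov at the end) is the same as the paper's, but two of your key claims are wrong and a third choice breaks the quantitative bookkeeping. First, it is not true that $\mathbf{E}\,\varphi(W)$ vanishes unless every pair $(\mathbf{e},\mathbf{f})$ appears at least twice: the indicators $\mathbf{1}_{\sigma(\mathbf{e})=\mathbf{f}}$ are dependent under the uniform permutation, and already for two distinct singleton pairs one gets $\frac{1}{M(M-1)}-\frac{1}{M^2}\neq 0$. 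Walks containing singleton edges therefore contribute, and controlling them is a substantial part of the real proof: the paper's Theorem \ref{tech} shows each simple consistent edge costs an extra factor of order $N/\sqrt{M}$, and a whole region ($\mathcal{L}$ in Section \ref{sec:asymptotic_analysis}) of the final summation is devoted to these walks. Your sketch has no mechanism for them. Second, the bound $\chi(H)\leqslant 2\ell$ is false. Tangle-freeness only limits the excess created \emph{inside} each sub-path (at most one short cycle each); excess edges also arise from collisions \emph{between} different sub-paths (the paper's long cycling times), and $\chi$ can be as large as order $2t\ell-v+1$. The paper handles this by making each unit of excess pay a factor $n^{-\gamma}$ (the $(C/M)^{\chi}$ from the expectation against the $n^{17\chi/\A}$ from the enumeration), not by capping $\chi$.

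Third, your parameter choice and counting estimate are incompatible. The number of walks with a given vertex sequence and excess $\chi$ is not $(t\ell)^{O(\chi)}$: every one of the $2\ell$ sub-paths may traverse excess edges and must be marked each time it leaves the spanning tree, so the encoding cost is $(\mathrm{poly}(t,\ell,v,\Delta))^{O(\ell(\chi+1))}$, as in the paper's bound $(2\Delta t m)^{8m\chi+12m}$. With your choice $\ell=\lceil\log n\rceil$ this cost is $n^{\Theta(\chi\log\log n)}$ per unit of excess, which the gain of $M^{-1}$ per extra edge cannot absorb; this is precisely why the paper takes $m\asymp \log n/\log\log n$, so the cost per excess edge is $n^{O(1/\A)}\ll n^{\gamma}$, and the resulting $n^{O(1)}$ prefactor is then absorbed into $\ln(n)^D$ when taking the $2m$-th root. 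Relatedly, a per-shape factor $C^{t\ell}$ with a fixed $C>1$, as written in your count, is already fatal: after extracting the $2\ell$-th root it multiplies $\tilde{\rho}$ by $C^{1/2}$, whereas the statement must hold for every $c>1$; in the paper the only degree-independent constants appear to the power $\chi$ (or $m$), never per step. So the route is the right one, but as written the proof does not go through without the singleton-edge estimate, the correct treatment of inter-path excess, and the $\log n/\log\log n$ choice of the moment order.
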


\begin{prop}\label{control2}
Let $t$ be as in \eqref{def:t} and let $\ell$ be in $\{1, \dotsc, t \}$. With high probability, we have 
\begin{equation}
\Vert R^{t,\ell}\Vert  \leqslant n \ln(n)^D (c\tilde{\rho})^{t+\ell}
\end{equation}
where $D$ is a positive constant.
\end{prop}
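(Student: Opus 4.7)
The plan is to control $\|R^{t,\ell}\|$ by the high trace method, in the spirit of~\cite{bordenave2015} but with a much coarser bound than for Proposition~\ref{control1}, since $R^{t,\ell}$ appears only as a perturbation term in the algebraic decomposition of $P^t$. The starting point is
\[
\|R^{t,\ell}\|^{2m} \leq \operatorname{Tr}\bigl((R^{t,\ell}(R^{t,\ell})^\top)^m\bigr),
\]
for an integer $m$ to be optimized. Expanding the right-hand side yields a sum over $2m$-tuples $(\mathbf{p}^{(1)},\dots,\mathbf{p}^{(2m)})$ of paths in $\mathscr{R}^{t,\ell}$ whose concatenation closes up to a cycle. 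Each summand is a product of uncentered weights $A(\mathbf{e}_s,\mathbf{f}_s)$ on the $\ell-1$ left steps of every path, a bridge factor $1/d^+_{\mathbf{e}_\ell}$, and centered weights $\underline{A}(\mathbf{e}_s,\mathbf{f}_s)$ on the $t-\ell$ right steps. After taking expectations, the proposition will follow from a moment bound
\[
\mathbf{E}\|R^{t,\ell}\|^{2m} \leq \bigl(n\ln(n)^D(c\tilde\rho)^{t+\ell}\bigr)^{2m}
\]
combined with Markov's inequality and the choice $m\asymp\log n$.

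The moment bound itself I would attack by grouping $2m$-tuples according to their skeleton graph $H=\bigcup_k G(\mathbf{p}^{(k)})$, with $v$ vertices and $e$ edges. Standard facts about the uniform matching $\sigma$ produce an edge factor $O(1/M)$ per edge of $H$. The centered right parts enforce that any half-edge matching used only once among the $2m$ paths contributes zero in expectation, so each edge of $H$ that appears only in right parts must be traversed at least twice; tangle-freeness of each right piece then allows the usual tree-like path counting to yield a contribution $(c\tilde\rho)^{m(t-\ell)}$, exactly as in Proposition~\ref{control1}. The uncentered left parts provide no such cancellation, but because each left piece is tangle-free of length $\ell-1$, the same tree recursions bound their contribution by $(c\tilde\rho)^{m(\ell-1)}$ up to logarithmic factors. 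Each bridge contributes $\leq 1/\delta \leq \tilde\rho$, yielding one further factor of $\tilde\rho$ per path and thus the full exponent $t+\ell$.

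The key obstacle, and the point where the tangle hypothesis enters crucially, is extracting the correct prefactor. Because each full path $\mathbf{p}^{(k)}$ is tangled, $G(\mathbf{p}^{(k)})$ has at least two cycles and hence excess $e-v \geq 1$; summed over $k$, with careful accounting of edges shared between paths, the skeleton $H$ satisfies $e - v \gtrsim m$. This saves a factor $n^{-m}$ relative to the naive count $n^v/M^e$, which is exactly what allows the prefactor $n$ rather than a larger power. The delicate combinatorics here consist in precisely quantifying this tangle-induced excess while simultaneously tracking the mixed centered/uncentered cancellation pattern, and in controlling how the single bridge edge $\mathbf{p}_2$ can itself create the tangle by linking the two otherwise tangle-free pieces. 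The crude remaining factor $n$ is harmless in Section~\ref{use_alg} because $R^{t,\ell}$ is summed over $\ell \leq t = O(\log n)$ and multiplied by decaying factors depending on $\ell$.
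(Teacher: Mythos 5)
Your overall framework (a $2m$-th moment/trace bound over $2m$-tuples of paths in $\mathscr{R}^{t,\ell}$, then Markov) is indeed the route the paper takes, but the bookkeeping at the two decisive points is wrong, and as written the sketch does not produce the stated bound. First, the exponent $t+\ell$. The uncentered left segments enjoy no cancellation whatsoever: there is no ``tree recursion'' giving a factor $(c\tilde\rho)^{\ell-1}$ per path from products of $A$-weights (the Perron eigenvalue $1$ forbids any such operator-norm decay), and even your own arithmetic, $(t-\ell)+(\ell-1)+1=t$ per path, does not add up to $t+\ell$. The true mechanism is combinatorial: centered steps must essentially reuse edges (single centered edges are only \emph{nearly} cancelled, paying $O(tm/\sqrt{M})$ each by Theorem \ref{tech} -- your claim that they contribute exactly zero is false for the uniform matching $\sigma$), whereas uncentered steps may open fresh vertices, each worth roughly $n\,d^-_i d^+_i (d^+_i)^{-2} M^{-1} \approx \rho^2$; hence the dominant contributions have $v \approx m(t+\ell)$ vertices, the threshold in the analogue of Proposition \ref{regions} shifting from $v-tm-1$ to $v-(t+\ell)m-1$ precisely because up to $2m\ell$ left-of-bridge edges are exempt from the ``doubled or pay $\sqrt{M}$'' rule. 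This yields $(c\tilde\rho)^{2m(t+\ell)}$ but leaves an uncompensated weight loss $\delta^{2\ell m}$ (the $\delta^{-2(tm-v)}$ term of Lemma \ref{lem:weightp} with $v>tm$). That loss is exactly what the tangledness is for: by Remark \ref{rq:tangle} one has $\chi \geqslant 4m$, so the class-counting factor $(Cn^{-\gamma})^{\chi}$ supplies $n^{-4m\gamma}$, which beats $\delta^{2\ell m}\leqslant \delta^{2tm}= n^{2m\alpha\ln\delta/\ln\Delta}$ only after choosing $\alpha$ in \eqref{def:t} small enough -- this is where the freedom in $\alpha$ announced in Section \ref{use_alg} is actually consumed, a point absent from your proposal.

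Second, the prefactor $n$. You attribute it to the tangle-induced excess (``$e-v\gtrsim m$ saves $n^{-m}$, which allows the prefactor $n$''); in fact that excess is spent as described above, and the prefactor has a different origin: the $2m$ bridge factors $1/d^+_{\mathbf{e}_\ell}$ carry no companion $1/M$, so after applying Theorem \ref{tech} to the tuple with the bridges deleted one only gets $(c/M)^{a-2m}$ instead of $(c/M)^{a}$, i.e. a loss of order $M^{2m}\approx n^{2m}$ in the $2m$-th moment, which after taking the $2m$-th root is exactly the factor $n$ in the statement. Without identifying the $\delta^{2\ell m}$ loss and its cancellation by $\chi\geqslant 4m$ and small $\alpha$, your argument would at best give a bound of the form $n\ln(n)^{D}(c\tilde\rho)^{t}(c\delta\tilde\rho)^{\ell}$, which, since $\delta\tilde\rho\geqslant 1$, is strictly weaker than the claimed $n\ln(n)^{D}(c\tilde\rho)^{t+\ell}$; and the claim that single centered edges vanish in expectation would, if taken literally, also erase the need for the three-region decomposition ($\mathcal{H}'_1,\mathcal{H}'_2,\mathcal{L}'$) that the correct proof requires.
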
 

The proof of those two propositions is an application of the classical trace method and is quite technical. It will be postponed at Sections \ref{tech_section} - \ref{section:tangled_rest}. We now state the central proposition for bounding the second eigenvalue of $P$. Its proof is exposed in Section \ref{section:prop_loc_eigvals}. 

\begin{prop}\label{local_eigvals}
With high probability, the second eigenvalue $\lambda_2$ of the matrix $P$ satisfies the following inequality: 
\begin{equation}\label{eqe:local_eigvals}
|\lambda_2|^t\leqslant 2 \ln(n)^3 \left( \Vert \underline{P}^{(t)}\Vert  + \frac{1}{M} \sum_{\ell=1}^t \Vert R^{t,\ell}\Vert  \right).
\end{equation} 
\end{prop}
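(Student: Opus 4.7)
The plan is based on an algebraic decomposition of $P^{(t)}$, followed by a projection argument against a right eigenvector of $\lambda_2$, and concluded with a polylogarithmic bound on the stationary distribution.

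\emph{Step 1 (decomposition).} Set $A(\mathbf{e},\mathbf{f}) = \mathbf{1}_{\sigma(\mathbf{e})=\mathbf{f}}/d_\mathbf{e}^+$ and $\underline{A}(\mathbf{e},\mathbf{f}) = A(\mathbf{e},\mathbf{f}) - 1/(Md_\mathbf{e}^+)$. The telescoping identity
\[
\prod_{s=1}^t A_s \;-\; \prod_{s=1}^t \underline{A}_s \;=\; \sum_{\ell=1}^t \prod_{s<\ell}A_s \cdot \tfrac{1}{M d^+_{\mathbf{e}_\ell}}\prod_{s>\ell}\underline{A}_s,
\]
summed over $\mathbf{p}\in\mathscr{T}^t(i,j)$, decomposes $P^{(t)}-\underline{P}^{(t)}$ as a sum over cut positions $\ell$. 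The combinatorial input is that $\mathscr{T}^t(i,j)$ coincides with the product-shaped set of triples $(\mathbf{p}_1,\mathbf{p}_2,\mathbf{p}_3)\in\mathscr{T}^{\ell-1}(i,g)\times\mathrm{bridge}(g,h)\times\mathscr{T}^{t-\ell}(h,j)$ minus the tangled concatenations $\mathscr{R}^{t,\ell}(i,j)$. The bridge factor evaluates to $\sum_{\mathbf{e}\in E^+(g),\mathbf{f}\in E^-(h)}(d_\mathbf{e}^+)^{-1}=d_h^-$, which together with the $1/M$ from the telescoping produces exactly $\pi^-(h)$. One obtains
\[
P^{(t)} \;=\; \underline{P}^{(t)} \;+\; \sum_{\ell=1}^t P^{(\ell-1)}\mathbf{1}(\pi^-)^\top \underline{P}^{(t-\ell)} \;-\; \frac{1}{M}\sum_{\ell=1}^t R^{t,\ell}.
\]
By Proposition~\ref{lemme_pas_de_noeuds}, with high probability $G$ is $t$-tangle-free, so $P^s=P^{(s)}$ for every $s\leqslant t$; in particular $P^{(\ell-1)}\mathbf{1}=\mathbf{1}$, and the middle sum reduces to the rank-one matrix $\mathbf{1}(\pi^-)^\top \Sigma$ with $\Sigma := \sum_{\ell=1}^t \underline{P}^{(t-\ell)}$.

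\emph{Step 2 (projection against an eigenvector).} Let $\phi$ be a right eigenvector of $P$ for $\lambda_2$ with $\|\phi\|=1$. Since $\lambda_2\neq 1$, decompose $\phi=\alpha\mathbf{1}+\psi$ with $\psi\in\mathbf{1}^\perp$ and $\psi\neq 0$; biorthogonality with the left eigenvector $\pi_\star$ of eigenvalue~$1$ gives $\pi_\star^\top\phi=0$, hence $\alpha=-\pi_\star^\top\psi$. Applying the decomposition to $\phi$, using $P^t\phi=\lambda_2^t\phi$, and projecting onto $\mathbf{1}^\perp$ via $I-\Pi$ with $\Pi=\mathbf{1}\mathbf{1}^\top/n$, the rank-one middle term (whose image lies in $\mathrm{span}(\mathbf{1})$) vanishes, leaving
\[
\lambda_2^t\psi \;=\; (I-\Pi)\underline{P}^{(t)}\phi \;-\; \frac{1}{M}\sum_{\ell=1}^t (I-\Pi) R^{t,\ell}\phi.
\]
Since $I-\Pi$ is a contraction and $\|\phi\|=1$, taking norms yields
\[
|\lambda_2|^t\|\psi\| \;\leqslant\; \|\underline{P}^{(t)}\| + \frac{1}{M}\sum_{\ell=1}^t \|R^{t,\ell}\|.
\]

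\emph{Step 3 (lower bound on $\|\psi\|$, the main obstacle).} From $\alpha=-\pi_\star^\top\psi$ and $1=\|\phi\|^2=n\alpha^2+\|\psi\|^2$ one gets $\|\psi\|^{-2}\leqslant 1+n\|\pi_\star\|^2\leqslant 1+n\|\pi_\star\|_\infty$. The remaining task is to establish, with high probability, a polylogarithmic bound of the form $n\|\pi_\star\|_\infty\leqslant 4\ln(n)^6-1$, which gives $\|\psi\|^{-1}\leqslant 2\ln(n)^3$ and closes the argument. This is precisely the ``approximation of the stationary distribution'' promised in the outline: since $\pi_\star$ is not available explicitly, one approximates it via the iterates $(\pi^-)^\top P^k$ for $k\sim\log n$ (which converge to $\pi_\star$), and controls the error using the bounded-degree hypothesis~\eqref{H} together with the estimates on $\underline{P}^{(s)}$ and $R^{s,\ell}$ from Propositions~\ref{control1} and~\ref{control2}, or alternatively by invoking the analysis of the invariant measure performed in~\cite{caputo_salez_bordenave}. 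This uniform polylogarithmic control of $\|\pi_\star\|_\infty$ is the delicate point of the proof; in contrast, the decomposition and the projection steps are routine algebra, and the constant $2$ in the final bound absorbs the additive slack between $\|\pi_\star\|^2$ and $\|\pi_\star\|_\infty$.
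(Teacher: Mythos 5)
Your Steps 1 and 2 are correct and essentially reproduce the paper's algebra: the telescoping decomposition is exactly the lemma giving \eqref{eq:lemme_decomp}, and testing against a vector orthogonal to $\mathbf{1}$ to kill the rank-one middle term is the same mechanism the paper uses (there phrased as the bound $\Vert P^t - xy^\top\Vert \leqslant K_t$ followed by the rank-one Bauer--Fike Lemma \ref{quanti_bauer_fike}, rather than as a projection of an eigenvector; your version also needs $\lambda_2\neq 1$, i.e.\ simplicity of the Perron eigenvalue, which holds whp by strong connectivity --- a minor point).

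The genuine gap is Step 3. Your argument stands or falls on the claim that, with high probability, $n\Vert\pi_\star\Vert_\infty = O(\ln(n)^6)$, and this is neither proved nor available off the shelf. It is not a consequence of Propositions \ref{control1}--\ref{control2} in any direct way: the proposed bootstrap via the iterates $(\pi^-)^\top P^k$ requires a uniform-in-$k$, $\ell^\infty$ control of the approximation error $\pi_\star - (\pi^-)^\top P^k$, which is precisely the kind of quantitative mixing information one is trying to establish, so the route is circular unless a genuinely new argument is supplied; and the invariant-measure analysis of \cite{caputo_salez_bordenave} does not hand you a polylogarithmic $\ell^\infty$ bound on $\pi_\star$ (their extremal estimates on the invariant measure allow polynomial corrections, and what this paper actually imports from them is only the annealed collision estimate \eqref{annealed}). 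This is exactly the difficulty the paper is engineered to avoid: instead of the unknown $\pi_\star$, it compares $P^t$ to the rank-one matrix $xy^\top$ with $x=\mathbf{1}$ and $y=\frac1n (P^t)^\top\mathbf{1}$ as in \eqref{def:x,y}, so that the only analytic input needed is the averaged, finite-time quantity $\Vert y\Vert^2=\mathbf{P}'(X_t=Y_t)=O(\ln(n)^2/n)$, an annealed $\ell^2$ collision bound for two walks at time $t$ --- much softer than an $\ell^\infty$ bound on the infinite-time stationary law. As written, your proof replaces the paper's soft input by a strictly harder, unproven statement, so the proposal does not close; to repair it you should either prove the $\ell^\infty$ (or even just $\ell^2$, i.e.\ $n\Vert\pi_\star\Vert_2^2=O(\ln(n)^6)$) bound on $\pi_\star$ under \eqref{H}, or switch, as the paper does, to the time-$t$ vector $y$ together with the rank-one perturbation lemma.
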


We now conclude the proof of Theorem \ref{mainthm} from Propositions \ref{local_eigvals}, \ref{control1} and \ref{control2}.  For simplicity, note

\begin{equation}\label{def:K}
K_t = \Vert \underline{P}^{(t)}\Vert  + \frac{1}{M} \sum_{\ell=1}^t \Vert R^{t,\ell}\Vert .
\end{equation}

 As a direct consequence of the two preceding theorems and the fact $M \geqslant \delta n \geqslant n$, it is clear that with high probability, $K_t \leqslant \ln(n)^D (c\tilde{\rho})^{t} + \delta^{-1}(c\tilde{\rho})^t \ln(n)^D \sum_{\ell=1}^t  (c\tilde{\rho})^\ell$ which is equal to $ \ln(n)^D (c\tilde{\rho})^{t} \big(1+\delta^{-1}c\tilde{\rho}\frac{(c\tilde{\rho})^t - 1}{c\tilde{\rho} -1} \big)$.  If $c$ is close enough to $1$ to ensure that  $c \tilde{\rho}<1$, then as $n$ goes to infinity the term $1+\delta^{-1}c\tilde{\rho}\frac{(c\tilde{\rho})^t - 1}{c\tilde{\rho} -1}$ is bounded by some absolute constant $C$. We have proven that, with high probability, 
\begin{equation}\label{borne:kt}K_t \leqslant  \ln(n)^D (c\tilde{\rho})^t C.
\end{equation}

We now use Proposition \ref{local_eigvals} which states that $|\lambda_2|^t \leqslant 2\ln(n)^3K_t$, hence 
\begin{equation}
|\lambda_2|^t \leqslant 2C\ln(n)^{D+3}(c\tilde{\rho})^t.
\end{equation}
Take powers $1/t$ on both sides and use $t = \Theta(\ln(n))$: 
\begin{equation}|\lambda_2| \leqslant \big( 2C\ln(n)^{D+3} \big)^\frac{1}{t} c\tilde{\rho} = \big(1+o(1) \big) c\tilde{\rho} \end{equation}
which finally ends the proof of \eqref{claim} and Theorem \ref{mainthm}.

\subsection{Organisation of the rest of the paper.}

The rest of this paper is mainly devoted to the proof of Propositions \ref{control1}-\ref{control2}. Both are inspired from \cite{bordenave2015}. 

\begin{enumerate}
\item Section \ref{section:prop_loc_eigvals} gives the proof of Proposition \ref{local_eigvals}. 
\item In Section \ref{tech_section}, we state a lemma on correlation functions in the multigraph $G$ that will be used in the proof of Propositions \ref{control1} and \ref{control2}. This section is essentially technical and the proof of \eqref{tech:equation1} is postponed to Appendix \ref{app:tech}.
\item In Section \ref{strat}, we develop the general strategy used to prove Proposition \ref{control1} which is an adaptation of the trace method. This leads to two subproblems, one purely combinatorial and one purely probabilistic. The combinatorial part (counting paths) is treated in Section \ref{sec:combi} and the probabilistic one (bounding expectations) in Section \ref{sec:analysis_f}.
\item Finally, the asymptotic analysis is done in Section \ref{sec:asymptotic_analysis}, thus concluding the proof of Proposition \ref{control1}. 
\item The exact same steps are adapted to the proof of Proposition \ref{control2} in the last section. 
\end{enumerate}

\section{Proof of Proposition \ref{local_eigvals}.}\label{section:prop_loc_eigvals}

The method for the bound \eqref{eqe:local_eigvals} is inspired from \cite{massoulie_rama} and was developped in \cite{bordenave_lelarge_massoulie} and \cite{bordenave2015}. The main steps are as follows: 
\begin{enumerate}
\item express $P^t$ as a weighted sum of matrix products involving the tangle-free centered matrices $\underline{P}^{(t)}$ and the tangled rest $R^{t,\ell}$, 
\item use this expression to make $P^t$ appear as a perturbation of a rank $1$ matrix, 
\item and finally use classical results from linear algebra to link the eigenvalues of $P^t$ with those of this perturbed matrix.
\end{enumerate}

\textbf{Notation}. If $\mathbf{e}$ is a head and $\mathbf{f}$ is a tail, then we will adopt the following notations: 
\begin{equation}\label{nota:A}A(\mathbf{e}, \mathbf{f})  =  \frac{ \mathbf{1}_{\sigma(\mathbf{e}) = \mathbf{f}}}{d_{\mathbf{e}}^+}  \qquad \text{and} \qquad \underline{A}(\mathbf{e}, \mathbf{f})  =  \frac{ \mathbf{1}_{\sigma(\mathbf{e}) = \mathbf{f}} - 1/M}{d_{\mathbf{e}}^+}.\end{equation}
With these notations, the matrix $P^t$ has the following expression:
$$P^t (i,j) = \sum_{\mathbf{p} \in \mathscr{P}_{i,j}^t} \prod_{s=1}^t A(\mathbf{e}_{s}, \mathbf{f}_{s})$$

\subsection{Telescoping products of real numbers.}

If $x_1, \dotsc,  x_t, y_1, \dotsc,  y_t$ are arbitrary complex numbers, we have the following ``telescopic product-sum" :
\begin{equation}\label{telescopic}
 \prod_{s=1}^t y_s=\prod_{s=1}^t x_s - \sum_{\ell=1}^t \prod_{s=1}^{\ell-1} y_s (x_\ell - y_\ell) \prod_{\ell+1}^t x_s   . 
\end{equation}

Recall Definitions \ref{def:CP} of $\underline{P}^t$ and Definition \ref{def:CTFP} of $\underline{P}^{(t)}$ on page \pageref{def:CTFP}. We apply \eqref{telescopic} to the matrix $P^{(t)}$, with $y_s=A(\mathbf{e}_s, \mathbf{f}_s)$ and $x_s = y_s - (Md_{\mathbf{e}_s}^+)^{-1}$. Note that the choice \eqref{def:t} for $t$ implies that $P^t = P^{(t)}$ with high probability due to Proposition \ref{lemme_pas_de_noeuds}. Hence, with high probability,

\begin{align}
P^t = P^{(t)}&= \sum_{\mathbf{p} \in \mathscr{T}^t(i,j)} \prod_{s=1}^t A(\mathbf{e}_{s}, \mathbf{f}_{s}) \\
&= \sum_{\mathbf{p} \in \mathscr{T}^t(i,j)} \prod_{s=1}^t \underline{A} (\mathbf{e}_{s}, \mathbf{f}_{s}) - \sum_{\mathbf{p} \in \mathscr{T}^t(i,j)}  \sum_{\ell=1}^t  \prod_{s=1}^{\ell-1} A(\mathbf{e}_{s}, \mathbf{f}_s ) \Big( \underline{A}(\mathbf{e}_\ell, \mathbf{f}_\ell) - A(\mathbf{e}_\ell, \mathbf{f}_\ell ) \Big)\prod_{\ell+1}^t \underline{A}(\mathbf{e}_{s}, \mathbf{f}_s).
\end{align}
By definition (see \eqref{nota:A}), we have $\underline{A}(\mathbf{e}_\ell, \mathbf{f}_\ell) - A(\mathbf{e}_\ell, \mathbf{f}_\ell ) = - (Md^+_{\mathbf{e}_\ell} )^{-1}$, so finally 
\begin{equation}\label{eq:Pt_aux}
P^t= \underline{P}^{(t)} -  \sum_{\ell=1}^t \frac{1}{M} \sum_{\mathbf{p} \in \mathscr{T}^t(i,j)}  \prod_{s=1}^{\ell-1} A(\mathbf{e}_{s}, \mathbf{f}_s )\frac{1}{d_{\mathbf{e}_\ell}^+} \prod_{s=\ell+1}^t \underline{A}(\mathbf{e}_{s}, \mathbf{f}_s ).
\end{equation}

\subsection{Gluing paths and gathering the remainders.}

We now decompose the set $\mathscr{T}^t(i,j)$ appearing in the sum in the right hand side of \eqref{eq:Pt_aux}. Recall that the out-degree distribution $\pi^-$ was defined in \eqref{exp_P} on page \pageref{exp_P}.

\begin{lem}  With high probability, 
\begin{equation}\label{eq:lemme_decomp}
P^t = \underline{P}^{(t)} - \sum_{\ell=1}^t P^{\ell-1} \mathbf{1}(\pi^-)^\top \underline{P}^{t-\ell} + \frac{1}{M} \sum_{\ell=1}^t  R^{t,\ell}.
\end{equation}
\end{lem}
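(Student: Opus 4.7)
\medskip

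\textbf{Proof plan.} Starting point: equation \eqref{eq:Pt_aux} already gives, with high probability (by Proposition \ref{lemme_pas_de_noeuds}, which guarantees $\mathscr{T}^s = \mathscr{P}^s$ for $s \leqslant t$ so that $P^t = P^{(t)}$),
\[
P^t = \underline{P}^{(t)} - \sum_{\ell=1}^t \frac{1}{M} \sum_{\mathbf{p} \in \mathscr{T}^t(i,j)} \prod_{s=1}^{\ell-1} A(\mathbf{e}_s, \mathbf{f}_s) \frac{1}{d_{\mathbf{e}_\ell}^+} \prod_{s=\ell+1}^t \underline{A}(\mathbf{e}_s, \mathbf{f}_s).
\]
The task is to recognize the inner sum, for each fixed $\ell$, as a matrix product plus a tangled remainder. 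My plan is therefore to decompose each tangle-free path $\mathbf{p}$ of length $t$ at its $\ell$-th edge, identify the resulting factorized sum with the announced matrix product, and use the definition of $R^{t,\ell}$ to absorb the tangled leftovers.

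\medskip

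The key combinatorial identity I would establish is the following. For any vertices $i,j$ and any $\ell\in\{1,\dots,t\}$, the set $\mathscr{T}^t(i,j)$ together with $\mathscr{R}^{t,\ell}(i,j)$ forms a disjoint decomposition of the set of concatenations $(\mathbf{p}_1, (\mathbf{e},\mathbf{f}), \mathbf{p}_3)$ with $\mathbf{p}_1 \in \mathscr{T}^{\ell-1}(i,g)$, $(\mathbf{e},\mathbf{f})$ a one-step path from some $g$ to some $h$, and $\mathbf{p}_3 \in \mathscr{T}^{t-\ell}(h,j)$. Indeed, a subpath of a tangle-free path is tangle-free, so every $\mathbf{p}\in\mathscr{T}^t(i,j)$ admits such a decomposition; conversely, any concatenation of two tangle-free pieces is either tangle-free (hence in $\mathscr{T}^t(i,j)$) or tangled, and in the latter case it lies exactly in $\mathscr{R}^{t,\ell}(i,j)$ by Definition \ref{defin:RTL}. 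Writing $W(\mathbf{p}) = \prod_{s=1}^{\ell-1} A(\mathbf{e}_s,\mathbf{f}_s)\,\frac{1}{d_{\mathbf{e}_\ell}^+}\prod_{s=\ell+1}^t \underline{A}(\mathbf{e}_s,\mathbf{f}_s)$, the weight $W$ factorizes across the three pieces, so summing over the concatenations gives
\[
\sum_{\text{concat.}} W \;=\; \sum_{g,h} P^{(\ell-1)}(i,g)\,\Bigl( \sum_{\mathbf{e}\in E^+(g),\,\mathbf{f}\in E^-(h)} \frac{1}{d_{\mathbf{e}}^+} \Bigr)\,\underline{P}^{(t-\ell)}(h,j).
\]
Since $d_{\mathbf{e}}^+ = d_g^+$ is constant on $E^+(g)$, the middle bracket equals $d_g^+ \cdot d_h^- / d_g^+ = d_h^-$, and the double sum over $g,h$ becomes the matrix product $P^{(\ell-1)}\,\mathbf{1}\,(d^-)^\top\,\underline{P}^{(t-\ell)}$.

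\medskip

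Subtracting the tangled part, we obtain
\[
\sum_{\mathbf{p}\in \mathscr{T}^t(i,j)} W(\mathbf{p}) \;=\; \bigl(P^{(\ell-1)} \mathbf{1} (d^-)^\top \underline{P}^{(t-\ell)}\bigr)(i,j) - R^{t,\ell}(i,j).
\]
Injecting this into \eqref{eq:Pt_aux}, dividing $d^-$ by $M$ to recover $\pi^-$, and using once more that on the high-probability event $\{G \text{ is } t\text{-tangle-free}\}$ we may replace $P^{(\ell-1)}$ and $\underline{P}^{(t-\ell)}$ by $P^{\ell-1}$ and $\underline{P}^{t-\ell}$, yields exactly \eqref{eq:lemme_decomp}.

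\medskip

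The only genuinely non-routine point is the combinatorial bijection/partition step above: one must check that a subpath of a tangle-free path is tangle-free (so that the decomposition $\mathbf{p} \mapsto (\mathbf{p}_1,(\mathbf{e},\mathbf{f}),\mathbf{p}_3)$ maps into $\mathscr{T}^{\ell-1}\times\cdots\times\mathscr{T}^{t-\ell}$), and that the complement in the set of concatenations is precisely $\mathscr{R}^{t,\ell}$ as defined. Everything else is arithmetic rearrangement; in particular the appearance of the rank-one factor $\mathbf{1}(\pi^-)^\top$ is forced by the collapse of the middle sum to $d_h^-/M = \pi^-(h)$. I expect no serious obstacle beyond this verification.
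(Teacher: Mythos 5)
Your proposal is correct and follows essentially the same route as the paper: start from \eqref{eq:Pt_aux}, split each tangle-free length-$t$ path at the $\ell$-th edge, identify the set of concatenations of two tangle-free pieces with $\mathscr{T}^t(i,j)\cup\mathscr{R}^{t,\ell}(i,j)$, factorize the weight so the middle sum collapses to $\pi^-(h)$ giving the rank-one factor $\mathbf{1}(\pi^-)^\top$, and finally replace $P^{(\ell-1)},\underline{P}^{(t-\ell)}$ by $P^{\ell-1},\underline{P}^{t-\ell}$ on the high-probability tangle-free event. The verification you flag (subpaths of tangle-free paths are tangle-free, and the leftover is exactly $\mathscr{R}^{t,\ell}$) is precisely the content of the paper's decomposition \eqref{eq:decomposition_Tt}, so no gap.
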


\begin{proof}We start from \eqref{eq:Pt_aux}: our main task will be to reorganize the sum
\begin{equation}\label{eq:pt_aux2}\frac{1}{M}\sum_{\mathbf{p} \in \mathscr{T}_{i,j}^t}  \prod_{s=1}^{\ell-1} A(\mathbf{e}_{s}, \mathbf{f}_s )\frac{1}{d_{\mathbf{e}_\ell}^+} \prod_{s=\ell+1}^t \underline{A}(\mathbf{e}_{s}, \mathbf{f}_s ).\end{equation}

We have the following decomposition when $\ell<t$ (remind that the union over $g,h$ is taken over all pairs of vertices): 
\begin{equation}\label{eq:decomposition_Tt}
\mathscr{T}^t(i,j) = \bigcup_{g,h} \{(\mathbf{p}_1,  \mathbf{p}_2, \mathbf{p}_3 ): \mathbf{p}_1 \in \mathscr{T}^{\ell-1}(i,g), \mathbf{p}_2 \in \mathscr{T}^{1}(g,h)\} ,  \mathbf{p}_3 \in \mathscr{T}^{t-\ell}(h,j)\} \setminus \mathscr{R}^{t,\ell} (i,j). 
\end{equation}

Therefore, we have the following symbolic identity between sums: 
\begin{equation} \sum_{\mathscr{T}^t (i,j)} = \sum_{g} \sum_h \sum_{\mathscr{T}^{\ell-1}(i,g)} \sum_{\mathscr{T}^{1}(g,h)} \sum_{\mathscr{T}^{t-\ell}(h,j)} - \sum_{\mathscr{R}^{t,\ell} (i,j)}.\end{equation}

In the RHS, the sum over $\mathscr{R}^{t,\ell}$ will be exactly the $(i,j)$ entry of the matrix $R^{t,\ell}$ (see \eqref{def:tangled_rest}). Note that, if the path $\mathbf{p} = (\mathbf{e}_s, \mathbf{f}_s)_{s \leqslant t}$ can be written in the form $(\mathbf{p}_1, \mathbf{p}_2, \mathbf{p}_3)$ with $\mathbf{p}_1$ in $\mathscr{T}^{\ell - 1}$ and so on as in \eqref{eq:decomposition_Tt}, then 
\begin{equation}\prod_{s=1}^{\ell-1} A(\mathbf{e}_{s}, \mathbf{f}_s )\frac{1}{d_{\mathbf{e}_\ell}^+} \prod_{s=\ell+1}^t \underline{A}(\mathbf{e}_{s}, \mathbf{f}_s ) = \left( \prod_{s=1}^{\ell-1} A(\mathbf{e}^1_{s}, \mathbf{f}^1_s ) \right) \left( \frac{1}{d^+_{\mathbf{e}^2_1}} \right)\left( \prod_{s=1}^{t-\ell} A(\mathbf{e}^3_{s}, \mathbf{f}^3_s ) \right)\end{equation}
where we noted $\mathbf{p}_1 = (\mathbf{e}^1_1, \mathbf{f}^1_1, ..., \mathbf{e}^1_{\ell-1}, \mathbf{f}^1_{\ell - 1} )$ and so on. With the same notations, we plug this into the five sums found above: 
\begin{multline}\label{eq:compo2}
\frac{1}{M}R^{t,\ell}(i,j) + \frac{1}{M}\sum_{\mathbf{p} \in \mathscr{T}^t(i,j)}  \prod_{s=1}^{\ell-1} A(\mathbf{e}_{s}, \mathbf{f}_s )\frac{1}{d_{\mathbf{e}_\ell}^+} \prod_{s=\ell+1}^t \underline{A}(\mathbf{e}_{s}, \mathbf{f}_s ) =\\ \sum_{g,h} \left( \sum_{\mathbf{p}_1 \in \mathscr{T}^{\ell-1}(i,g)} \prod_{s=1}^{\ell-1} A(\mathbf{e}^1_{s}, \mathbf{f}^1_s )\right)\left( \sum_{\mathbf{p}_2 \in \mathscr{T}^{1}(g,h)}\frac{1}{Md_{\mathbf{e}^2_\ell}^+} \right) \left(\sum_{\mathbf{p}_3 \in \mathscr{T}^{t-\ell}(h,j)} \prod_{s=1}^{t-\ell} \underline{A}(\mathbf{e}^3_{s}, \mathbf{f}^3_s ) \right)
\end{multline}

This is a matrix product : the first and third parentheses are $P^{(\ell-1)}(i,h)$ and $\underline{P}^{(t-\ell)}(h, j)$. The term in the middle is equal to $\sum_{\mathbf{e} \in E^+(g), \mathbf{f} \in E^-(h)} \frac{1}{M d_g^+}$ which simplifies to $d_h^-/M = \pi^- (h)$. We define $X(g,h) =  \pi^-(h) $ --- note the useful identity $X = \mathbf{1}(\pi^- )^\top$. The RHS of \eqref{eq:compo2} then becomes 
\begin{equation} \sum_{g,h} P^{(\ell-1)}(i,h) X(g,h) \underline{P}^{(t-\ell)}(h, j) = \big( P^{(\ell-1)} X \underline{P}^{(t-\ell)} \big)(i,j)\end{equation}
and the whole expression \eqref{eq:pt_aux2} becomes equal to $\big( P^{(\ell-1)} X \underline{P}^{(t-\ell)} \big)(i,j) - M^{-1} R^{t,\ell}(i,j)$. Putting it back in \eqref{eq:Pt_aux}, we get 
\[
P^t = \underline{P}^{(t)} - \sum_{\ell=1}^t  P^{(\ell-1)} X \underline{P}^{(t-\ell)}  +  \frac{1}{M}\sum_{\ell=1}^t  R^{t,\ell}
\]
which is exactly the claim in the lemma because because (due to Proposition \ref{lemme_pas_de_noeuds}), with high probability we have $\underline{P}^{(t-\ell)} = \underline{P}^{t-\ell}$ and $P^\ell = P^{(\ell)}$.
\end{proof}

\subsection{Expressing $P$ as a perturbation of a rank 1 matrix.}

We first define two real vectors $x,y \in \mathbb{R}^n$ by  
\begin{equation}\label{def:x,y}
x= \mathbf{1},\qquad y= \frac{1}{n}(P^t)^\top x
\end{equation}
and we recall the definition of $K_t$ given in \eqref{def:K}: 
\[
K_t = \Vert \underline{P}^{(t)}\Vert  + \frac{1}{M} \sum_{\ell=1}^t \Vert R^{t,\ell}\Vert .
\]
Note the presence of the important $M^{-1}$ factor in the right. The following lemma is crucial: it quantifies the distance between the matrix $P^t$ and a rank-1 matrix, namely $xy^\top$. 
\begin{lem}

With high probability, 
\begin{equation}
\Vert P^t - x y^\top \Vert  \leqslant K_t.
\end{equation}
\end{lem}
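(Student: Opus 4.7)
The plan is to rewrite $P^t - xy^\top$ as $\Pi P^t$ for a suitable projection $\Pi$, then left-multiply the decomposition \eqref{eq:lemme_decomp} by $\Pi$ and observe that $\Pi$ annihilates the troublesome middle sum. Since $x = \mathbf{1}$ and $y = \frac{1}{n}(P^t)^\top \mathbf{1}$, we have
\[
xy^\top \;=\; \mathbf{1}\bigl(\tfrac{1}{n}\mathbf{1}^\top P^t\bigr) \;=\; \tfrac{1}{n}\,\mathbf{1}\mathbf{1}^\top P^t,
\]
so $P^t - xy^\top = \Pi P^t$, where $\Pi := I - \tfrac{1}{n}\mathbf{1}\mathbf{1}^\top$. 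One checks immediately that $\Pi$ is the orthogonal projection onto $\mathbf{1}^\perp$; in particular $\|\Pi\| \leqslant 1$ and, crucially, $\Pi \mathbf{1} = 0$.

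Next I would apply $\Pi$ on the left of \eqref{eq:lemme_decomp}. Since $P$ is stochastic we have $P^{\ell-1}\mathbf{1} = \mathbf{1}$ for every $\ell \geqslant 1$, so each summand of the middle term factors as $\Pi \mathbf{1}\,(\pi^-)^\top \underline{P}^{t-\ell}$ and is killed by $\Pi$. On the high-probability event of Proposition \ref{lemme_pas_de_noeuds} where \eqref{eq:lemme_decomp} is valid, this leaves
\[
\Pi P^t \;=\; \Pi\,\underline{P}^{(t)} \;+\; \frac{1}{M}\sum_{\ell=1}^{t}\Pi\, R^{t,\ell}.
\]
Taking operator norms, using $\|\Pi\|\leqslant 1$ and the triangle inequality, gives
\[
\Vert P^t - xy^\top\Vert \;=\;\Vert\Pi P^t\Vert\;\leqslant\;\Vert\underline{P}^{(t)}\Vert + \frac{1}{M}\sum_{\ell=1}^t \Vert R^{t,\ell}\Vert \;=\; K_t,
\]
which is the desired inequality.

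The whole argument rests on a single algebraic observation: the rank-one subtraction $xy^\top$ is tuned, via the choice $x = \mathbf{1}$, to kill precisely the common rank-one factor $\mathbf{1}(\pi^-)^\top$ that appears in every term of the middle sum of \eqref{eq:lemme_decomp}. Once this is noticed, no estimation is required beyond the definition of $K_t$, so there is really no obstacle to speak of; the only subtle point is that \eqref{eq:lemme_decomp} itself holds only on the tangle-free event, which is exactly where the \textquotedblleft with high probability\textquotedblright{} qualifier in the statement comes from.
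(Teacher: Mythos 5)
Your proof is correct and is essentially the paper's argument: the same key cancellation ($P^{\ell-1}\mathbf{1}=\mathbf{1}$ annihilating the middle term of \eqref{eq:lemme_decomp} against anything orthogonal to $\mathbf{1}$) drives both proofs. You merely package it at the matrix level via the identity $P^t - xy^\top = \bigl(I - \tfrac{1}{n}\mathbf{1}\mathbf{1}^\top\bigr)P^t$ and $\Vert \Pi\Vert \leqslant 1$, whereas the paper tests $f^\top P^t$ against vectors $f\perp\mathbf{1}$ and then decomposes an arbitrary unit vector as $f+\alpha\mathbf{1}$; your formulation is a slightly cleaner rendering of the same route.
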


\begin{proof}
Let $f$ be a vector such that $\langle f, \mathbf{1} \rangle = 0$; multiply \eqref{eq:lemme_decomp} to the left by $f^\top$ to get
\begin{equation}f^\top P^t  = f^\top \underline{P}^{(t)} - \sum_{\ell=1}^t f^\top P^{\ell-1} \mathbf{1}(\pi^-)^\top \underline{P}^{t-\ell} +  \frac{1}{M}\sum_{\ell=1}^t  f^\top R^{t,\ell}.\end{equation}
The matrix $P^{\ell-1}$ is a Markov matrix, therefore $P^{\ell-1}\mathbf{1}=\mathbf{1}$ and the product $f^\top P^{\ell-1}\mathbf{1}(\pi^-)^\top \underline{P}^{t-\ell}$ vanishes. We get the fundamental inequality
\begin{equation}\label{eq:fund_lemme_perturb}
\Vert (P^t)^\top f\Vert =\Vert f^\top P^t\Vert  \leqslant \left(\Vert \underline{P}^{(t)}\Vert  + \frac{1}{M} \sum_{\ell=1}^t \Vert R^{t,\ell}\Vert \right)\cdot\Vert f\Vert  = K_t \Vert f\Vert .
\end{equation}
Let us momentarily note $ Q = P^t - xy^\top$ so that 
\begin{equation}\label{perturbation}
P^t = xy^\top +Q.
\end{equation}
These choices imply the crucial following observation: $x^\top P^t =x^\top x y^\top + x^\top Q = ny^\top +x^\top Q$. But as $x^\top P^t = ny^\top$ we get $x^\top Q = 0$. Hence, $Q$ vanishes when multiplied on the left by $\mathbf{1}$. Let $v$ be any unit vector: there is a real number $\alpha$ and a vector $f$ with $\langle f, x \rangle = 0$ such that $v=f + \alpha x$. The triangle inequality implies $\Vert v^\top Q\Vert  \leqslant \Vert f^\top Q\Vert  + \alpha \Vert x^\top Q\Vert  = \Vert f^\top Q\Vert $, hence $\Vert Q\Vert \leqslant \sup \Vert f^\top Q\Vert /\Vert Q\Vert $, where the supremum is taken over all nonzero vectors $f$ such that $\langle f, x \rangle=0$. Moreover, as $\langle f, x \rangle = 0$ we have $f^\top P^t = f^\top x y^\top + f^\top Q = f^\top Q$. Putting all these observations together with \eqref{eq:fund_lemme_perturb} yelds the following: 
\begin{align*}
\Vert Q\Vert   &\leqslant \sup_{\langle f, \mathbf{1} \rangle = 0} \frac{\Vert f^\top Q \Vert }{\Vert f\Vert } \\
&\leqslant \sup_{\langle f, \mathbf{1} \rangle = 0} \frac{\Vert f^\top P^t \Vert }{\Vert f\Vert } \\
&\leqslant K_t
\end{align*}
which is exactly the claim in the lemma.
\end{proof}

\subsection{Classical algebra to link the eigenvalues of $P^t$ with those of $xy^\top$.}

The main ingredient for the proof of Proposition \ref{local_eigvals} will be the following basic algebraic lemma (see Appendix \ref{app:algebra} for a complete proof of this result).

\begin{lem}[eigenvalue perturbation for rank 1 matrices]\label{quanti_bauer_fike}
Let $H,M$ be two real $n \times n$ matrices, with $M$ diagonalizable with rank $1$. Let $x,y$ be two vectors such that $M = xy^\top$. Define $\mu = \langle x, y\rangle$. 
\begin{enumerate}
\item The eigenvalues of $M+H$ lie in the union of the two balls $B(0,\varepsilon)$ and $B(\mu, \varepsilon)$, with $\varepsilon = 2\Vert x\Vert ^2\Vert y\Vert ^2 \mu^{-2} \Vert H\Vert $.
\begin{center}
\includegraphics[scale=1]{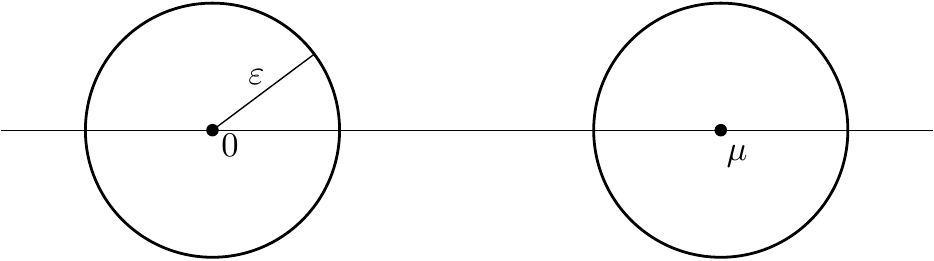}
\end{center}
\item If $B(0,\varepsilon) \cap B(\mu, \varepsilon) = \emptyset$, then there is exactly one eigenvalue of $M+H$ inside $B(\mu, \varepsilon)$ and all the other eigenvalues of $M+H$ are contained in $B(0, \varepsilon)$. 
\end{enumerate}
\end{lem}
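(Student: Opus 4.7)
The plan is a quantitative Bauer--Fike-style argument, split into a resolvent estimate for part (1) and a continuity argument for part (2).

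For part (1), assume $\mu \neq 0$ (otherwise $\varepsilon = +\infty$ and there is nothing to prove). The matrix $M = xy^\top$ is diagonalizable with eigenvalues $\mu$ (eigenvector $x$) and $0$ (eigenspace $y^\perp$), and its spectral projectors are $P_\mu = xy^\top/\mu$ and $P_0 = I - P_\mu$. A direct computation gives $\|P_\mu\| = \|x\|\|y\|/|\mu| =: \kappa$, and the classical identity $\|I - P\| = \|P\|$ for non-trivial projections on a real Hilbert space yields $\|P_0\| = \kappa$ as well; note that $\kappa \geq 1$ by Cauchy--Schwarz. The resolvent then decomposes as
\[
(\lambda I - M)^{-1} = \frac{P_0}{\lambda} + \frac{P_\mu}{\lambda - \mu} \qquad (\lambda \notin \{0, \mu\}),
\]
yielding $\|(\lambda I - M)^{-1}\| \leq \kappa(|\lambda|^{-1} + |\lambda - \mu|^{-1})$. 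If $\lambda \notin \{0, \mu\}$ is an eigenvalue of $M + H$, any eigenvector $v$ satisfies $v = (\lambda I - M)^{-1} H v$, forcing $\|(\lambda I - M)^{-1}\|\,\|H\| \geq 1$. Rearranging this gives the inequality $|\lambda - \mu|\,(|\lambda| - \kappa\|H\|) \leq \kappa\|H\|\,|\lambda|$, and splitting on whether $|\lambda| \geq 2\kappa\|H\|$ immediately places $\lambda$ in $B(0, 2\kappa\|H\|) \cup B(\mu, 2\kappa\|H\|) \subseteq B(0, \varepsilon) \cup B(\mu, \varepsilon)$ with $\varepsilon = 2\kappa^2\|H\|$, using $\kappa \geq 1$. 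The edge cases $\lambda \in \{0, \mu\}$ lie trivially in the two balls.

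For part (2), I would deploy a standard homotopy argument. Consider the family $M + tH$ for $t \in [0, 1]$; applying part (1) to $tH$ yields $\mathrm{spec}(M + tH) \subset B(0, \varepsilon_t) \cup B(\mu, \varepsilon_t)$ with $\varepsilon_t = 2\kappa^2 t\|H\| \leq \varepsilon$, so the spectrum remains in the disjoint union $B(0, \varepsilon) \sqcup B(\mu, \varepsilon)$ throughout the homotopy. Since the eigenvalues (with algebraic multiplicity) depend continuously on $t$ and cannot migrate between the two separated components, the number of eigenvalues in each ball is constant in $t$. At $t = 0$, $M$ has exactly one eigenvalue equal to $\mu$ and $n - 1$ equal to $0$; this partition is preserved at $t = 1$, which is exactly the claim.

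The main delicate point is the resolvent estimate in part (1), and specifically the identity $\|P_0\| = \|P_\mu\|$: with only the crude bound $\|P_0\| \leq 1 + \kappa$ one picks up an extra multiplicative constant, and I would need to check that the case-split above still closes with the stated $\varepsilon = 2\kappa^2\|H\|$. Beyond that, the arithmetic case-split in part (1) and the continuity step in part (2) are essentially routine eigenvalue-perturbation bookkeeping.
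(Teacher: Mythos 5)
Your proof is correct, but the route to part (1) is genuinely different from the paper's. The paper first proves the general Bauer--Fike theorem (eigenvalues of $A+H$ lie within $\Vert P\Vert\,\Vert P^{-1}\Vert\,\Vert H\Vert$ of the spectrum of $A$ for any diagonalizable $A=PDP^{-1}$) and then, in a separate proposition, bounds the condition number of a diagonalizing matrix of $xy^\top$ by an explicit construction: normalizing $x,y$, completing $y^\perp$ to a basis, reducing to a $2\times 2$ matrix $\bigl(\begin{smallmatrix}\mu&0\\ b&1\end{smallmatrix}\bigr)$ with $b^2=1-\mu^2$, and computing its condition number to get $c(P)\leqslant 2\Vert x\Vert^2\Vert y\Vert^2\mu^{-2}$. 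You instead work directly with the spectral projectors $P_\mu=xy^\top/\mu$ and $P_0=I-P_\mu$ of the rank-one matrix, bound the resolvent $(\lambda I-M)^{-1}=\lambda^{-1}P_0+(\lambda-\mu)^{-1}P_\mu$, and close the case split using the classical Hilbert-space identity $\Vert I-P\Vert=\Vert P\Vert$ for a nontrivial idempotent. This bypasses the condition-number computation entirely and in fact yields the slightly sharper radius $2\kappa\Vert H\Vert$ with $\kappa=\Vert x\Vert\Vert y\Vert/|\mu|$, which is then absorbed into the stated $\varepsilon=2\kappa^2\Vert H\Vert$ via $\kappa\geqslant 1$; the price is that the argument genuinely hinges on that idempotent-norm identity (as you note, the crude bound $\Vert P_0\Vert\leqslant 1+\kappa$ does not close the constant when $\kappa$ is close to $1$), so you should cite or prove it, e.g.\ via the minimal-angle formula $\Vert P\Vert=1/\sin\theta$ between range and kernel, which is symmetric under $P\leftrightarrow I-P$. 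For part (2) both arguments are the same homotopy idea; the paper makes the "eigenvalues cannot jump between the two components" step rigorous through the argument principle applied to $p_s(z)=\det(M+sH-zI)$, and you should either invoke that or another precise form of continuity of the spectrum with multiplicity, but this is routine.
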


\begin{proof}[Proof of Proposition \ref{local_eigvals}]
Let $x,y$ be as in \eqref{def:x,y}. We apply Lemma \ref{quanti_bauer_fike} to the matrix $P^t = xy^\top + Q$. First of all, note that $\mu=\langle x, y \rangle = \langle P^t x, x/n\rangle = \langle x, x/n \rangle = 1$.  All the eigenvalues of $P^t$ lie in the union of the two balls $B(0, \varepsilon)$ and $B(\langle x, y \rangle, \varepsilon)$ where $\varepsilon$ is smaller than 
$$\frac{2\Vert x\Vert ^2\Vert y\Vert ^2}{\langle x, y\rangle^2} K_t = 2\Vert x\Vert ^2\Vert y\Vert ^2 K_t. $$

We clearly have $\Vert x\Vert  = \sqrt{n}$. We should now have a control over the norm of $y$. Note that  $\Vert y\Vert ^2 = \sum_{i=1}^n (\pi_0^\top P^t )_i^2$ where $\pi_0$ is the uniform measure over the vertices of the graph (i.e.  $\pi_0(v) = 1/n$); hence, $\pi_0^\top P^t$ can be interpreted as the distribution of the Markov chain after $t$ steps on the directed graph $G$ when started from a uniform vertex. In particular, for every $i$ the term $({\pi_0}^\top P^t)^2_i$ is equal to $\mathbf{P}(X_t = Y_t = i)$ when $X,Y$ are two independant Markov chains, each one being independently started from a uniform vertex. We will note $\mathbf{P}', \mathbf{E}'$ the probability and expectation of the Markov chain conditionnally on $G$. The overall term $\Vert y\Vert ^2$ is thus equal to $\mathbf{P}'(X_t = Y_t)$. An elegant argument from \cite{caputo_salez_bordenave} (see section 4) shows that 
\begin{equation}\label{annealed}
\mathbb{P}(X_t = Y_t) = O\left( \frac{\ln(n)^2}{n}\right) = o \left(\frac{\ln(n)^3}{n} \right)
\end{equation}
where $\mathbb{P}$ denotes the so-called \emph{annealed probability}, that is the probability according to both the environment and the walk: $\mathbb{P}(X_t = Y_t) = \mathbf{E}[\mathbf{P}'(X_t = Y_t)]$. Using the Markov inequality with $\mathbf{P}$, \eqref{annealed} yelds that with high probability, 
$$\Vert y\Vert  = \sqrt{\mathbf{P}(X_t = Y_t)} \leqslant \sqrt{\frac{\ln(n)^3}{n}}.$$

Finally, with high probability we have $\Vert x\Vert ^2 \Vert y\Vert ^2 \leqslant \ln(n)^3$, hence $\varepsilon \leqslant 2\ln(n)^3 K_t$. 

We now use the second part of Lemma \ref{quanti_bauer_fike}. To this end, we have to check that the two balls $B(0,\varepsilon)$ and $B(1, \varepsilon)$ are disjoint, at least when $n$ is big. It is easy to see that 
\begin{equation}
\varepsilon = O\left(\ln(n)^{D+3} (c\tilde{\rho})^t \right),
\end{equation}
see for instance the short computations on page \pageref{borne:kt} leading to \eqref{borne:kt}. As a consequence of \eqref{H}, we also get $\tilde{\rho}<1$ so if $c$ is close enough to $1$, then $c\tilde{\rho}<1$ and $\varepsilon$ goes to $0$ as $n$ goes to infinity. The two balls $B(0,\varepsilon)$ and $B(1, \varepsilon)$ are thus disjoint. Using the second point of Lemma \ref{quanti_bauer_fike}, exactly one eigenvalue of $P^t$ is inside the ball $B(1, \varepsilon)$ and this eigenvalue is obviously $1$ because $P^t$ is a transition matrix. All the other eigenvalues, and in particular $\lambda_2$, are in $B(0,\varepsilon)$. 
\end{proof}

\section{Expectation of a product of centered random variables.}\label{tech_section}
 
In this technical section, we present a method for obtaining upper bounds on the expectations of a product having the form $\prod_{s \in I} (\mathbf{1}_{E_s} - \mathbf{P}(E_s)) $ when the events $E_s$ are nearly independant for most of them, and strongly dependent for a few ones. The general setting is the same as before. Such expectations will appear in the proofs of Propositions \ref{control1} and \ref{control2}. 

For the sake of clarity in the following sections, we need a definition of ``potential paths", i.e. collections of half-edges that are not paths, but who could give rise to real paths in the graph. Those are called \emph{proto-paths}:

\begin{defin}\label{def:protopath}
A \emph{proto-path} is a sequence $\mathfrak{p} = (\mathbf{e}_1, \mathbf{f}_1, \dotsc,  \mathbf{e}_N, \mathbf{f}_N)$ with $N$ an integer, such that for every $s$ in $\{1, \dotsc,  N\}$, $\mathbf{e}_s$ is a head and $\mathbf{f}_s$ is a tail. 
\end{defin}

There is no restriction whatsoever on the half-edges of a proto-path. Indeed, a proto-path is meant to be a path in the graph $G$, but it is not necessarily a path: some half-edge could appear twice of more in $\mathfrak{p}$, there is no vertex-consistency statement.

We are interested in computing different probabilistic quantities depending on $\mathfrak{p}$, the simplest of them being the probability of the event ``for all $s$, the head $\mathbf{e}_s$ is matched with the tail $\mathbf{f}_s$".

\bigskip

Fix some integer $p$ smaller than $N$. Recall that $\underline{A}$ and $A$ had been defined in \eqref{nota:A}. We define a function $F_p$ by
$$F_p(\mathfrak{p}) = \mathbf{E}\left[ \prod_{s=1}^{p} \underline{A}(\mathbf{e}_{s}, \mathbf{f}_{s})   \prod_{s=p+1}^{N} A(\mathbf{e}_{s}, \mathbf{f}_{s})\right]$$
Most of the times, the index $p$ will be dropped and we will just note $F$. We introduce several useful definitions and notations.
\begin{itemize}
\item We will note $B(\mathbf{e}, \mathbf{f}) = \mathbf{1}_{\sigma(\mathbf{e}) = \mathbf{f}} - 1/M$ and $B'(\mathbf{e}, \mathbf{f}) = \mathbf{1}_{\sigma(\mathbf{e})=\mathbf{f}}$. This implies $\underline{A}(\mathbf{e}, \mathbf{f}) = B(\mathbf{e}, \mathbf{f})/d_\mathbf{e}^+$. 
\item An edge of $\mathfrak{p}$ is a couple $(\mathbf{e}_s, \mathbf{f}_s)$ appearing in $\mathfrak{p}$.
\item $a$ is the number of distinct edges appearing in the proto-path $\mathfrak{p}$: 
$$a = \#\{(\mathbf{e}_s, \mathbf{f}_s): 1\leqslant s \leqslant N\}.$$ We will denote those edges by $y_1, \dotsc,  y_a$. 
\item For each $i \in \{1, \dotsc,  a\}$,  the weight $w_i$ of edge $y_i$ is the number of times edge $y_i$ is visited by the proto-path before $p$ and $w'_i$ is the number of times edge $y_i$ is visited after $p$: 
$$w_i = \#\{s \leqslant p: (\mathbf{e}_s, \mathbf{f}_s )= y_i\} \qquad \qquad  w'_i = \#\{s > p: (\mathbf{e}_s, \mathbf{f}_s )= y_i\}.$$
\item If $y_i= (\mathbf{e}, \mathbf{f})$, we will note $B(y_i)$ or $\underline{A}(y_i)$ instead of $B(\mathbf{e}, \mathbf{f})$ or $\underline{A}(\mathbf{e}, \mathbf{f})$. 
\item The \emph{weight} of the proto-path $\mathfrak{p}$ is 
$$\omega(\mathfrak{p}) = \prod_{s=1}^N \frac{1}{d_{\mathbf{e}_s}^+}.$$
\item 
Call an edge $y_i$ \emph{consistent} if both of its end-half-edges appear only once in the proto-path $\mathfrak{p}$. Call an edge \emph{simple} if its weight is $1$. If an edge is not consistent, it is \emph{inconsistent}. 
If the edge $(\mathbf{e}, \mathbf{f})$ is inconsistent, there is another edge $(\mathbf{e}', \mathbf{f}')$ in the proto-path such that $\{\mathbf{e}, \mathbf{f}\} \cap \{\mathbf{e}', \mathbf{f}' \} \neq \emptyset$. 
\end{itemize}

The main result of this section is the following theorem.

\begin{theorem}\label{tech}
Let $\mathfrak{p}$ be any proto-path of length $N \leqslant \sqrt{M}$, $p$ an integer smaller than $N$, and let $a_1$ be the number of simple, consistent edges of $\mathfrak{p}$, before $p$. Also, let $b$ be the number of inconsistent edges of $\mathfrak{p}$. Then, for every $c>1$, there is an integer $n_0$ such that if $n$ is larger than $n_0$, we have  
\begin{equation}\label{tech:equation1}
|F(\mathfrak{p})| \leqslant 24 \cdot \omega(\mathfrak{p})3^b \left( \frac{c}{M} \right)^{a} \left(\frac{N}{\sqrt{M}} \right)^{a_1}.
\end{equation}
\end{theorem}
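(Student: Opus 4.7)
The plan is to first peel off the deterministic factor $\omega(\mathfrak{p})$ by writing $\underline{A}(\mathbf{e}, \mathbf{f}) = B(\mathbf{e}, \mathbf{f})/d_\mathbf{e}^+$ and $A(\mathbf{e}, \mathbf{f}) = B'(\mathbf{e}, \mathbf{f})/d_\mathbf{e}^+$, so that $F(\mathfrak{p}) = \omega(\mathfrak{p}) \cdot \mathbf{E}\left[\prod_{s \leq p} B(\mathbf{e}_s, \mathbf{f}_s) \prod_{s > p} B'(\mathbf{e}_s, \mathbf{f}_s)\right]$. I would then group factors by distinct edge: each of the $a$ distinct edges $y_i$ contributes $B(y_i)^{w_i} B'(y_i)^{w'_i}$. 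Setting $X_i := \mathbf{1}_{\sigma(\mathbf{e}_i)=\mathbf{f}_i}$ and using $X_i^k = X_i$ for $k \geq 1$, this collected factor reduces to an affine function of $X_i$: it equals $(1-1/M)^{w_i} X_i$ when $w'_i \geq 1$; it equals $X_i - 1/M$ when $w'_i = 0, w_i = 1$ (a simple edge before $p$); and it equals $(-1/M)^{w_i} + \gamma_i X_i$ with $|\gamma_i| \leq 1$ and $|(-1/M)^{w_i}| \leq M^{-2}$ when $w'_i = 0, w_i \geq 2$.

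Expanding the resulting product over the $a$ edges produces a sum $\sum_{T} \alpha_T \mathbf{E}[\prod_{i \in T} X_i]$ over subsets $T \subseteq \{1,\ldots,a\}$. The key probabilistic ingredient is the exact permutation identity $\mathbf{E}[\prod_{i \in T} X_i] = (M-|T|)!/M!$ whenever $T$ is pairwise consistent (no two edges share a half-edge), which is bounded by $(c/M)^{|T|}$ for any fixed $c > 1$ provided $|T| \leq N \leq \sqrt{M}$. If $T$ contains two edges sharing the same head with distinct tails (or the dual case), the expectation vanishes outright. The $b$ inconsistent edges cluster into equivalence classes linked through shared half-edges; each class admits only a bounded number of matching configurations compatible with $\sigma$, and enumerating them produces the factor $3^b$.

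The main obstacle is extracting the factor $(N/\sqrt{M})^{a_1}$ from the $a_1$ simple consistent edges before $p$: the naive triangle-inequality bound $|X_i - 1/M| \leq 2$ gives only $2^{a_1}$, whereas $(N/\sqrt{M})^{a_1} \leq 1$. One must therefore preserve the sum over subsets $U \subseteq \mathcal{S}$ of simple consistent edges and exploit its cancellation. Fixing the contributions $T_0$ from the other edges of the expansion (of size $n_0 \leq a - a_1$), the relevant sum is
$$\sum_{k=0}^{a_1} \binom{a_1}{k} (-1/M)^{a_1-k} \frac{(M-n_0-k)!}{M!}.$$
Factoring out $(M-n_0)!/M!$ and writing $\prod_{j=0}^{k-1}(M-n_0-j)^{-1} = (M-n_0)^{-k}(1 + O(k^2/M))$, the binomial theorem collapses the leading part to $\left((M-n_0)^{-1} - M^{-1}\right)^{a_1} = \left(n_0/(M(M-n_0))\right)^{a_1} \leq (2N/M^2)^{a_1}$, and the Taylor corrections can be bounded at the same order using $N \leq \sqrt{M}$. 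Multiplying by $(M-n_0)!/M! \leq (c/M)^{n_0}$ yields a contribution of at most $(c/M)^{n_0}(CN/M^2)^{a_1} \leq (c/M)^{a}(N/\sqrt{M})^{a_1}$. Combining with the $3^b$ factor, the $\omega(\mathfrak{p})$ prefactor, and the absolute constant $24$ yields the claim. The hardest step is thus the algebraic cancellation in the inclusion-exclusion sum for $\mathcal{S}$, requiring careful bookkeeping of the leading and correction orders in the permutation probabilities.
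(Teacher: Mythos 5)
Your reduction to the alternating sum over subsets of the $a_1$ simple consistent edges is the right object to study, but the way you estimate that sum contains a genuine gap, and it is precisely at the hardest point of the theorem. Your ``leading part'' $\bigl(\tfrac{1}{M-n_0}-\tfrac{1}{M}\bigr)^{a_1}=\bigl(\tfrac{n_0}{M(M-n_0)}\bigr)^{a_1}$ is not the dominant contribution: in the extreme (and typical) case $n_0=0$ it vanishes identically, while the true value of $\mathbf{E}\bigl[\prod_{i}(X_i-1/M)\bigr]$ over $a_1$ disjoint simple edges is of order $M^{-3a_1/2}$ up to $a_1$-dependent factors --- exactly the scale $(c/M)^{a_1}(N/\sqrt{M})^{a_1}$ the theorem asserts. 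So the whole answer lives in what you call the ``Taylor corrections'', and these cannot be ``bounded at the same order'' by putting absolute values on the expansion of $1/(M-n_0)_k$: doing so gives roughly
\begin{equation*}
\frac{C a_1^2}{M}\Bigl(\frac{1}{M}+\frac{1}{M-n_0}\Bigr)^{a_1}\approx \frac{C a_1^2\,2^{a_1}}{M}\,M^{-a_1},
\end{equation*}
which exceeds the target $(cN)^{a_1}M^{-3a_1/2}$ as soon as $a_1\geqslant 3$ and $N$ is much smaller than $\sqrt{M}$ (the relevant regime, $N=O(tm)=\mathrm{polylog}(n)$). The corrections themselves must be shown to cancel order by order (alternating binomial sums annihilating low-degree polynomials in $k$), and controlling the resulting series is a nontrivial estimate, not bookkeeping. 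This is exactly where the paper invokes Lemma 9 of \cite{bordenave2015}, namely $\bigl|\mathbf{E}\prod_{n=1}^{B}(zn-\tfrac1q)\bigr|\leqslant 4(r\sqrt{8zq})^{r}$ for a binomial $B$, after computing the exact conditional law of the number $H$ of unmatched simple edges given everything else; your sketch contains no substitute for that lemma, so the factor $(N/\sqrt{M})^{a_1}$ is not actually obtained.

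Two secondary points. First, your treatment of inconsistent edges (``each class admits only a bounded number of matching configurations \dots producing $3^b$'') is too vague to verify; in the paper the $3^b$ comes from a concrete recursion that eliminates one inconsistent edge at a time, splitting $F(\mathfrak{p})$ into at most three expectations over strictly shorter proto-paths whose edges are eventually all consistent, to which the consistent-case bound is then applied. Second, your global expansion with the exact identity $\mathbf{E}\bigl[\prod_{i\in T}X_i\bigr]=(M-|T|)!/M!$ (valid for pairwise consistent $T$, and $\leqslant (c/M)^{|T|}$ for $|T|\leqslant\sqrt{M}$) is a legitimate alternative to the paper's successive conditionings on $\mathcal{F}$, $\mathcal{G}_i$, $\mathcal{G}$, and could in principle be carried through; but then the entire difficulty concentrates in the alternating-sum cancellation discussed above, which as written is asserted rather than proved.
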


The proof of Theorem \ref{tech} is essentially technical and is a mere adaptation of \cite{bordenave2015}. The complete proof can be found in Appendix \ref{app:tech}.

\section{General strategy and definitions for the proof of Proposition \ref{control1}.}\label{strat}

In this section, we study the quantity $\Vert \underline{P}^{(t)}\Vert $ for the choice of $t = \lfloor \alpha \log_\Delta (n) \rfloor$ as in \eqref{def:t}. For the rest of the paper, we set
\begin{equation}\label{def:m}
m = \left\lfloor \frac{\ln(n)}{\A \ln \ln(n)} \right\rfloor.
\end{equation}

\subsection{A simplified version of Proposition \ref{control1}.}

In order to prove Proposition \ref{control1}, we are going to prove the following lemma. 

\begin{lem}Fix $t$ as in \eqref{def:t} and $m$ as in \eqref{def:m}. Fix $c$ close to $1$ and $\tilde{\rho} = \rho \vee \delta^{-1}$. When $n$ is large enough, we have
\begin{equation}\label{objectif_aux}
\mathbf{E} \big[ \Vert \underline{P}^{(t)}\Vert ^{2m}\big] = o(1)n^3 (c\tilde{\rho})^{2tm}.
\end{equation}
\end{lem}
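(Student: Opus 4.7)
The plan is to apply the trace method together with Theorem~\ref{tech}. Since $\underline{P}^{(t)}$ is a real matrix,
\[
\|\underline{P}^{(t)}\|^{2m} \leq \mathrm{tr}\bigl((\underline{P}^{(t)} (\underline{P}^{(t)})^\top)^m\bigr),
\]
so I would start by expanding the right-hand side as a combinatorial sum and pushing the expectation inside. Each entry $\underline{P}^{(t)}(i,j)$ is itself a sum over tangle-free paths of length $t$ weighted by products of the centered variables $\underline{A}$, so after expansion the trace becomes a sum over all ways of concatenating $2m$ tangle-free $t$-paths into a single closed proto-path $\mathfrak{P}$ of length $N = 2tm$, the $2m$ blocks alternating between ``forward'' and ``backward'' orientation. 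Taking expectations inside, Theorem~\ref{tech} bounds each resulting expectation by
\[
|F(\mathfrak{P})| \leq C\, \omega(\mathfrak{P}) \, 3^{b} \Bigl(\frac{c}{M}\Bigr)^{a} \Bigl(\frac{N}{\sqrt{M}}\Bigr)^{a_1},
\]
reducing \eqref{objectif_aux} to a weighted enumeration of closed concatenations by the parameters $(a, a_1, b)$.

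The next step, to be carried out in Sections~\ref{sec:combi} and \ref{sec:asymptotic_analysis}, is the combinatorial classification: I would group closed proto-paths $\mathfrak{P}$ according to the isomorphism type of their underlying multigraph $G(\mathfrak{P})$ together with the labelling of edges by order of first appearance and of subsequent revisits. Because each of the $2m$ sub-paths is tangle-free, every block contributes at most one independent cycle to $G(\mathfrak{P})$, and hence the Euler characteristic of $G(\mathfrak{P})$ is controlled by the number of inter-block ``gluings''. The expected number of proto-paths of a given topological type can then be bounded by $n^{v}\Delta^{O(a)}$, where $v$ is the number of vertices, and combined with $\omega(\mathfrak{P})\cdot M^{-a}$ this produces, via the definition of $\rho$, factors of order $\rho^{2tm}$ on the tree-like types.

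The asymptotic analysis then selects the dominant contribution: it comes from proto-paths whose underlying graph is a \emph{tree} ($b=0$ and all edges simple consistent), for which a telescoping computation involving $\sum_i d_i^-/d_i^+$ yields exactly $(\rho^2)^{tm}$ per closed trace cycle, matching the target $n\,(c\rho)^{2tm}$. The $\delta^{-1}$ appearing in $\tilde{\rho}$ enters through paths that traverse a single edge or a tight cycle many times, a scenario that cannot be ruled out and forces the spectral radius to be at least of order $\delta^{-t}$. All other topological types either contain an inconsistent edge (losing a factor $N/\sqrt{M}$ per extra gluing via the $(N/\sqrt{M})^{a_1}$ factor) or a surplus distinct edge (losing a factor $1/M$ per excess edge via $(c/M)^{a}$); in each case the combinatorial gain is strictly smaller than the analytic loss, and the collective contribution is $o(1)\cdot n^3(c\tilde{\rho})^{2tm}$.

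The main obstacle, and the real technical heart of the argument, is making this topological classification entirely explicit and verifying that for every non-tree type the loss from Theorem~\ref{tech} strictly beats the enumeration gain. This is precisely where the choice $m = \lfloor \ln(n)/(A \ln\ln(n))\rfloor$ matters: $m$ must be taken large enough so that the Markov inequality turning \eqref{objectif_aux} into Proposition~\ref{control1} yields the polylogarithmic $\ln(n)^D$ factor, yet small enough that the number of topological types remains subpolynomial in $n$ and the slack $n^3$ in the statement is sufficient to absorb all subdominant contributions as well as polynomial overheads in $t$ and $m$.
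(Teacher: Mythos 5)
Your overall architecture coincides with the paper's (the trace bound, the expansion over $\mathscr{C}_m$, Theorem \ref{tech} for the expectations, equivalence-class counting as in Section \ref{sec:combi}, then a sum over $v$ and $\chi$ as in Section \ref{sec:asymptotic_analysis}), and the roles you assign to $m$ and to $\sum_i d_i^-/d_i^+$ are right. The genuine gap is in your identification of the dominant terms and your bookkeeping of the factors in \eqref{tech:equation1}. The factor $(N/\sqrt{M})^{a_1}$ penalizes the \emph{simple consistent} edges, i.e.\ edges traversed exactly once: because the variables are centered, a once-traversed edge contributes essentially nothing. So the type you declare dominant --- a tree all of whose edges are simple and consistent, hence $a=2tm$ distinct edges and $v=2tm+1$ vertices --- is in fact negligible: it lies in the region $\mathcal{L}$ of \eqref{def:Lzone}, where the factor $(6tm/\sqrt{M})^{2(v-tm-1-\chi)}$ beats the $n^{v}$ enumeration gain. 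The true main contributions are of two kinds: (i) trees in which every edge is traversed at least twice ($\chi=0$, $a_1=0$, $v\approx tm+1$), for which $\sum_{\mathbf{i}}(c/M)^{v}\prod_{i\in\mathbf{i}} d_i^-/d_i^+ \leqslant (c\rho)^{2v}$ produces the $\rho^{2tm}$; and (ii) paths visiting few distinct vertices ($v$ much smaller than $tm$), whose only control is the weight bound of Lemma \ref{lem:weightp} with its $\delta^{-2(tm-v)}$ factor; these produce the $\delta^{-2tm}$ term and are the reason $\tilde{\rho}=\rho\vee\delta^{-1}$ appears at all. The paper absorbs both at once through the geometric sum $\delta^{-2tm}\sum_{v}(c\delta\tilde{\rho})^{2v}$, which is maximal at $v=tm+1$ precisely because $c\delta\tilde{\rho}\geqslant 1$.

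As a consequence, your verification plan (``for every non-tree type the analytic loss strictly beats the enumeration gain'') cannot be carried out as stated: for the doubled-tree and low-$v$ types there is no loss --- they \emph{are} the main term of order $(c\tilde{\rho})^{2tm}$ --- while your claimed main term is crushed by $(N/\sqrt{M})^{2tm}$. Two further misattributions go with this. Inconsistent edges are not charged through $a_1$: they only cost the factor $3^{b}$, and they are tamed by $b\leqslant 4\chi$ (Lemma \ref{b}), i.e.\ by the excess-edge count, not by the $(N/\sqrt{M})$ factor. And a surplus \emph{distinct} edge is not a net loss of $1/M$: when it opens a new vertex it brings an enumeration gain of order $n$, which is exactly the competition the $a_1$ factor must win in region $\mathcal{L}$, whereas when it closes a cycle the loss is realized through the $(Cn^{-\gamma})^{\chi}$ factor coming from the counting bound \eqref{combidefinitive}. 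Once the dominant types and these attributions are corrected, the rest of your outline is the argument of Sections \ref{sec:combi}--\ref{sec:asymptotic_analysis}.
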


\begin{proof}[Proof of Proposition \ref{control1} using \eqref{objectif_aux}]
For any constant $D$, 
\begin{align}
\mathbf{P}(\Vert \underline{P}^{(t)}\Vert  > \ln(n)^{D}(c\tilde{\rho})^t) &\leqslant \frac{\mathbf{E} \big[ \Vert \underline{P}^{(t)}\Vert ^{2m}\big]}{(\ln(n)^D)^{2m}(c\tilde{\rho})^{2tm}}\\
&\leqslant \frac{o(1) n^3 }{(\ln(n)^D)^{2m}}.\\
\end{align}
Now, the choice of $D = \A \times 3/2$ yields $\ln(n)^{2Dm} \sim n^3$, and $\mathbf{P}(\Vert \underline{P}^{(t)}\Vert  > \ln(n)^{D}(c\tilde{\rho})^t) = o(1)$.
\end{proof}

Before going further in the application of the trace method, we gather here some basic consequences of the choice $m = \Theta(\ln (n)/ \ln \ln(n))$ as in \eqref{def:m}. They will be used several times in the forthcoming analysis without necessary reference. 

\begin{lem}\label{asymptotic_lemma}
For any $m = \Theta\left( \frac{\ln(n)}{\ln \ln (n)} \right)$ and any $c_n>0$ such that $\ln(c_n) = o(\ln \ln (n))$ we have $(c_n)^m = n^{o(1)}$. In particular, for any constant $c>0$ we have $c^m = n^{o(1)}$.

For any $A>0$ and $m = \frac{A\ln(n)}{\ln \ln (n)}$ and any $t_n = O(\ln(n))$ we have $(t_n)^m \leqslant n^{A + o(1)}$.

For any $A>0$ and $m = \frac{A\ln(n)}{\ln \ln (n)}$ and any $t_n = O(\ln(n)^B)$ we have $(t_n)^m \leqslant n^{AB + o(1)}.$
\end{lem}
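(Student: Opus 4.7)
The plan is to prove each of the three statements by taking logarithms and reducing to simple algebra on $\ln n$, $\ln\ln n$ and the hypotheses on $c_n$, $t_n$. Everything will follow from the observation that a quantity $x_n$ is $n^{o(1)}$ if and only if $\ln x_n = o(\ln n)$, and more generally $x_n \leqslant n^{A + o(1)}$ if and only if $\ln x_n \leqslant A\ln n + o(\ln n)$.

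First I would handle (1). By assumption $m \leqslant C \ln(n)/\ln\ln(n)$ for some constant $C$ and for $n$ large. Then
\[
\ln(c_n^m) \;=\; m \ln(c_n) \;\leqslant\; \frac{C \ln(n)}{\ln\ln(n)} \cdot o(\ln\ln(n)) \;=\; o(\ln n),
\]
so $c_n^m = \exp(o(\ln n)) = n^{o(1)}$. The ``in particular'' statement is the special case $c_n = c$ constant, for which $\ln c = O(1) = o(\ln\ln n)$.

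For (2), write $t_n \leqslant K \ln(n)$ for some constant $K$ and $n$ large, so $\ln(t_n) \leqslant \ln\ln(n) + \ln K$. Then with $m = A\ln(n)/\ln\ln(n)$,
\[
\ln(t_n^m) \;=\; m \ln(t_n) \;\leqslant\; \frac{A\ln(n)}{\ln\ln(n)}\bigl(\ln\ln(n) + \ln K\bigr) \;=\; A\ln(n) + \frac{A\ln(K)\ln(n)}{\ln\ln(n)},
\]
and the second term is $o(\ln n)$. Hence $t_n^m \leqslant n^{A+o(1)}$. The argument for (3) is identical: if $t_n \leqslant K(\ln n)^B$ then $\ln(t_n) \leqslant B\ln\ln(n) + \ln K$, so
\[
\ln(t_n^m) \;\leqslant\; \frac{A\ln(n)}{\ln\ln(n)}\bigl(B\ln\ln(n) + \ln K\bigr) \;=\; AB\ln(n) + o(\ln n),
\]
giving $t_n^m \leqslant n^{AB + o(1)}$.

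There is no real obstacle here: these are purely routine log-asymptotic estimates, the only subtlety being to keep track of which hypotheses are needed (namely, an upper bound on $m$ for (1), and the specific choice $m = A\ln(n)/\ln\ln(n)$ to pin down the leading constants $A$ and $AB$ in (2) and (3)).
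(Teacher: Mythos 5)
Your proof is correct: the paper states Lemma \ref{asymptotic_lemma} without proof, treating it as a routine asymptotic fact, and your argument of taking logarithms and bounding $m\ln(c_n)$, $m\ln(t_n)$ by $o(\ln n)$, $A\ln n + o(\ln n)$, $AB\ln n + o(\ln n)$ respectively is exactly the intended routine verification. The only cosmetic point is that in part (1) one should work with $|\ln(c_n)|$ (since $c_n<1$ is allowed and $n^{o(1)}$ is a two-sided statement), which changes nothing in the argument.
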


\subsection{Use of the classical trace method.}\label{subsec:trace}

The proof of \eqref{objectif_aux} relies on the trace method. To somewhat lighten the notations, we will note $X = \underline{P}^{(t)}$ in this paragraph. From now on we will choose an even integer $r=2m$, so that $\Vert X \Vert^{2m} = \Vert X^* X \Vert^m $. As $X^* X$ is symmetric, we have 
\begin{align}
\Vert X\Vert^{2m} \leqslant \mathrm{tr}\big( (X^* X)^m \big) &= \sum_{i_1, \dotsc,  i_m} \prod_{s=1}^m (X^* X)_{i_s, i_{s+1}}\\
&= \sum_{i_1, i_2, \dots,  i_{2m}} \prod_{s=1}^m X_{i_{2s-1}, i_{2s}}X_{i_{2s+1}, i_{2s}}
\end{align}
where we adopted the cyclic notation $i_{m+1} = i_1$ in the first line and $i_{2m+1} = i_1$ in the second line. With $\underline{P}^{(t)}$ this becomes
\begin{equation}\Vert\underline{P}^{(t)}\Vert^{2m} \leqslant \sum_{i_1, \dots,  i_{2m}} \prod_{s=1}^m \underline{P}^{(t)} (i_{2s-1}, i_{2s}) \underline{P}^{(t)}(i_{2s+1}, i_{2s}). \end{equation}
Developping according to the definition of $\underline{P}^{(t)}$, we get
\begin{align}\prod_{s=1}^m X_{i_{2s-1}, i_{2s}}X^*_{i_{2s+1}, i_{2s}}&= \sum_{\mathbf{p}_1 \in \mathscr{T}^t (i_1, i_2)} \sum_{\mathbf{p}_2 \in \mathscr{T}^t (i_3, i_2)} \dotso\sum_{\mathbf{p}_{2m} \in \mathscr{T}^t (i_1, i_{2m} )} \prod_{i=1}^{2m} \prod_{s=1}^t \underline{A}(\mathbf{e}_{i,s}, \mathbf{f}_{i,s})    
\end{align}
where we noted $\mathbf{p}_i = (\mathbf{e}_{i,s}, \mathbf{f}_{i,s})_{s \leqslant t}$ the $i$-th path in the ``path of paths" $\mathbf{p} = (\mathbf{p}_1, \dotsc,  \mathbf{p}_{2m})$ (remember the concatenation notation \ref{nota:conca}). We define $\mathscr{C}_m$ as the set of  ``paths of paths" corresponding to the sum, that is $2m$-tuples $(\mathbf{p}_1, \dotsc,  \mathbf{p}_{2m})$ such that $\mathbf{p}_1$ and $\mathbf{p}_2$ have the same endpoint, $\mathbf{p}_2$ and $\mathbf{p}_3$ have the same beginning point, and so on. For the following analysis, it will be easier to ``reverse" all odd paths in $\mathbf{p}$, leading to the following central definition:

\begin{figure}[H]\centering
\input{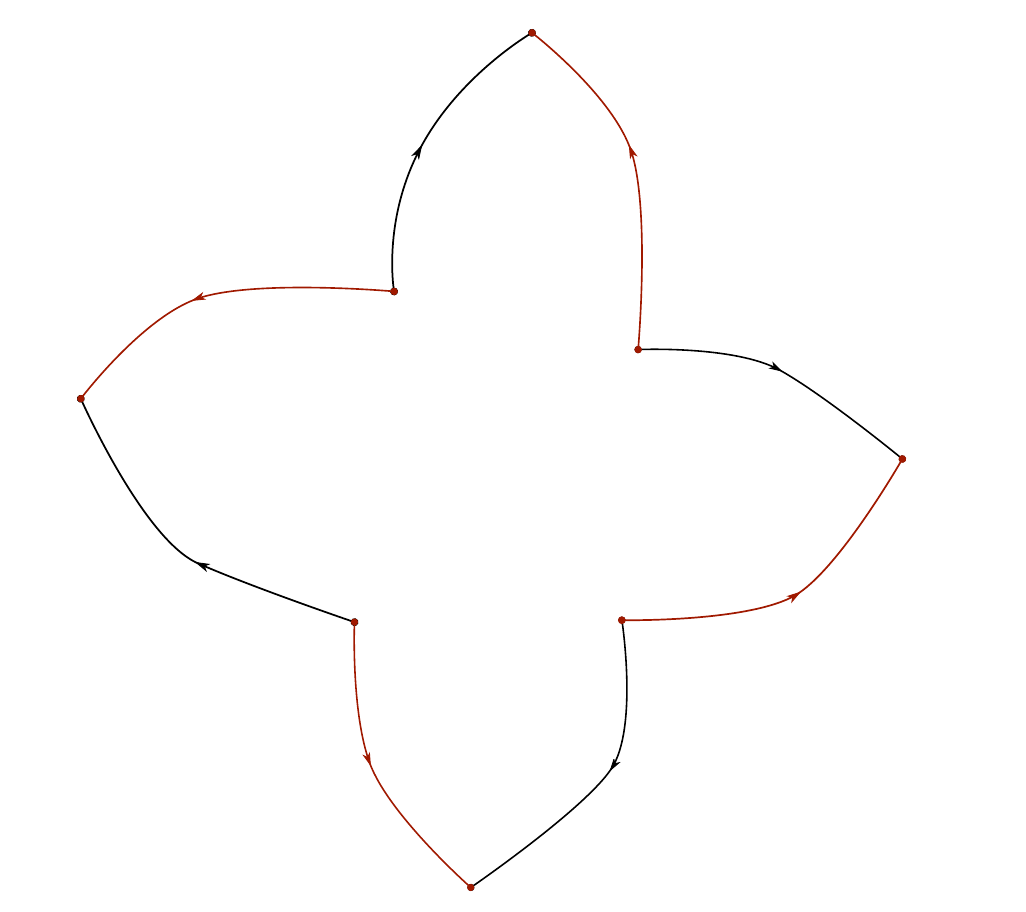_tex }
\caption{A path in $\mathscr{C}_{4}$. The red paths are the "odd" paths, corresponding to "reversed tangle-free paths". The black ones are "even" paths. }
\end{figure}

\begin{defin}$\mathscr{C}_m$ is the set of $2m$-tuples $\mathbf{p} = (\mathbf{p}_1, \dotsc,  \mathbf{p}_{2m})$ such that  
\begin{itemize}
\item for every $i$, the path $\mathbf{p}_{2i-1}$ is in $\mathscr{T}^t$ and the ``reversed path" 
$$\bar{\mathbf{p}}_{2i} = (\mathbf{f}_{2i, t}, \mathbf{e}_{2i, t}, \dotsc,  \mathbf{f}_{2i, 1}, \mathbf{e}_{2i, 1} )$$
is in $\mathscr{T}^t$. 
\item For every $i$, the last half-edge of $\mathbf{p}_i$ and the first half-edge of $\mathbf{p}_{i+1}$ are  attached to the same vertex (boundary condition).
\end{itemize}
\end{defin}
Note that there is a little lack of consistency with our convention that $\mathbf{e}$ denotes heads and $\mathbf{f}$ denotes tails, for in this case $\mathbf{e}_{2i, s}$ denotes a tail and $\mathbf{f}_{2i, s}$ denotes a head. For every element $\mathbf{p} \in \mathscr{C}_m$, we note
\begin{equation}\label{def:f}f(\mathbf{p}) = \mathbf{E}\left[ \prod_{i=1}^{m} \prod_{s=1}^t \underline{A}(\mathbf{e}_{2i-1,s}, \mathbf{f}_{2i-1,s})\prod_{s=1}^t \underline{A}(\mathbf{f}_{2i,s}, \mathbf{e}_{2i,s})\right].\end{equation}

We have obtained the following fundamental inequality: 
\begin{equation}\label{FUNDINEQ}\mathbf{E}\left[ \Vert\underline{A}^{(t)}\Vert^{2m}  \right] \leqslant \sum_{\mathbf{p} \in \mathscr{C}_m} |f(\mathbf{p})|.
\end{equation}

In the last expression, the probabilistic part, which is contained in the function $f$, is entirely decoupled from the combinatoric part, which is contained in the set $\mathscr{C}_m$. Both parts will be separately treated in the forthcoming analysis.

\subsection{Geometry of paths in $\mathscr{C}_m$.}\label{geometry}

We now introduce some definitions that will be commonly used in the sequel. Let $\mathbf{p}$ be any element in $\mathscr{C}_m$. It induces a walk on the vertices of the graph $G$. We will note $V(\mathbf{p})$ (or generally $V$ if there is no ambiguity) the set of all visited vertices, and $v = v(\mathbf{p}) = \# V(\mathbf{p})$. Any $\mathbf{p} \in \mathscr{C}_m$ is composed of $2m$ path of length $t$, hence we have $v \leqslant 2tm$. 

\begin{defin}\label{def:edges}
We had already defined an \emph{edge of $\mathbf{p}$} as any pair of a head followed by a tail appearing in one of the $\mathbf{p}_i$'s (for example $(\mathbf{e}_{1,s}, \mathbf{f}_{1,s} )$ or $(\mathbf{f}_{2, s}, \mathbf{e}_{2,s})$) A \emph{graph edge} is the corresponding (oriented) edge between vertices. 
\end{defin}

\begin{exemple}
Let $(\mathbf{e}, \mathbf{f} )$ be an edge of $\mathbf{p}$, with $\mathbf{e}$ a head and $\mathbf{f}$ a tail. If $\mathbf{e}$ is attached to vertex $u$ and $\mathbf{f}$ to vertex $u'$, then the corresponding graph edge will be $(u,u')$. Thus, each graph-edge $(u,v)$ corresponds to at most $d_u^+ d_{u'}^-$ distinct edges. 
\end{exemple}

We will note $E(\mathbf{p})$ the set of edges. The total number of distinct edges will be noted $a = a(\mathbf{p}) = \#E(\mathbf{p})$.  Any $\mathbf{p} \in \mathscr{C}_m$ induces an oriented multigraph on the set of vertices $V(\mathbf{p})$: its edges are just the graph edges of $\mathbf{p}$, counted with multiplicities. Let us call $\vec{G}(\mathbf{p})$ this oriented multigraph; the corresponding \emph{unoriented multigraph} $G(\mathbf{p})$ is connected. We will note $\chi = \chi (\mathbf{p}) = a-v+1 $ the \textbf{tree excess} of $G(\mathbf{p})$. This quantity will be used many times in the sequel.

\section{Combinatorics of \texorpdfstring{$\mathscr{C}_m$}{}.}\label{sec:combi}

We split $\mathscr{C}_m$ in various disjoints subsets, taking into account the number of visited vertices and also the number of edges. The counting argument is inspired from \cite{bordenave2015} which itself stems from the seminal paper \cite{komlos}. 

\begin{defin}Let $a,v$ be integers and let $\mathbf{i}=(i_1, \dotsc,  i_v)$ be a $v$-tuple of vertices. We define
$$ X^{v,a}_m(\mathbf{i}) = X_m^{v,a}(i_1, \dotsc,  i_v)$$
as the set of all the elements in $\mathscr{C}_m $ whose vertex set is precisely $(i_1, ..., i_v)$ (visited in this order) and who have $a$ edges. 
\end{defin}

The aim of this section is to prove the following result on the number of elements in $X^{v,a}_m(\mathbf{i})$. 

\begin{prop}\label{prop:combidefinitive}
Fix $v,\mathbf{i}$ and $a$. Recall that $\chi = a-v+1$. Then, there is a constant $C>0$ and an integer $n_1$ such that for every $n \geqslant n_1$, we have
\begin{equation}\label{combidefinitive}
\# X^{v,a}_m(\mathbf{i}) \leqslant  \left( \prod_{i \in \mathbf{i}} d_{i}^+ d_i^- \right) C^\chi n^{\frac{25}{\A} + \frac{17}{\A}  \chi} .
\end{equation}
\end{prop}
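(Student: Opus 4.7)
The plan is to apply a Komlós–Bordenave style encoding argument in the spirit of \cite{bordenave2015}, adapted to tangle-free walks on the directed configuration model. I would view each element $\mathbf{p} \in X^{v,a}_m(\mathbf{i})$ as a sequence of $2mt$ half-edge transitions partitioned into $2m$ tangle-free sub-paths of length $t$, with vertex set ordered as $\mathbf{i} = (i_1, \ldots, i_v)$ and with exactly $a$ distinct graph-edges. The underlying multigraph $G(\mathbf{p})$ has tree-excess $\chi = a - v + 1$, and the goal is to encode $\mathbf{p}$ by a collection of discrete choices whose total count matches the claimed bound.

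To set up the encoding, I would classify each of the $2mt$ steps as either a \emph{tree step} (revealing a new vertex, of which there are $v - 1$), an \emph{excess step} (using a new graph-edge to a vertex already visited, of which there are exactly $\chi$), or a \emph{repeat step} (using a graph-edge already in $G(\mathbf{p})$). Specifying $\mathbf{p}$ then amounts to specifying: (a) the positions of the $\chi$ excess steps among the $2mt$ total steps, contributing at most $(2mt)^\chi$; (b) for each excess step, the target (already-visited) vertex, contributing at most $v^\chi \leq n^\chi$; (c) at each of the $2m-1$ sub-path junctions a choice of head at the shared vertex, plus a head/tail choice at each excess or tree step, all of which can be collectively absorbed into $\prod_{i \in \mathbf{i}} d_i^+ d_i^-$ up to a constant factor $\Delta^{O(1)}$ per sub-path; and (d) for repeat steps inside a sub-path, the tangle-free constraint allows at most one cycle in the sub-path, leaving only a bounded number of admissible continuations at each such step. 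Multiplying these contributions and applying the asymptotic identity $(\ln n)^m = n^{1/A}$ (valid for $m = \lfloor \ln n / (A \ln \ln n)\rfloor$) converts the polylogarithmic factors into powers of $n^{1/A}$, yielding a bound of the advertised form $\prod_{i \in \mathbf{i}} d_i^+ d_i^- \cdot C^\chi n^{25/A + 17\chi/A}$.

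The main obstacle will be the careful bookkeeping between the global constraint (the full walk visits exactly $v$ vertices and uses exactly $a$ edges) and the local tangle-free constraint within each sub-path. In particular, one needs to ensure that the combined cost of the $2m$ sub-path junctions and of the $\chi$ excess positions accounts for every degree of freedom without double-counting, and that the specific exponents $25$ and $17$ of $1/A$ are matched by the per-junction and per-excess enumerations. This is analogous to the profile-based analysis of Section 6 in \cite{bordenave2015}, but must be re-done to reflect that our walks are concatenations of $2m$ tangle-free segments rather than a single non-backtracking walk, and that tangles can also appear at segment junctions (which must be encoded as excess steps rather than absorbed into the tangle-free structure of a single sub-path).
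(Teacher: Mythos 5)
Your two-stage architecture --- absorb the half-edge choices into $\prod_{i\in\mathbf{i}}d_i^+d_i^-$, then count the remaining combinatorial ``shapes'' by an encoding of the walk --- is the same as the paper's, which formalises the first stage as counting elements inside an equivalence class (Lemma \ref{lem:cardinal_equiv_classes}, where the correction factor is $C^\chi$ with $C=(\Delta-1)^2$, not $\Delta^{O(1)}$ per sub-path) and the second as counting classes (Lemma \ref{prop:counting}). The genuine gap is in your encoding: recording only the $\chi$ excess steps (their positions and target vertices) does not determine the walk. An element of $\mathscr{C}_m$ may traverse excess edges many times: inside one tangle-free segment it can go around its unique cycle repeatedly, and later segments can re-enter and travel through territory already explored by earlier segments, and at \emph{every} such departure from the spanning tree the decoder needs new information. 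This is why the paper budgets up to $\chi$ long cycling times in \emph{each} of the $2m$ segments --- of order $m\chi$ marks in total, which is precisely the source of the factor $n^{\frac{17}{\A}\chi}$ --- plus one short cycling time per segment whose mark records the total time spent in the loop (this is what handles the repeated traversals, the ``superfluous'' times); and, crucially, every mark also records the next vertex at which the walk will leave the tree, so that in between the walk follows the unique tree path at zero coding cost. Your item (d) cannot replace this device: even two admissible continuations at each repeat step would cost $2^{\Theta(mt)}=n^{\Theta(\ln n/\ln\ln n)}$, which swamps the target $n^{\frac{25}{\A}+\frac{17}{\A}\chi}$.

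A second, more mechanical issue: the relaxation $v^\chi\leqslant n^\chi$ in your step (b) is too lossy to be usable downstream. The exponent $\frac{17}{\A}\chi$ in the statement is exactly what Section \ref{sec:asymptotic_analysis} needs: after multiplying by the $(C/M)^\chi$ coming from Proposition \ref{regions}, the sum over $\chi$ must be geometric with ratio $O(n^{\frac{17}{\A}-1})\to 0$; with a factor $n^\chi$ in the count this ratio is of order one and the bound on $\mathcal{H}_1$ breaks down. The repair is to keep $v\leqslant 2tm$ (polylogarithmic) throughout and to convert bounds of the form $(2\Delta tm)^{O(m\chi)+O(m)}$ into powers $n^{O(1)\chi/\A+O(1)/\A}$ via Lemma \ref{asymptotic_lemma}, which is how the paper obtains the exponents $\frac{25}{\A}$ and $\frac{17}{\A}$.
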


The core tool for the proof of \eqref{combidefinitive} will be a simple partition of the elements of $\#X^{v,a}_m(\mathbf{i})$ with the following notion of equivalence:

\begin{defin}
Let $\mathbf{p}$ and $\mathbf{p}'$ be two elements in $\mathscr{C}_m$; we note $\mathbf{e}_{i,s}, \mathbf{f}_{i,s}$ the half-edges of $\mathbf{p}$ and $\mathbf{e}'_{i,s}, \mathbf{f}'_{i,s}$ those of $\mathbf{p}'$. They are said \textbf{equivalent} if 
\begin{itemize}
\item they both belong to $X^{a,v}_m(\mathbf{i})$ and they visit the same vertices at the same time, 
\item for every vertex $u \in \mathbf{i}$, there are two permutations $\sigma_u  \in \mathfrak{S}_{d^+_u}$ and $\tau_u \in \mathfrak{S}_{d_u^-}$ such that for every $i$ and $s$, if $\mathbf{e}_{i,s}$ is a head attached to $u$ and $\mathbf{f}_{i,s}$ a tail attached to $u$, then
$$\mathbf{e}_{i,s} = \sigma_u (\mathbf{e}'_{i,s}) \quad \text{and} \quad \mathbf{f}_{i,s} = \tau_u (\mathbf{f}'_{i,s}). $$
\end{itemize}

In other words, two elements of $\mathscr{C}_m$ are equivalent if they only differ by a permutation of their half-edges. 
\end{defin}

The proof is organized as follows:

\begin{itemize}
\item In \ref{Subsection:cardinal}, we prove an upper bound for the number of elements within each equivalence class. 
\item In \ref{Subsection:number}, we prove an upper bound for the number of equivalence classes. 
\item In \ref{Subsection:total} we prove Proposition \ref{prop:combidefinitive}. 
\end{itemize}

\subsection{Cardinal of equivalence classes}\label{Subsection:cardinal}

Let $\mathbf{p}$ be an element of $X^{v,a}_m (\mathbf{i})$. How many elements of $\mathscr{C}_m$ are equivalent to $\mathbf{p}$ ? The vertices are fixed so there is no choice from this part. We have to chose the half-edges. If there is exactly one tail and one head attached to each of these vertices, we would have $d^+_{i_1}$ choices for the first head, then $d^-_{i_2}$ for the first tail, and so on until the last head with $d^+_{i_v}$ choices and the last tail with $d_1^-$ choices. Thus, we have at most $ \prod_{i \in \mathbf{i}} d^+_{i}d^-_{i}$ paths equivalent with $\mathbf{p}$ in this case. In the general case, there are some vertices with \emph{more} than one half-edge visited by $\mathbf{p}$ attached to these vertices.

\begin{lem}\label{lem:cardinal_equiv_classes}
Let $\mathbf{p}$ be in $X^{v,a}_m (\mathbf{i})$. Note $\alpha_s$ the number of heads visited by $\mathbf{p}$ attached to the vertex $i_s$, and let $\beta_s$ be the same with tails. Then, we have at most
\begin{equation}\label{bound_equiv_card}
C^\chi \prod_{i \in \mathbf{i}} d^+_{i}d^-_{i}
\end{equation}
elements in $\mathscr{C}_m$ equivalents to $\mathbf{p}$, where $C>0$ is a constant.
\end{lem}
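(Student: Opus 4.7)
My plan is to view the equivalence class of $\mathbf{p}$ as the orbit of $\mathbf{p}$ under the natural action of $\prod_{u \in \mathbf{i}} \mathfrak{S}_{d_u^+}\times \mathfrak{S}_{d_u^-}$ on half-edges, then to factor one copy of $d_u^+ d_u^-$ per vertex out of the orbit size and absorb what remains into $C^\chi$. Two permutation pairs produce the same $\mathbf{p}'$ iff they agree on the half-edges actually used by $\mathbf{p}$, so the stabiliser of $\mathbf{p}$ at a vertex $u$ consists exactly of the permutations fixing every used head and every used tail at $u$, and has cardinality $(d_u^+-\alpha_u)!(d_u^--\beta_u)!$. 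By the orbit-stabiliser theorem, the number of elements of $\mathscr{C}_m$ equivalent to $\mathbf{p}$ is therefore
\begin{equation*}
\prod_{u \in \mathbf{i}} \frac{d_u^+!}{(d_u^+-\alpha_u)!}\cdot \frac{d_u^-!}{(d_u^--\beta_u)!} \;\leq\; \prod_{u \in \mathbf{i}} (d_u^+)^{\alpha_u}(d_u^-)^{\beta_u}.
\end{equation*}

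The key observation I will use is that every element of $\mathscr{C}_m$ traces out a \emph{closed} walk: the matching conditions on consecutive sub-paths, together with the cyclic identification built into the trace, glue the sub-paths into a single closed trajectory $i_1\to i_2\to\cdots\to i_{2m}\to i_1$, so every visited vertex $u$ is exited at least once through a head and entered at least once through a tail. In particular $\alpha_u\geq 1$ and $\beta_u\geq 1$ for every $u\in\mathbf{i}$, and factoring one degree out of each factor yields
\begin{equation*}
\prod_{u \in \mathbf{i}} (d_u^+)^{\alpha_u}(d_u^-)^{\beta_u} \;\leq\; \Bigl(\prod_{u\in\mathbf{i}} d_u^+d_u^-\Bigr)\cdot\Delta^{(h-v)+(\ell-v)},
\end{equation*}
where $h=\sum_u\alpha_u$, $\ell=\sum_u\beta_u$, and I use $d_u^\pm\leq\Delta$ from hypothesis~\eqref{H} on the non-negative exponents $\alpha_u-1$ and $\beta_u-1$.

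To conclude, I note that each distinct edge of $\mathbf{p}$ contributes one head at its source and one tail at its target, so $h\leq a$ and $\ell\leq a$. Hence $(h-v)+(\ell-v)\leq 2(a-v)=2(\chi-1)\leq 2\chi$, and the orbit size is bounded by $\Delta^{2\chi}\prod_u d_u^+d_u^- = C^\chi\prod_u d_u^+d_u^-$ with $C=\Delta^2$, exactly as claimed. I expect the only subtle step to be the closed-walk observation, which requires carefully tracking the half-edge types across the alternation between odd sub-paths of $\mathscr{T}^t$ and the reversed even sub-paths; without the guarantee $\alpha_u,\beta_u\geq 1$, one would have to work with $h-h^*$ and $\ell-\ell^*$, which cannot be compared cleanly to $\chi$. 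Once the closed-walk property is in hand, the rest is pure degree bookkeeping.
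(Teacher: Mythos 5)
Your orbit--stabiliser count and the final bookkeeping follow the same route as the paper (class size $\prod_u (d_u^+)(d_u^+-1)\cdots(d_u^+-\alpha_u+1)\,(d_u^-)\cdots(d_u^--\beta_u+1)$, extract one factor $d_u^+d_u^-$ per vertex, charge the remainder to a power of $\Delta$ controlled by $\chi$), but the step you yourself single out as the crux --- the closed-walk observation --- is false. An element of $\mathscr{C}_m$ is \emph{not} a consistently oriented closed walk: by the definition of $\mathscr{C}_m$ the even blocks are genuine tangle-free directed paths traversed backwards (this is exactly what the expansion of $\mathrm{tr}\big((X^*X)^m\big)$ produces), so at the junction shared by $\mathbf{p}_{2i-1}$ and $\mathbf{p}_{2i}$ the two used half-edges are both tails, while at the junction shared by $\mathbf{p}_{2i}$ and $\mathbf{p}_{2i+1}$ they are both heads. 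A junction vertex that is not revisited in the interior of some block therefore has $\alpha_u=0$ or $\beta_u=0$, and your second display breaks down there, since vertex-by-vertex it would require $1\leqslant d_u^+/\Delta$. Concretely, let the $2m$ blocks be $m$ simple length-$t$ paths from a common vertex $x$ to distinct endpoints, with pairwise disjoint interiors, each path immediately retraced by the following (reversed) block, the $m$ outgoing blocks using $m\leqslant d_x^+$ distinct heads at $x$: then $a=mt$, $v=mt+1$, $\chi=0$, yet the equivalence class has cardinality $d_x^+(d_x^+-1)\cdots(d_x^+-m+1)$ times the product of the degrees actually used at the other vertices, and its ratio to $\prod_u d_u^+d_u^-$ is of order $(\Delta/\delta)^{m}$ when $d_x^+=\Delta$ and the compensating degrees equal $\delta$. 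So the bound cannot be reached by your argument as written, and the guarantee $\alpha_u,\beta_u\geqslant 1$ you rely on simply does not hold.

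The damage is limited, and the repair is essentially the alternative you sketch at the end. Only junction (boundary) vertices can have $\alpha_u=0$ or $\beta_u=0$, and there are at most $2m$ of them; replacing the exponents $\alpha_u-1,\beta_u-1$ by $(\alpha_u-1)_+,(\beta_u-1)_+$ and using $h\leqslant a$, $\ell\leqslant a$ gives a class size at most $C^{\chi}\,\Delta^{2m}\prod_u d_u^+d_u^-$, and $\Delta^{2m}=n^{o(1)}$ by Lemma \ref{asymptotic_lemma}, which is just as good in every later use of the lemma (the same device of setting aside at most $2m$ boundary vertices is used in the proof of Lemma \ref{lem:weightp}). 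For your comfort: the paper's own proof is silent on exactly this point --- its identity $\sum_t K_t^+=v$ tacitly assumes $\alpha_i\geqslant 1$ at every visited vertex --- so your instinct that this was the delicate step was correct; the resolution, however, is that the exceptional vertices are few (at most $2m$, costing an innocuous $n^{o(1)}$ factor), not that they do not exist.
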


\begin{figure}[H]\centering
\includegraphics[scale=1]{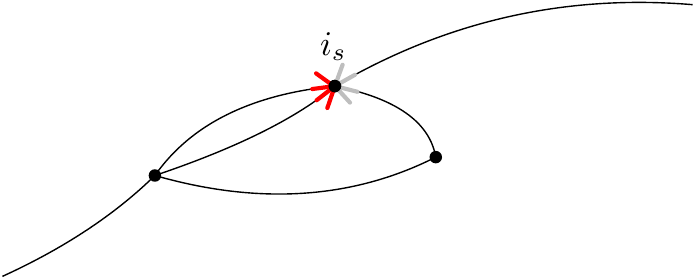}
\caption{Here, we have $d_{i} = 4$, but $\alpha_i = 2$ and $\beta_i = 2$.}
\end{figure}

In the proof we will make use of the Pocchammer symbol: if $a$ is a real number and $k$ and integer, then $(a)_k = a(a-1)...(a-k+1)$. 

\begin{proof}Fix $\mathbf{p}$. When choosing equivalent elements to $\mathbf{p}$, we have \emph{at most}
$$ \prod_{i \in \mathbf{i}} (d_i^+)_{\alpha_i} (d_{i}^-)_{\beta_i} = \prod_{i \in \mathbf{i}}d_i^+d_i^- \prod_{i \in \mathbf{i}} (d_i^+-1)_{\alpha_i-1} (d^-_{i}-1)_{\beta_i-1} $$
choices, with the convention that a product over an empty set is equal to $1$. We also have $(d^+_{i}-1)_{\alpha_i-1} \leqslant(\Delta-1)^{\alpha_i - 1}$ and $(d_{i}^--1)_{\beta_i-1} \leqslant(\Delta-1)^{\beta_i - 1}$, so if we set $K^+_t = \# \{i \in \mathbf{i}: \alpha_i = t\}$ and $K^-_t = \# \{i \in \mathbf{i}: \beta_i = t\}$ we have 
\begin{align*}
\prod_{i \in \mathbf{i}} (d^+_{i}-1)_{\alpha_i-1} (d^-_{i}-1)_{\beta_i-1}&= \prod_{t \geqslant 1}\prod_{i \in K^+_t} (\Delta-1)^{t-1}\prod_{i \in K^-_t} (\Delta-1)^{t-1}\\
&\leqslant \prod_{t \geqslant 1} (\Delta - 1)^{(t-1)(K^+_t + K^-_t)}\\
&\leqslant (\Delta-1)^{\sum_{t\geqslant 1} (t-1)K^+_t + (t-1)K^-_t}.
\end{align*}

Counting edges going out of every vertex yields $\sum_t t K^+_t = a$ and counting vertices according to the number of edges going out this vertex gives $\sum_t K^+_t = v$ (the same holds for $K^-_t$), so we get $\sum_{t\geqslant 1}(t-1)K^+t = \sum_{t\geqslant 1}(t-1)K^-_t = a-v$, and
$$\prod_{i \in \mathbf{i}} (d_{i}^+-1)_{\alpha_i-1} (d_{i}^--1)_{\beta_i-1}  \leqslant (\Delta-1)^{2(a-v)} \leqslant C^\chi$$
where $C = (\Delta-1)^2$, thus closing the proof of \eqref{bound_equiv_card}.
\end{proof}

\subsection{Number of equivalence classes.}\label{Subsection:number}

Now, we count the number of equivalence classes in $X^{a,v}_m(\mathbf{i})$. The result of this paragraph is:

\begin{lem}\label{prop:counting}
There is an integer $n_1$ such that for every $n \geqslant n_1$, the total number of equivalence classes of paths in $\mathscr{C}_m$ visiting vertices $(i_1, \dotsc, i_v)$ and having $a$ edges is bounded by
\begin{equation}\label{counting}
n^{\frac{25}{\A}+\frac{17}{\A}  \chi}.
\end{equation}
\end{lem}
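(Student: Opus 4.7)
The plan is to build an injective encoding of each equivalence class in $X_m^{v,a}(\mathbf{i})$ into a combinatorial set whose cardinality is at most $n^{25/\A+17\chi/\A}$. Given a representative $\mathbf{p}$, view the concatenation $(\mathbf{p}_1, \bar{\mathbf{p}}_2, \dotsc, \bar{\mathbf{p}}_{2m})$ as a walk of length $2tm$ on the induced multigraph $G(\mathbf{p})$, which by hypothesis has $v$ vertices, $a$ edges, and tree excess $\chi = a-v+1$. Because the vertex sequence $\mathbf{i}=(i_1,\dotsc,i_v)$ is prescribed, every step of the walk falls in exactly one of three categories: a \emph{tree step} (first use of a graph edge whose endpoint is a not-yet-visited vertex, of which there are precisely $v-1$), an \emph{excess step} (first use of a graph edge whose endpoint is already visited, of which there are precisely $\chi$), or a \emph{repeat step}.

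I would then handle each category. The $v-1$ tree steps are forced by $\mathbf{i}$ and carry almost no information. For the $\chi$ excess steps, encoding their positions among the $2tm$ time-steps costs at most $(2tm)^\chi$ and encoding their target vertex costs another $v^\chi\leqslant (2tm)^\chi$. The repeat steps are the delicate point, and this is where the tangle-free hypothesis enters in a decisive way: since each $\mathbf{p}_i$ is tangle-free one has $a(\mathbf{p}_i)\leqslant v(\mathbf{p}_i)$, so $G(\mathbf{p}_i)$ is either a tree or a unicyclic graph. On a tree, every repeat step is \emph{forced} (the walker can only backtrack along the unique parent edge), and only a bounded number of ``non-trivial'' decisions per sub-path can be incurred by the at-most-one cycle or by the entry/exit data of the sub-path. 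Following the strategy of Komlós and Bordenave~\cite{bordenave2015}, I would make this precise by defining \emph{important times} inside each sub-path, showing that the total number of important times across the $2m$ sub-paths is at most $c_1 m + c_2 \chi m$ for constants $c_1,c_2>0$ (the $\chi m$ term comes from the fact that a single global excess edge may be revisited once per sub-path), and that at each important time the walker has at most $t$ admissible edge choices.

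Gathering everything, the number of encodings is bounded by
\begin{equation}
(2tm)^{2\chi}\cdot t^{\,c_1m+c_2\chi m}.
\end{equation}
Invoking Lemma~\ref{asymptotic_lemma} with $m=\lfloor\ln(n)/(\A\ln\ln n)\rfloor$ and $t=O(\ln n)$, the factor $t^{c_1 m}$ is at most $n^{c_1/\A+o(1)}$ and the factor $t^{c_2\chi m}=(t^m)^{c_2\chi}$ is at most $n^{c_2\chi/\A+o(1)}$; the polynomial prefactor $(2tm)^{2\chi}$ is absorbed in the $o(1)$. Choosing $c_1\leqslant 25$ and $c_2\leqslant 17$ in the important-times count then yields the claimed bound $n^{25/\A+17\chi/\A}$ for $n$ large enough.

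The main obstacle is the precise bookkeeping of important times. A walk of length $t$ on a unicyclic sub-graph can in principle oscillate around the cycle many times, and one must verify that none of these oscillations contributes additional \emph{non-forced} choices once the cycle has been discovered, since the combinatorial type of the traversal is then pinned down by the entry and exit points of the cycle together with the at-most-$\chi$ global excess edges. Handling the boundary condition between consecutive sub-paths---where the starting half-edge of $\mathbf{p}_{i+1}$ may point back into previously used edges without being forced---is the other delicate point and is what accounts for the additive $c_1 m$ contribution, requiring a careful case analysis of which sub-path opens which global excess edge.
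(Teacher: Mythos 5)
Your outline follows the same route as the paper (a Komlós/Bordenave-style injective encoding: a lexicographic representative, tree steps forced by the prescribed vertex sequence $\mathbf{i}$, and a price paid only at $O(m)+O(\chi m)$ ``decision'' times, converted into powers of $n$ via Lemma \ref{asymptotic_lemma}), but the step you explicitly defer is precisely the content of the paper's proof, so as written there is a genuine gap. Concretely, one must (i) classify the cycling times into \emph{short} ones (at most one per sub-path, and this is exactly where tangle-freeness is used), \emph{long} ones (at most $\chi$ per sub-path, since a single excess edge may recur in each of the $2m$ sub-paths), and \emph{superfluous} ones (repetitions inside the already-discovered loop, which need no code); and (ii) specify marks that make the encoding injective. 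A mark cannot consist of ``one edge choice'': it must record the half-edge indices at the collision (re-using an existing edge versus opening a new parallel edge gives inequivalent classes with the same vertex itinerary, so target vertices alone do not determine the class), the time spent in the loop, the half-edge by which the loop is left, and, crucially, the next vertex $u_i$ at which the walk leaves the tree built so far. This last item is what makes the walk ``forced'' between decision times: after re-entering explored territory the walk is not forced at all, and without the exit vertex the decoding fails. Your proposal asserts the count $c_1 m + c_2\chi m$ and the forcedness, but proves neither; that is the heart of the lemma.

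Two further inaccuracies, one conceptual and one quantitative. The ``backtracking along the unique parent edge'' picture is imported from the undirected non-backtracking setting and does not apply here: the sub-paths are directed, so on a tree there are no repeated vertex visits at all, repeats inside a tangle-free sub-path arise only from its unique cycle, and the number of turns around that cycle is \emph{not} pinned down by the entry and exit points alone --- it must be recorded (one factor of $t$ per sub-path, the paper's $\ell_i$). Also ``at most $t$ admissible choices per important time'' undercounts the marks, which are of size $O(\Delta^3 v^3 t)$ (still polylogarithmic since $v\leqslant 2tm$), so the constants cannot simply be ``chosen'' to be $25$ and $17$: they must be re-derived from the actual mark sizes, as in the paper's count $(2\Delta t m)^{8m\chi+12m}$, which Lemma \ref{asymptotic_lemma} then places below $n^{\frac{25}{\A}+\frac{17}{\A}\chi}$ for $n$ large. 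With the short/long/superfluous classification, the enriched marks, and this bookkeeping supplied, your sketch becomes the paper's proof; without them it is an outline of the right strategy rather than a proof.
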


We now prove this lemma. The explored vertices are $\mathbf{i} = (i_1, ..., i_v)$, in this order. Recall Notation \ref{nota:deg}: half-edges are noted $(u,i,\varepsilon)$ with $\varepsilon \in \{-,+\}$ and $i \leqslant d^\varepsilon_u$. We first describe a coding pattern for the equivalence classes (in Paragraphs \ref{subsub:choice}-\ref{subsec:superfluous}) and then prove \eqref{counting} in Paragraph \ref{proof:counting}.

\subsubsection{Choice of the path.}\label{subsub:choice}

In any equivalence class, we choose a $\mathbf{p}$ visiting heads and tails in the ``alternating lexicographic order", that is 

\begin{itemize}
\item  vertex $u$ before vertex $v>u$, 
\item head $(u,s,+)$ before head $(u,s',+)$ with $s'>s$ and the same for tails, 
\item and such that 
\begin{itemize}
\item if $i$ is even, $\mathbf{e}_{i,s}$ is a head and $\mathbf{f}_{i,s}$ is a tail, 
\item if $i$ is odd, $\mathbf{e}_{i,s}$ is a tail and $\mathbf{f}_{i,s}$ is a head. 
\end{itemize}
\end{itemize}

The chosen $\mathbf{p}$ will be called the \emph{representative path} of the class $X^{a,v}_m(\mathbf{i})$. We will note $\mathbf{p} = (\mathbf{e}_{i,s}, \mathbf{f}_{i,s})_{i,s}$. The edge $(\mathbf{e}_{i,s}, \mathbf{f}_{i,s})$ will be noted $y_{i,s}$. We see $\mathbf{p}$ as a walk on the vertices $\mathbf{i}$. The index $(i,s)$ in $\mathbf{p}$ is seen as a time parameter. At time $(i,s)$, the walk is located on the vertex $u$ attached to $\mathbf{e}_{i,s}$, and then moves along the edge $y_{i,s}$ to go to the vertex $v$ to which is attached $\mathbf{f}_{i,s}$. 

\subsubsection{Creating the spanning tree.}

We build a marked graph $T$ on the vertex-set $\mathbf{i}$ by adding the graph-edge\footnote{Recall notations from section \ref{strat}. \emph{Edges} are pairs of half-edges seen in $\mathbf{p}$ while \emph{graph-edges} are pairs of vertices corresponding to some edge.} $(u,v)$ with mark $y_{i,s}$ when vertex $v$ is explored for the first time at time $(i,s)$. The edge $y_{i,s}$ is called a tree edge. The (unmarked) graph $T$ is clearly a tree on the vertex set $\mathbf{i}$. The mark over every edge of $T$ keeps track of the half-edges used to discover for the first time the endvertex of this edge.

Suppose that we are at time $(i,t)$ and the edge we are currently exploring is $y_{i,t} = (\mathbf{e}_{i,t}, \mathbf{f}_{i,t} )$ and leads to vertex $u$. If the vertex $u$ is already part of the tree $T$ then the edge $y_{i,t}$ is called an excess edge and time $(i,t)$ is called a \emph{cycling time} for obvious reasons. 

Due to the very specific structure of $\mathbf{p}$ (a sequence of tangle-free paths with boundary conditions), such times can easily be understood: either they count as cycling times inside a tangle-free path $\mathbf{p}_i$ (which can happen only once for every $i \leqslant 2m$), or they are cycling times between different $\mathbf{p}_i$. 

\bigskip

We are now going to give an encoding of $\mathbf{p}$: the idea is roughly that if there were no cycling times, $\mathbf{p}$ would perfectly be uncoded without needing anything, due to the choice of lexicographic ordering of half-edges. Therefore, by noting the different cycling times and giving them a minimal amount of information on how to decode them, we will be able to explore the non-cycling times as usual and create the tree $T$ in the process, and when stepping on a cycling time we will use all the previous information (mainly, $T$) and the mark to determine where to go.

\subsubsection{Short cycling times.}
 Each sub-path $\mathbf{p}_i$ is tangle-free. Let $r_i$ denotes the first time when $\mathbf{f}_{i,r_i}$ is attached to a vertex \emph{already visited by $\mathbf{p}_i$}: this time is called a \emph{short cycling time}. If this cycling time does not exist, we artificially set it to be the symbol $\otimes$; thus, $r_i  =\otimes$ means that $\mathbf{p}_i$ has no cycles. Also, let $\sigma_i$ be the first time when the path left this vertex after its first visit in $\mathbf{p}_i$. If $r_i = \otimes$, we set $\sigma_i=0$. If $r_i \neq \otimes$, the cycle $\mathfrak{C}_i$ in $\mathbf{p}_i$ is precisely given by the edges $\mathfrak{C}_i = \{y_{i, \sigma_i}, y_{i, \sigma_i +1}, ..., y_{i, r_i}\}$ and it might be visited more than once. Note $\ell_i$ the ``total time spent in the loop", that is the number of times $(i,t)$ such that $y_{i,t}$ is in $\mathfrak{C}_i$. Then, the knowledge of 
\begin{enumerate}
\item the cycling time $(i,r_i)$
\item the half-edges $\mathbf{e}_{i,r_i}$ and $\mathbf{f}_{i,r_i}$
\item the total time spent ``in the loop" $\ell_i$ and the half-edge $\mathbf{e}_{i,\tau_i}$ where we're leaving the cycle,
\item the next vertex $u_i$ where we will leave the edges of the tree $T$,
\end{enumerate}
are sufficient to reconstruct the path $\mathbf{p}_i$ up to the visit of vertex $u_i$. Note that in the second step, if $\mathbf{e}_{i,r_i} = (v_{r_i}, j_{r_i}, \pm)$, the vertex $v_{r_i}$ is already known, and whether $\mathbf{e}_{i,r_i}$ is a head or a tail is also known according to the parity of $i$, so we only need to know $j_{r_i}$. Thus, if $r_i \neq \otimes$, the mark for the $i$-th short cycling time $(i,r_i)$ will be 
\begin{equation}\label{mark_SCT}
(j_{i,r_i}, \mathbf{f}_{i,r_i}, \ell_i, \mathbf{e}_{i, \tau_i}, u_i)
\end{equation}
and if $r_i = \otimes$ this mark is set to be $\emptyset$.

We have at most one short cycling time per $\mathbf{p}_i$ which is a path of length $t$. Fix $i$: if there is no cycling time, $r_i=\emptyset$ (one possibility). If there is a cycling time, there are $t$ choices for its location. Once this time has been chosen, there are at most $\Delta (\Delta v) t (\Delta v) v = \Delta^3 v^3 t$ possible marks as \eqref{mark_SCT} for the short cycling time. This bound is extremely crude but will be sufficient for our purpose. Thus, the total number of possible marks for the short cycling time of $\mathbf{p}_i$ is $1+\Delta^3 v^3 t$.

\begin{remarque}\label{remark_uncoding}
Suppose we are decoding a short cycling time. The last part of the mark is $u_i$; as $T$ is a tree, this means that the path to follow is perfectly known up to $u_i$. Arriving at $u_i$ at a certain time, say $(i', t')$, we know that we are going to leave the tree $T$ constructed so far, and this can lead to two situations. 
\begin{itemize}
\item  The time $(i', t')$ can be another cycling time. In this case, the procedure defined on this paragraph (if the cycling time is short) or the next paragraph (if it is long) will tell us where to go next. 
\item The time $(i', t')$ is not a cycling time. If we note $v$ the next vertex after $u_i$, this means that the edge $(u_i, v)$ is not in the tree $T$ constructed so far, and that $v$ is not already discovered.  Therefore, the path is just going to explore this new vertex $v$ and we are going to add the edge $(u_i, v)$ to $T$. Note that the use of the lexicographic order clearly tells us which half-edges to use. 
\end{itemize}
\end{remarque}

\subsubsection{Long cycling times.}
There are also cycling times that are not ``short cycling times": basically, it is when a path $\mathbf{p}_i$ collides with another path $\mathbf{p}_j$ with $j<i$. More precisely, let $(i,t)$ be a cycling time  leading to the (already known) vertex $u$. If $u$ is not one of the vertices discovered by $\mathbf{p}_i$, then $(i,t)$ is called a \emph{long cycling time}: in this case, $u$ had already been visited by some $\mathbf{p}_j$ with $j<i$. Here again, we are going to mark long cycling times with different items, so they could be easily deduced from the marks. When arriving at a long cycling time, we need to know: 
\begin{enumerate}
\item the head $\mathbf{e}_{i,t}$ and the tail $\mathbf{f}_{i,t}$,
\item the next vertex $u_i$ where we will leave the edges of the tree $T$ (no extra information is needed: see Remark \ref{remark_uncoding}). 
\end{enumerate}

The mark obtained has the form
\begin{equation}\label{LCM}(j_{i,t}, \mathbf{f}_{i,t}, u_i ).
\end{equation}

For every long cycling time, there are at most $\Delta^2 v^2$ marks like \eqref{LCM}. 

\begin{figure}[H]
\centering
\includegraphics[scale=0.6]{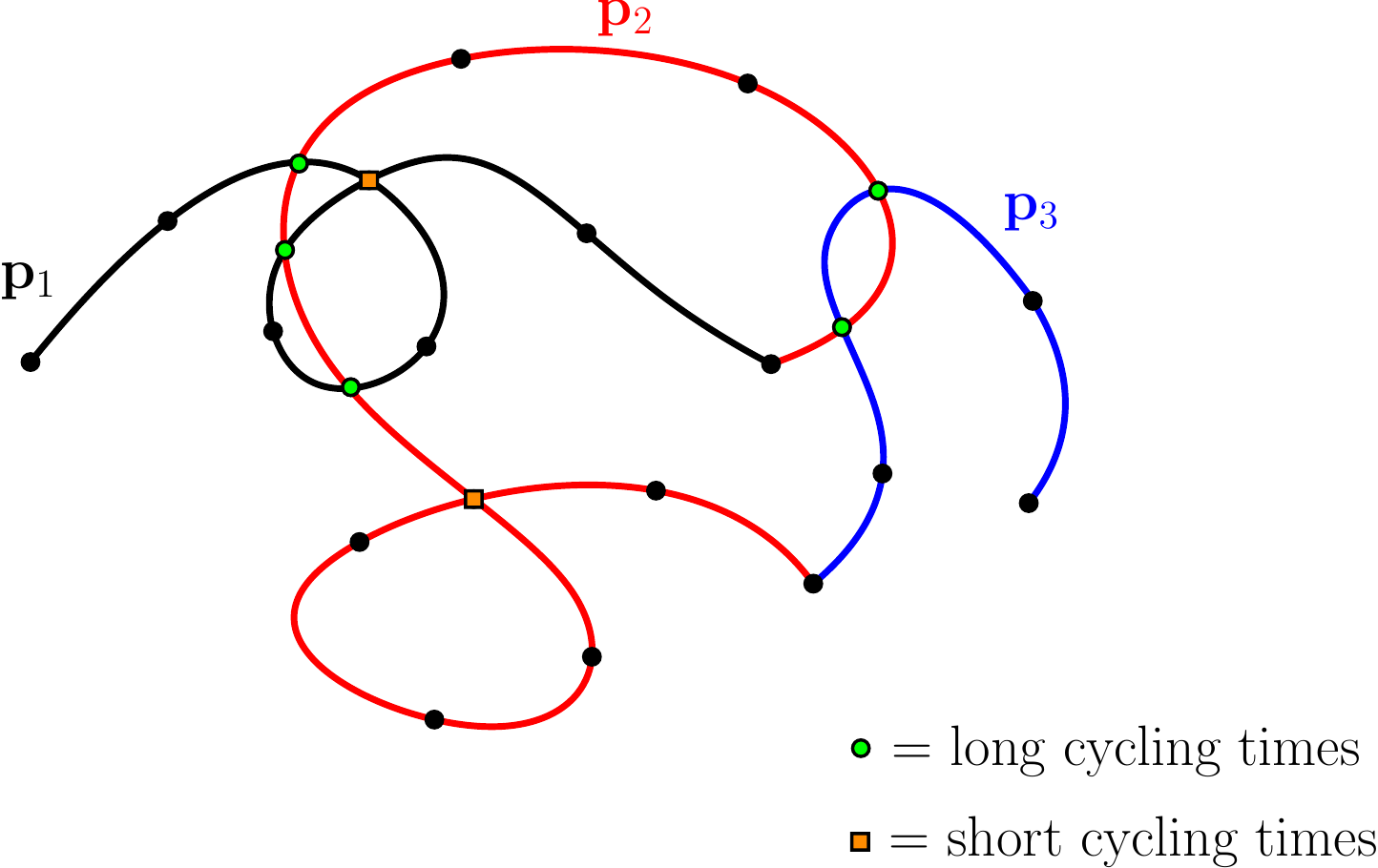}
\caption{Some examples of vertices generating long and short cycling times. }
\end{figure}

\subsubsection{Superfluous times.}\label{subsec:superfluous}There is another kind of cycling times we have not yet coded: those times are the cycling times ``embedded in the loop" of a short cycling time, that is all the times \emph{except for the first one} when $(i,t)$ when $\mathbf{f}_{i,t}$ is attached to a vertex already visited by $\mathbf{p}_i$. Those times need no special treatment as they are decoded with the mark of the short cycling time associated with $i$. For this reason, they will be called \emph{superfluous cycling times} and play no role in the coding procedure. 

\begin{figure}[H]
\centering
\includegraphics[scale=0.6]{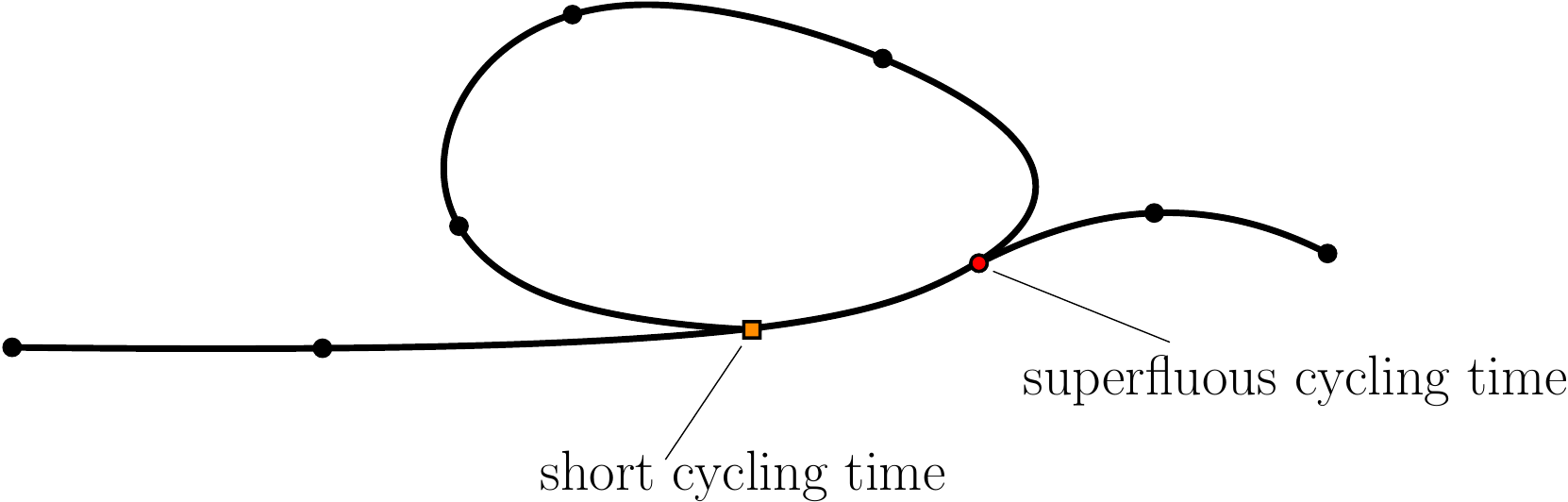}
\end{figure}

\subsubsection{Proof of Lemma \ref{prop:counting}.}\label{proof:counting}
We now gather the number of different types of marks to get a bound on the number of equivalence classes in $\mathscr{C}_m$. Recall the definitions given in Subsection \ref{geometry} (page \pageref{geometry}) and the difference between \emph{edges} of $\mathbf{p}$ and \emph{graph-edges} of $\mathbf{p}$. Consider the undirected multi-graph spanned by the \emph{unoriented} graph-edges of $\mathbf{p}$ on vertices $\mathbf{i} = (i_1, ..., i_v)$. This graph is connected. Its total number of edges is at most $a$ (if no edge is visited two times in opposite directions\footnote{Observe that it is also at least $a/2$ if all edges are visited twice, in opposite directions. This will not be used in the proof.}. Therefore, there are at most $\chi := a-v+1$ excess edges. For each $i \leqslant 2m$, there are at most $\chi$ cycling times, \emph{a fortiori} there are at most $\chi$ long cycling times. Therefore, we have at most $t^{2m\chi}$ choices for the positions for the long cycling times and we have already seen that we have $t^{2m}$ choices for the positions of the short cycling times. Now the total count amounts to $t^{2m(\chi +1)}  ((\Delta v)^2)^{2m\chi} ((\Delta v)^3 t )^{2m}$ possible codings. Organizing termes leads to $t^{2m\chi + 4m}(\Delta v)^{4m\chi + 6m}$ which (using $v \leqslant 2tm$) is bounded by 
$$(2\Delta tm)^{8m\chi + 12m}.$$

Using the asymptotic properties exposed in Lemma \ref{asymptotic_lemma}, this expression can be simplified. Note for example that there is an integer $n_1$ only depending on $\Delta$ such that for every $n \geqslant n_1$, we have $(2\Delta tm)^{8m}  \leqslant  n^\frac{17}{\A}$, and the same argument gives $(2\Delta tm)^{12m} \leqslant n^\frac{24}{\A}$. Hence, when $n$ is larger than $n_1$, we have 
$$(2\Delta tm)^{8m\chi +12m} \leqslant  n^{\frac{25}{\A}+\frac{17}{\A}  \chi}$$
which ends the proof of \eqref{counting}

\subsection{Proof of Proposition \ref{prop:combidefinitive}} \label{Subsection:total} Let us note $\mathscr{N}(a,v,\mathbf{i})$ the set of equivalence classes $\mathscr{E}$ inside $X^{v,a}_m(\mathbf{i})$. We have 
$$\# X^{v,a}_m(\mathbf{i}) = \sum_{\mathscr{E} \in \mathscr{N}(a,v,\mathbf{i})} \#\mathscr{E}.$$
Using Lemmas \ref{lem:cardinal_equiv_classes} and \ref{prop:counting}, when $n$ is larger than $n_1$ we get 
\begin{align*}
\# X^{v,a}_m(\mathbf{i}) &\leqslant \sum_{\mathscr{E} \in \mathscr{N}(a,v,\mathbf{i})} C^\chi \prod_{i \in \mathbf{i}} d^+_{i}d^-_{i} \leqslant n^{\frac{25}{\A}+\frac{17}{\A}  \chi} C^\chi \prod_{i \in \mathbf{i}} d^+_{i}d^-_{i} 
\end{align*}
which is the conclusion of Proposition \ref{prop:combidefinitive}.

\section{Upper bound for \textit{f}.}\label{sec:analysis_f}

Our aim in the next paragraphs will be to bound $f(\mathbf{p})$ (which was defined in \eqref{def:f}) with an expression that depends on the variables $a,v,m,t,\mathbf{i}$. We recall a definition from Section \ref{tech_section}: if $\mathfrak{p}$ is a proto-path of length $N$, then 
$$\omega(\mathfrak{p}) = \prod_{s=1}^N \frac{1}{d^+_{\mathbf{e}_s}}.$$ 
Every path is itself a proto-path, so we can extend the definition of the weight $\omega$ in a natural way to $\mathbf{p}\in\mathscr{C}_m$:
$$\omega(\mathbf{p}) = \prod_{i=1}^{m}\omega(\mathbf{p}_i)\omega(\bar{\mathbf{p}}_i).$$

The result of this section is the following proposition which gives upper bounds for $|f(\mathbf{p})|$ depending on $a,\chi$. 

\begin{prop}\label{regions}
Let $\mathbf{p}$ be any path with $v$ vertices and $a$ edges. Note $\chi = a-v+1$. Then, there is a constant $C>0$ and an integer $n_2$ such that for every $n \geqslant n_2$, we have the following inequalities:
\begin{itemize}
\item If $\chi \geqslant v- tm - 1$, then 
\[
|f(\mathbf{p})|\leqslant  \frac{n^{o(1)}}{\delta^{2(tm-v)}} \prod_{i \in \mathbf{i}} \left(\frac{1}{d_i^+}\right)^2  \left( \frac{C}{M} \right)^{\chi} \left( \frac{c}{M} \right)^{v-1} .
\]
\item Else $\chi \leqslant v-tm-1$ and we have
\[
|f(\mathbf{p})|\leqslant  \frac{n^{o(1)}}{\delta^{2(tm-v)}} \prod_{i \in \mathbf{i}} \left(\frac{1}{d_i^+}\right)^2  \left( \frac{C}{M} \right)^{\chi} \left( \frac{c}{M} \right)^{v-1} \left( \frac{6tm}{\sqrt{M}}\right)^{2(v-tm-1-\chi)}.
\]

\end{itemize}
\end{prop}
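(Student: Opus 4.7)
The plan is to view each element $\mathbf{p} \in \mathscr{C}_m$ as a proto-path $\mathfrak{p}$ of length $N = 2tm$ by concatenating the $2m$ blocks of (head, tail) pairs $(\mathbf{e}_{2i-1,s}, \mathbf{f}_{2i-1,s})$ and $(\mathbf{f}_{2i,s}, \mathbf{e}_{2i,s})$; the order is immaterial for Theorem \ref{tech}. Since $N = 2tm \leq \sqrt{M}$ for our asymptotic parameters and every factor of $f(\mathbf{p})$ is a centered $\underline{A}$-term, Theorem \ref{tech} applied with $p = N$ yields
\[
|f(\mathbf{p})| \leq 24\,\omega(\mathbf{p})\,3^{b}\left(\frac{c}{M}\right)^{a}\left(\frac{N}{\sqrt{M}}\right)^{a_1}.
\]
The remaining work is to translate the combinatorial quantities $\omega$, $b$, $a$, $a_1$ into bounds involving the geometric data $(v, \chi, \mathbf{i})$.

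Writing $a = v - 1 + \chi$ splits $(c/M)^a$ as $(c/M)^{v-1}(c/M)^\chi$. The weight $\omega(\mathbf{p}) = \prod_u (1/d_u^+)^{h_u}$, where $h_u$ counts the heads of $\mathfrak{p}$ attached to $u$, is compared vertex-by-vertex with $\prod_{i \in \mathbf{i}}(1/d_i^+)^2$: using $\sum_u h_u = 2tm$ and the degree bounds $\delta \leq d_u^+ \leq \Delta$, one obtains $\omega(\mathbf{p}) \leq n^{o(1)}\delta^{-2(tm-v)}\prod_{i \in \mathbf{i}}(1/d_i^+)^2$, with the $n^{o(1)}$ absorbing the correction coming from the few boundary vertices of $\mathscr{C}_m$ with fewer than two heads attached. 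A case analysis of how two edges of $\mathfrak{p}$ can share a half-edge shows $b \leq O(\chi)$, so $3^b \leq C^\chi$ is absorbed into the constant of the final $(C/M)^\chi$ factor. In the first regime $\chi \geq v - tm - 1$, the trivial bound $a_1 \geq 0$ combined with $N/\sqrt{M} \leq 1$ already closes the argument.

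For the second regime, a lower bound on $a_1$ is needed: since each non-simple edge has weight $w_i \geq 2$ and $\sum_i w_i = N$, there are at least $2a - N$ simple edges; subtracting the $O(\chi)$ inconsistent ones yields $a_1 \geq 2a - N - O(\chi) \geq 2(v - tm - 1 - \chi)$ after the $O(\chi)$ correction is absorbed into $(C/M)^\chi$. Since $N/\sqrt{M} \leq 1$ and the exponent is positive in this regime,
\[
(N/\sqrt{M})^{a_1} \leq (2tm/\sqrt{M})^{2(v-tm-1-\chi)} \leq (6tm/\sqrt{M})^{2(v-tm-1-\chi)},
\]
yielding the second case. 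The main technical obstacle is the careful bookkeeping that allows all error factors to be absorbed into $n^{o(1)}$ and $C^\chi$ constants; in particular, establishing the sharp bound $b \leq O(\chi)$ requires a detailed inspection of how distinct edges of the proto-path can share a half-edge, and charging each such coincidence against an excess edge of the underlying multigraph $G(\mathbf{p})$.
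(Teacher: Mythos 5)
Your overall skeleton is the same as the paper's (concatenate $\mathbf{p}$ into a proto-path of length $N=2tm$, apply Theorem \ref{tech} with $p=N$, write $a=v-1+\chi$, bound the number of inconsistent edges by $O(\chi)$ as in Lemma \ref{b}, and lower-bound the number of simple consistent edges by $2a-N-4\chi=2(v-tm-1-\chi)$ before lowering the exponent of the base $N/\sqrt{M}\leqslant 1$). However, there is a genuine gap in your weight estimate. The claim $\omega(\mathbf{p}) \leqslant n^{o(1)}\delta^{-2(tm-v)}\prod_{i\in\mathbf{i}}(1/d_i^+)^2$, with the correction attributed only to ``the few boundary vertices'', is false in general. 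Writing $\omega(\mathbf{p})=\prod_u (1/d_u^+)^{h_u}$, the comparison with $\delta^{-2(tm-v)}\prod_u(1/d_u^+)^2$ leaves a surplus factor of $d_u^+$ for \emph{every} vertex visited exactly once, boundary or not: a once-visited interior vertex has exactly one outgoing edge of the proto-path attached to it. Hence the correction is $(\Delta/\delta)^{v_1}$ where $v_1$ is the total number of once-visited vertices, and $v_1$ can be of order $tm = \Theta(\ln(n)^2/\ln\ln n)\gg \ln n$ (take a circuit whose edges are almost all simple), so $(\Delta/\delta)^{v_1}$ is not $n^{o(1)}$ whenever $\Delta>\delta$. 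Your bound is only valid in the regular-type case $\delta=\Delta$, while the proposition concerns arbitrary degree sequences satisfying \eqref{H}.

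This is exactly the point where the paper has to work: Lemma \ref{lem:weightp} keeps the factor $C^{\chi+a_1}$, obtained from the injection $v_1\leqslant 2m+4\chi+a_1$ (once-visited non-boundary vertices are injected into simple edges, and inconsistent ones are controlled by Lemma \ref{b}), and the otherwise unabsorbable $C^{a_1}$ is neutralized only because it multiplies the factor $(2tm/\sqrt{M})^{a_1}$ coming from Theorem \ref{tech}, with $C\cdot 2tm/\sqrt{M}\leqslant 1$ for large $n$. In your argument that compensating factor has already been discarded (bounded by $1$ in the first regime, exponent lowered in the second) before the surplus from the once-visited vertices is accounted for, so there is nowhere left to absorb it. The fix is to prove and carry the $C^{a_1}$ term as in Lemma \ref{lem:weightp}, and only then combine it with $(N/\sqrt{M})^{a_1}$; this changes the numerical constant in the base $6tm/\sqrt{M}$ into a constant depending on $\Delta/\delta$ (a harmless modification, since the base still tends to $0$), but it cannot be bypassed by an $n^{o(1)}$ absorption. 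Your remaining steps (the lower bound on $a_1$, the use of $b\leqslant 4\chi$, and the exponent comparison) match the paper's proof.
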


The rest of the section is devoted to the proof of this proposition.

\subsection{Expressing the weight $\omega(\mathbf{p})$ with graph-dependant variables.}

Fix $\mathbf{p}$ in $X^{a,v}_m(\mathbf{i})$. For every $s>0$, let $V_s$ be the set of vertices that are visited by $\mathbf{p}$ exactly $s$ times and note $v_s = \#V_s$, so that $\sum_{s >0} v_s = v$ and $\sum_{s>0} sv_s = 2tm$.  A vertex is called a \emph{boundary vertex} if it is the endpoint or beginning point of a sub-path of $\mathbf{p}$: if $\mathbf{p} = (\mathbf{p}_i)_{i \leqslant 2m}$ (with each of the $\mathbf{p}_i$'s being tangle-free paths of length $t$) then boundary vertices are those attached to half-edges $\mathbf{e}_{i,0}$ or $\mathbf{f}_{i,t}$. We also recall that $a_1$ is the number of consistent edges of $\mathbf{p}$ visited exactly once: this quantity was introduced in Section \ref{tech_section} and appears in the statement of Theorem \ref{tech}. Also, recall that $b$ is the number of inconsistent edges.

\begin{lem}\label{lem:weightp}
There is a constant $C>0$ such that for every $\mathbf{p} \in X^{a,v}_m(\mathbf{i})$ we have 
\begin{equation}\label{weightp}
\omega(\mathbf{p}) \leqslant n^{o(1)} \prod_{i \in \mathbf{i}} \left(\frac{1}{d_i^+}\right)^2  \frac{C^{\chi + a_1}}{\delta^{2(tm-v)}}.
\end{equation}
\end{lem}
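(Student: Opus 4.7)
The plan is to expand $\omega(\mathbf{p})$ as a product over vertex visits, extract the squared factor $\prod_{i \in \mathbf{i}} (d_i^+)^{-2}$, and control the residual via a combinatorial bound on single-visit vertices. First I would establish a vertex-visit formula. Each of the $2m$ sub-paths of length $t$ in $\mathbf{p}$ contributes $t$ weight factors of the form $1/d_u^+$, where $u$ is the vertex attached to the corresponding ``head'' half-edge (after the reversal convention applied to even-indexed sub-paths). Following the cyclic concatenation structure of $\mathscr{C}_m$, these $2tm$ factors biject with the $2tm$ vertex visits, giving
\[
\omega(\mathbf{p}) = \prod_{u \in \mathbf{i}} (d_u^+)^{-s_u}, \qquad \sum_{u \in \mathbf{i}} s_u = 2tm, \quad s_u \geq 1,
\]
where $s_u$ denotes the number of visits to $u$.

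Next I would extract the square factor by writing $s_u = 2 + (s_u - 2)$ and splitting the residue according to whether $s_u \geq 2$ or $s_u = 1$. In the first case, the bound $d_u^+ \geq \delta$ gives $(d_u^+)^{2 - s_u} \leq \delta^{2 - s_u}$; in the second case, $d_u^+ \leq \Delta$ gives $(d_u^+)^{2 - s_u} \leq \Delta$. Setting $v_1 = \#\{u \in \mathbf{i} : s_u = 1\}$ and summing exponents via $\sum_{u : s_u \geq 2}(s_u - 2) = 2tm - 2v + v_1$ yields
\[
\omega(\mathbf{p}) \leq \prod_{u \in \mathbf{i}} (d_u^+)^{-2} \cdot (\Delta/\delta)^{v_1} \cdot \delta^{-2(tm - v)}.
\]

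Since $\Delta/\delta$ is an absolute constant, the lemma reduces to the combinatorial inequality $v_1 \leq C'(\chi + a_1)$ for some constant $C' > 0$. To establish this, I would observe that each single-visit vertex $u$ is incident to exactly two distinct simple edges of $\mathbf{p}$, namely the unique entering and exiting edges at $u$'s single visit (distinct because they involve distinct half-edges at $u$, both used only once). I would then classify each such incident simple edge by whether its second end-half-edge is used only once in $\mathbf{p}$: if so, the edge is consistent and can be charged to $a_1$; if it is reused, the reuse forces a second distinct edge sharing that half-edge, contributing an excess edge to the multigraph $G(\mathbf{p})$ and thus to its tree excess $\chi$. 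A double-counting over the $2 v_1$ incident half-edges then gives $v_1 \leq 2 a_1 + O(\chi)$.

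The main obstacle is this final combinatorial estimate: proving the precise linear relation $v_1 \lesssim \chi + a_1$ demands a careful charging argument guaranteeing that no excess edge of $G(\mathbf{p})$ is assigned to more than a bounded number of inconsistent simple edges incident to $V_1$. Steps 1 and 2 are routine algebraic manipulations, but this last step carries the combinatorial heart of the lemma and parallels the tree-excess analysis used throughout Section \ref{sec:combi}; the $n^{o(1)}$ factor in the statement merely absorbs subleading corrections.
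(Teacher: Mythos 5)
Your first two steps coincide with the paper's proof verbatim: the visit decomposition, the extraction of $\prod_{i\in\mathbf{i}}(d_i^+)^{-2}$, and the exponent count $\sum_{s_u\geqslant 2}(s_u-2)=2(tm-v)+v_1$ are exactly what is done there, so the whole lemma indeed reduces to controlling $v_1$.

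The gap is in the combinatorial step: the inequality you reduce to, $v_1\leqslant C'(\chi+a_1)$, is false, and the argument you give for it (``each single-visit vertex is incident to exactly two distinct simple edges, the unique entering and exiting edges'') breaks down at the boundary vertices where consecutive sub-paths of $\mathbf{p}$ are glued. Because the even-indexed sub-paths are traversed in reverse, at such a junction the two incident steps both point \emph{into} the vertex and may even be the same edge. Concretely, take each even sub-path equal to the preceding odd one, so the walk retraces every $\mathbf{p}_{2i-1}$ backwards (this is the dominant configuration in any trace computation): then every edge has weight $2$, so $a_1=0$, the multigraph $G(\mathbf{p})$ is a union of simple paths glued at endpoints, so $\chi=0$, yet the $2m$ extremal vertices are each visited once, so $v_1$ is of order $m$ while $\chi+a_1=0$. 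The correct statement, and the one the paper proves, is $v_1\leqslant 2m+a_1+4\chi$: one injects only the \emph{non-boundary} vertices of $V_1$ into the set of simple edges (at most $2m$ boundary vertices are simply discarded), and then discounts simple-but-inconsistent edges via Lemma \ref{b} ($b\leqslant 4\chi$). The extra factor $(\Delta/\delta)^{2m}$ is then absorbed into $n^{o(1)}$ thanks to the choice $m=O(\ln n/\ln\ln n)$ of Lemma \ref{asymptotic_lemma} --- this is precisely where the $n^{o(1)}$ in the statement is needed, not merely to ``absorb subleading corrections.'' Your proof is repaired by restricting your charging argument to non-boundary single-visit vertices and invoking this absorption, but as written the key claim and its justification are incorrect.
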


\begin{proof}

As a consequence of the definition of the sets $V_s$, we have 
$$\omega(\mathbf{p}) = \prod_{s>0} \prod_{i \in V_s} \left(\frac{1}{d_i^+}\right)^s, $$
with the usual convention that a product over an empty set is equal to $1$. All the products are in fact finite. Isolating $(d_i^+)^2$ for each $i$, we get the following: 
\begin{equation}\omega(\mathbf{p}) = \prod_{i \in \mathbf{i}} \left( \frac{1}{d_i^+}\right)^2\prod_{i \in V_1}d_i^+ \prod_{s>2}\prod_{i \in V_s} \left( \frac{1}{d_i^+}\right)^{s-2}.\end{equation}
Using hypothesis \eqref{H}, this can be bounded by 
\begin{equation}\prod_{i \in \mathbf{i}} \left( \frac{1}{d_i^+}\right)^2 \Delta^{v_1} \left(\frac{1}{\delta}\right)^{\sum_{s>2} \sum_{i \in V_s} (s-2)}.\end{equation}
We also have 
\begin{align*}
\sum_{s>2} \sum_{i \in V_s} (s-2 ) &= \sum_{s>2} s v_s - 2 \sum_{s>2}v_s \\
&= 2tm - v_1 - 2v_2 - 2v + 2v_1 + 2v_2 \\
&= 2(tm-v) +v_1.
\end{align*}

Thus, we have $\omega(\mathbf{p} )\leqslant \prod_{i \in \mathbf{i}} (d_i^+)^{-2} \Delta^{v_1} \delta^{-2(tm-v)}\delta^{-v_1}$. 

We are now going to give a bound on $v_1$,  the number of vertices visited once. At most $2m$ of them belong to the boundary vertices of $\mathbf{p}$. If $i$ is in $V_1$ but is not a boundary vertex, there are exactly two simple edges adjacent with $i$, one entering in $i$ and one going out of $i$. One simple edge is adjacent to at most two vertices, so two distinct vertices in $V_1$ can be adjacent to at most one common simple edge, and we have an injection from the set of non-boundary vertices in $V_1$ into the set of simple edges, whose cardinal will be denoted by $a_1'$: as there are no more than $2m$ boundary vertices, we have $v_1 \leqslant 2m + a'_1$. Those $a'_1$ edges might however be inconsistent: if $a_1' = a_1+z'$ with $z'$ the simple and inconsistent edges, we have $z' \leqslant b$.

\begin{lem}\label{b}With the preceding notations, $b \leqslant 4\chi$.\end{lem}

This yelds $v_1 \leqslant 2m+4\chi + a_1$. As $\Delta/\delta\geqslant 1$, we have $(\Delta/\delta)^{v_1} \leqslant (\Delta/\delta)^{2m+4\chi+a_1}$ and finally
$$\omega(\mathbf{p}) \leqslant \prod_{i \in \mathbf{i}} (d_i^+)^{-2} (\Delta/\delta)^{2m+4\chi + a_1} \frac{1}{\delta^{2(tm-v)}}.$$
Asymptotics \ref{asymptotic_lemma} give $\Delta^{2m} = n^{o(1)}$. Taking $C = (\Delta/\delta)^4$ ends the proof of \eqref{weightp}.
\end{proof}

\begin{proof}[Proof of lemma \ref{b}]
Fix some inconsistent edge $y=(\mathbf{e}, \mathbf{f})$. Without loss of generality we can suppose that there is another edge with $\mathbf{e}$ as its beginning half-edge (say, $(\mathbf{e}, \mathbf{f}')$ with $\mathbf{f}'\neq \mathbf{f}$)  in $\mathbf{p}$. If $\mathbf{e}$ is attached to vertex $v$, then there are at most $4$ excess edges caused by the fact that $y$ is not consistent. Therefore, the total number of inconsistent edges is at most $4\chi$. 
\end{proof}

\subsection{Expressing $f$ with graph-dependant variables.}
Let $\mathbf{p}$ be in $X^{a,v}_m(\mathbf{i})$. In order to apply Theorem \ref{tech} to $\mathbf{p}$, we need a finer knowledge on the number of consistent or simple edges depending on $a$ and $v$. The general idea is the following: the more excess edges, the lesser simple and consistent edges. To apply Theorem \ref{tech}, we define $\mathfrak{p}$ to be the proto-path naturally given by $\mathbf{p}$. All the quantities $a,a_1$ and $b$ appearing in \eqref{tech:equation1} depend on $\mathfrak{p}$. A plain application of Theorem \ref{tech} and \eqref{weightp} with any $n$ greater than $n_0$, $N = 2tm$ and $p=2tm$ yields the following inequality:
\begin{equation}|f(\mathbf{p})|\leqslant 24 n^{o(1)} \prod_{i \in \mathbf{i}} \left(\frac{1}{d_i^+}\right)^2  \frac{\mathtt{C}_1^{\chi} 3^b}{\delta^{2(tm-v)}} \left( \frac{c}{M} \right)^{a} \left(\frac{6tm}{\sqrt{M}} \right)^{a_1}.\end{equation}
We now simplify this expression. The term $24n^{o(1)}$ is still of order $n^{o(1)}$. Let $a'_1$ be the number of simple edges (not necessarily consistent) and $a'_2$ be the number of other edges. It is clear that 
$$\begin{cases}a'_1 + a'_2 = a \\ a'_1 + 2a'_2 \leqslant 2mt\end{cases}$$ so $a'_1 \geqslant 2 (a - mt)$.  If $b$ is the number of inconsistent edges we have $a_1 \geqslant a'_1 - b$ so $a_1 \geqslant (2(a-tm) - b)_+$. Using Lemma \ref{b}, we get $a_1 \geqslant \big( 2(a-tm) - 4\chi \big)_+$. We use again Lemma \ref{b}:
\begin{equation}\label{ineq:f(p)}|f(\mathbf{p})|\leqslant n^{o(1)} \prod_{i \in \mathbf{i}} \left(\frac{1}{d_i^+}\right)^2  \frac{(3^4 C)^{ \chi}}{\delta^{2(tm-v)}} \left( \frac{c}{M} \right)^{a} \left(\frac{6tm}{\sqrt{M}} \right)^{\big( 2(a-tm) - 4\chi \big)_+}.\end{equation}
Proposition \ref{regions} now follows from \eqref{ineq:f(p)} by noting that $(2(a-tm) - 4\chi)_+=0$ if and only if $\chi \geqslant v-tm-1$. 

\section{Asymptotic analysis. }\label{sec:asymptotic_analysis}

We finally gather all the results from Sections \ref{sec:combi}-\ref{sec:analysis_f} and study their limit as $n$ grows to infinity. More precisely, we will pick only integers $n$ greater than $\max\{n_0, n_1\}$. We first decompose the sum \eqref{FUNDINEQ} according to $v,\chi$ and $\mathbf{i}$:
\begin{equation}\label{DECO-avi}
\mathbf{E}\left[ \Vert \underline{P}^{(t)}\Vert ^{2m}  \right] \leqslant \sum_{v=2}^{2mt} ~~ \sum_{\mathbf{i}=(i_1, \dotsc,  i_v)} ~~\sum_{\chi=0}^{2tm-v+1} \left( \sum_{\mathbf{p} \in X_m^{a,v}(\mathbf{i})} |f(\mathbf{p})| \right) = \mathcal{H}_1+\mathcal{H}_2+ \mathcal{L}
\end{equation}
where

\begin{align}
\mathcal{H}_1 &=  \sum_{v=2}^{mt+1} ~~ \sum_{i_1, ..., i_v} ~~\sum_{\chi=0}^{2tm-v+1}\left( \sum_{\mathbf{p} \in X_m^{a,v}(\mathbf{i})} |f(\mathbf{p})| \right)\label{def:H1zone} \\ \nonumber \\
\mathcal{H}_2 &=  \sum_{v=mt+2}^{2mt} ~~ \sum_{i_1, ..., i_v} ~~\sum_{\chi=v-tm-1}^{2tm-v+1}\left( \sum_{\mathbf{p} \in X_m^{a,v}(\mathbf{i})} |f(\mathbf{p})| \right)\label{def:H2zone} \\ \nonumber \\
\label{def:Lzone}\mathcal{L} &=  \sum_{v=mt+2}^{2mt} ~~ \sum_{i_1, ..., i_v} ~~\sum_{\chi=0}^{v-tm-2}\left( \sum_{\mathbf{p} \in X_m^{a,v}(\mathbf{i})} |f(\mathbf{p})| \right).
\end{align}

Each term will be separately bounded by $o(1)n^3(c\tilde{\rho})^{2tm}$ as claimed in \eqref{objectif_aux}. 

\subsection{Bound for $\mathcal{H}_1$.}

In this sum we sum over $v \leqslant tm + 1$. We use Proposition \ref{regions} and \eqref{combidefinitive} with $n$ greater than $n_0$. 
\begin{align}
 \sum_{\mathbf{p} \in X_m^{a,v}(\mathbf{i})} |f(\mathbf{p})| &\leqslant   \sum_{\mathbf{p} \in X_m^{a,v}(\mathbf{i})} \frac{n^{o(1)}}{\delta^{2(tm-v)}} \prod_{i \in \mathbf{i}} \left(\frac{1}{d_i^+}\right)^2  \left( \frac{C}{M} \right)^{\chi} \left( \frac{c}{M} \right)^{v-1} \\
&\leqslant  \left( \prod_{i \in \mathbf{i}} d_{i}^+ d_i^- \right) C^\chi n^{\frac{25}{\A} + \frac{17}{\A}  \chi} \frac{n^{o(1)}}{\delta^{2(tm-v)}} \prod_{i \in \mathbf{i}} \left(\frac{1}{d_i^+}\right)^2  \left( \frac{C}{M} \right)^{\chi} \left( \frac{c}{M} \right)^{v-1} \\
&\leqslant \left( \prod_{i \in \mathbf{i}} \frac{d_{i}^-}{ d_i^+} \right) \frac{ n^{\frac{25}{\A} +o(1)} }{\delta^{2(tm-v)}}  \left( \frac{C n^{\frac{17}{\A}}}{\delta n} \right)^{\chi} \left( \frac{c}{M} \right)^{v-1} \\
&\leqslant  \left( \prod_{i \in \mathbf{i}} \frac{d_{i}^-}{ d_i^+} \right) \frac{ n^{3} }{\delta^{2(tm-v)}} (Cn^{-\gamma})^{\chi} (cM^{-1})^v \label{LPM:H1}
\end{align}
where we noted $\gamma=1-17/\A \in ]0,1[$ and we chose $n$ large enough to ensure that the term $o(1)$ is smaller than $1/\A$. 
Putting \eqref{LPM:H1} into \eqref{def:H1zone} yelds
\begin{equation*}
\mathcal{H}_1 \leqslant \sum_{v=2}^{mt+1}\frac{  n^{3 } }{\delta^{2(tm-v)}}  \sum_{\mathbf{i}}(cM^{-1})^{v}  \left( \prod_{i \in \mathbf{i}} \frac{d_{i}^-}{ d_i^+} \right) \left\lbrace\sum_{\chi=0}^{2tm-v+1} (Cn^{-\gamma} )^{\chi}\right\rbrace .
\end{equation*}
The sum in $\chi$ (between braces) is a geometric sum started at $2$ and the ratio goes to $0$ as $n$ goes to infinity, so the whole term in braces is of order $o(1)$. Recall the definition of $\rho$: we have 
\begin{equation}
\label{rho_bound}\sum_{\mathbf{i}}(cM^{-1})^{v}  \left( \prod_{i \in \mathbf{i}} \frac{d_{i}^-}{ d_i^+} \right)  \leqslant \left(cM^{-1}\sum_{i=1}^n \frac{d_i^-}{d_i^+}\right)^v \leqslant (c\rho)^{2v}.
\end{equation}
Now
\begin{align}
\mathcal{H}_1&\leqslant \frac{o(1) n^{3}}{\delta^{2tm}} \sum_{v=2}^{mt+1} (c\delta \rho)^{2v} .
\end{align}

Here again, the sum is indeed geometric with ratio $c\delta\rho \leqslant c \delta \tilde{\rho}$ where we recall that $\tilde{\rho} = \rho \vee \delta^{-1}$. As $\delta \tilde{\rho} \geqslant \delta^{-1}$, we have $c\delta \tilde{\rho} \geqslant 1$, and 
\begin{align}
\sum_{v=2}^{mt+1} (c\delta \tilde{\rho})^{2v} &\leqslant (c\delta \tilde{\rho})^{2mt + 2}.
\end{align}
After simplifications, we get $\mathcal{H}_1 \leqslant o(1) n^{3}(c\tilde{\rho})^{2mt}$ which is the desired bound.

\subsection{Bound for $\mathcal{H}_2$.}

In this sum, $v>tm+1$ and $\chi \geqslant v-tm-1$. The computations are extremely similar to what was done in the preceding section, so we omit the details. As in the preceding section we have
\begin{equation} \sum_{\mathbf{p} \in X_m^{a,v}(\mathbf{i})} |f(\mathbf{p})|  \leqslant \left( \prod_{i \in \mathbf{i}} \frac{d_{i}^-}{ d_i^+} \right) \frac{ n^{\frac{25}{\A}+1 } }{\delta^{2(tm-v)}} (Cn^{-\gamma})^{\chi}(cM^{-1})^{v}. \end{equation}
The sum in $\chi$ is now started at $v-tm-1$. We have 
\begin{align}
\mathcal{H}_2 &\leqslant \sum_{v=mt+2}^{2mt} \frac{ M n^{\frac{25}{\A} } }{c \delta^{2(tm-v)}} \sum_{\mathbf{i}}  \left( \frac{c}{M} \right)^{v}\left( \prod_{i \in \mathbf{i}} \frac{d_{i}^-}{ d_i^+} \right)  \left\lbrace \sum_{\chi=v-tm-1}^{2tm-v+1} \left( \frac{C}{n^{\gamma}} \right)^{\chi} \right\rbrace
\end{align}
The sum between braces is geometric and the ratio is $o(1)$, hence it is bounded by the first term times some constant close to $1$. The first term is $(Cn^{-\gamma})^{v-tm-1}$. We also have \eqref{rho_bound} and the fact $Mn^{25/\A}/c \leqslant n^2$ when $n$ is large enough. Putting it all together, we get 
\begin{align}
\mathfrak{H}_2 &\leqslant n^2 \sum_{v=mt+2}^{2mt} \frac{(c\rho)^{2v}  (C n^{-\gamma})^{v-tm-1}  }{ \delta^{2(tm-v)}}.
\end{align}
This is indeed a geometric sum and the ratio is of order $ O(n^{-\gamma})$. After quick simplifications left to the reader, we get $\mathfrak{H}_2 \leqslant n^2 (c\tilde{\rho})^{2tm}C n^{-\gamma}$ which is also generously bounded by $o(1)n^3(c\tilde{\rho})^{2tm}$ when $n$ is large.

\subsection{Bound for $\mathcal{L}$}

In this sum, $v>tm+1$ and $\chi \leqslant v-tm-1$. The main difference with the two other regions is the extra term in the bound for $f(\mathbf{p})$. We use Proposition \ref{regions} and \eqref{combidefinitive}. 
\begin{multline*}
 \sum_{\mathbf{p} \in X_m^{a,v}(\mathbf{i})} |f(\mathbf{p})|\leqslant \\  \left( \prod_{i \in \mathbf{i}} d_{i}^+ d_i^- \right) C^\chi n^{\frac{25}{\A} + \frac{17}{\A}  \chi} \frac{n^{o(1)}}{\delta^{2(tm-v)}} \prod_{i \in \mathbf{i}} \left(\frac{1}{d_i^+}\right)^2  \left( \frac{C}{M} \right)^{\chi} \left( \frac{c}{M} \right)^{v-1} \left( \frac{6tm}{\sqrt{M}}\right)^{2(v-tm-1-\chi)}
\end{multline*}
This can be simplified when $n$ is large enough to
\begin{equation}\label{borne:LPM_light}
  \left(\frac{c}{M}\right)^v \prod_{i \in \mathbf{i}} \frac{d_i^-}{d_i^+} \frac{n^{\frac{27}{50}}}{\delta^{2(tm-v)}}   (Cn^{1-\gamma} )^{\chi}  \left( \frac{6tm}{\sqrt{M}}\right)^{2(v-tm-1)}
\end{equation}

We plug \eqref{borne:LPM_light} into the definition of $\mathcal{L}$ and we use \eqref{rho_bound}:
\begin{align}
\mathcal{L} &\leqslant  \sum_{v=mt+2}^{2mt} ~~ \sum_{\mathbf{i}} ~~\sum_{\chi=0}^{v-tm-2} \left(\frac{c}{M}\right)^v \prod_{i \in \mathbf{i}} \frac{d_i^-}{d_i^+} \frac{n^{\frac{27}{50}}}{\delta^{2(tm-v)}}   \left( Cn^{1-\gamma} \right)^{\chi}  \left( \frac{6tm}{\sqrt{M}}\right)^{2(v-tm-1)}\\
&\leqslant  \sum_{v=mt+2}^{2mt}  \frac{(c\tilde{\rho})^{2v} n^{\frac{27}{50}}}{\delta^{2(tm-v)}}   \left( \frac{6tm}{\sqrt{M}}\right)^{2(v-tm-1)} \left\lbrace \sum_{\chi=0}^{v-tm-2}\left(Cn^{1-\gamma} \right)^{\chi}   \right\rbrace.
\end{align}
As for other regions, the term between braces is a geometric with ratio greater than $1$ so it is bounded by $(Cn^{1-\gamma})^{v-tm-1}$. We are now left with a sum in $v$ 
\begin{align}
\mathcal{L}&\leqslant \sum_{v=mt+2}^{2mt}  \frac{(c\tilde{\rho})^{2v} n^{\frac{27}{50}}}{\delta^{2(tm-v)}}   \left( \frac{6tm}{\sqrt{M}}\right)^{2(v-tm-1)}(Cn^{1-\gamma})^{v-tm-1} 
\end{align}
and this is generously bounded by $o(1)n^2 (c\tilde{\rho})^{2tm}$; note that $\mathcal{L}$ is negligible in front of $\mathcal{H}_1$, $\mathcal{H}_2$.

\section{Proof of Proposition \ref{control2}.}\label{section:tangled_rest}

We now prove Proposition \ref{control2}. The strategy is exactly the same as for Proposition \ref{control1} and runs along the lines of its proof. We omit the details. First, we recall \eqref{def:tangled_rest}: 
$$R^{t,\ell}(i,j) = \sum_{\mathbf{p} \in \mathscr{R}^{t,\ell}(i,j)} \prod_{s=1}^{\ell-1} A(\mathbf{e}_s, \mathbf{f}_s ) \frac{1}{d^+_{\mathbf{e}_{\ell}}} \prod_{s=\ell+1}^t \underline{A}(\mathbf{e}_{s}, \mathbf{f}_s )$$
where $\mathscr{R}^{t,\ell} (i,j)$ had been defined in Definition \ref{defin:RTL} on page \pageref{defin:RTL}.

\subsection{Trace method.}

We note $Y(\mathbf{p}) =  \prod_{s=1}^{\ell-1} A(\mathbf{e}_s, \mathbf{f}_s ) \big(d^+_{\mathbf{e}_{\ell}} \big)^{-1} \prod_{s=\ell+1}^t \underline{A}(\mathbf{e}_{s}, \mathbf{f}_s )$ when $\mathbf{p}$ is in $\mathscr{R}^{t,\ell}$.  Using the classical trace method as in Subsection \ref{subsec:trace}, we find 
\begin{align}
\Vert R^{t, \ell}\Vert ^{2m} &\leqslant \sum_{i_1, ..., i_{2m}} \prod_{s=1}^m R^{t, \ell} (i_{2s-1}, i_{2s}) R^{t, \ell}(i_{2s+1}, i_{2s}).  \\
&\leqslant \sum_{i_1, ..., i_{2m}} \prod_{s=1}^m \left( \sum_{\mathbf{p} \in \mathscr{R}^{t, \ell}(i_{2s-1}, i_{2s})}Y(\mathbf{p}) \right) \left( \sum_{\mathbf{p} \in \mathscr{R}^{t, \ell}(i_{2s+1}, i_{2s})}Y(\mathbf{p}) \right) \\
&\leqslant \sum_{i_1, ..., i_{2m}}\sum_{(\mathbf{p}_1, ..., \mathbf{p}_{2m})} \prod_{s=1}^{2m} Y(\mathbf{p}_i)
\end{align} 
where the sum is over all $2m$-tuples $(\mathbf{p}_1, ..., \mathbf{p}_{2m})$ such that $\mathbf{p}_{2s}$ is in $\mathscr{R}^{t,\ell}(i_{2s-1}, i_{2s})$ and $\mathbf{p}_{2s+1}$ is in $\mathscr{R}^{t,\ell}(i_{2s+1}, i_{2s})$; note that we used the cyclic convention $i_{2m+1} = i_1$. Now, going back to the definition of $R^{t, \ell}$, we have 
\begin{equation}\label{ERTL}
\mathbf{E}[\Vert R^{t,\ell}\Vert ^{2m}] \leqslant \sum_{\mathbf{p} \in \mathscr{C}'_{m,\ell}}|g(\mathbf{p}) |
\end{equation}
where $\mathscr{C}'_{m,\ell}$ and $g$ are now defined in the same fashion as $\mathscr{C}_m$ and $f$ in Section \ref{strat}. 

\begin{defin}
$\mathscr{C}'_{m,\ell}$ is the set of $2m$-tuples $(\mathbf{p}_1, ..., \mathbf{p}_{2m})$ such that 
\begin{itemize}
\item $\mathbf{p}_s$ is in $R^{t,\ell}$ for every $s$ odd, 
\item $\bar{\mathbf{p}}_s$ is in $R^{t,\ell}$ for every $s$ even,  where $\bar{\mathbf{p}}_s$ denotes path $\mathbf{p}_s$ ``reversed", 
\item the beginning vertex of $\bar{\mathbf{p}}_{2s}$ is the beginning vertex of $\mathbf{p}_{2s+1}$
\item the endvertex vertex of $\mathbf{p}_{2s-1}$ is the endvertex of $\bar{\mathbf{p}}_{2s}$. 
\end{itemize}
\end{defin}

Finally, for every $\mathbf{p}=(\mathbf{p}_1, \dotsc, \mathbf{p}_{2m})$ in $\mathscr{C}'_{m,\ell}$, we set
 \begin{equation}
 \label{def:g}
 g(\mathbf{p}) = \prod_{s=1}^{m} Y(\mathbf{p}_{2i-1})Y(\bar{\mathbf{p}}_{2i}).
 \end{equation}

\begin{remarque}\label{rq:tangle}
If $\mathbf{p}_i$ is in $\mathscr{R}^{t,\ell}$, then it has at least two cycles. This fact has two consequences:  the number of vertices visited by $\mathbf{p}_i$ is smaller than $t-2$, and the tree excess $\chi(\mathbf{p}_i)$ is greater than $2$. When this is applied to $\mathbf{p}$, we get the following facts: 
\begin{itemize}
\item $\mathbf{p}$ visits no more than $2tm - 2m=2m(t-1) $ vertices.
\item $\chi(\mathbf{p})$ is greater than $4m$.
\end{itemize}

\end{remarque}

Our task is to prove Proposition \ref{control2}. To this end, we are going to prove the following lemma: 

\begin{lem}If $n$ is large enough, then 
\begin{equation}\label{lem:RTL}\mathbf{E}[\Vert  R^{t,\ell} \Vert ^{2m}] = o (1)n^{2m+3} (c\tilde{\rho})^{2m(t+\ell)} 
\end{equation}
with $D>0$ a constant (we can take $D=100$).
\end{lem}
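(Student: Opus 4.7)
The plan is to follow verbatim the pattern of the proof of Proposition \ref{control1}. The trace method and the identity \eqref{ERTL} reduce the claim to showing
\[
\sum_{\mathbf{p} \in \mathscr{C}'_{m,\ell}} |g(\mathbf{p})| = o(1)\, n^{2m+3}(c\tilde{\rho})^{2m(t+\ell)},
\]
after which Markov's inequality applied to \eqref{lem:RTL} (with the target threshold $n\ln(n)^D(c\tilde{\rho})^{t+\ell}$) yields Proposition \ref{control2} for $D$ large enough (for example $D = 3\A/2 = 75$, so that $\ln(n)^{2mD} \gg n^3$). I would then decompose this sum over $v$, $\chi$ and $\mathbf{i}=(i_1,\dotsc,i_v)$ exactly as in \eqref{DECO-avi}, but exploiting the two crucial constraints of Remark \ref{rq:tangle}: every $\mathbf{p} \in \mathscr{C}'_{m,\ell}$ visits $v \leqslant 2m(t-1)$ vertices and has tree excess $\chi \geqslant 4m$. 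These structural restrictions are the engine of the proof.

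For the combinatorial count, the argument of Section \ref{sec:combi} adapts with minimal effort: the enumeration of equivalence classes via spanning trees and short/long cycling times goes through unchanged, while the extra data (the $t$ possible bridge positions per sub-path) is absorbed in the $n^{o(1)}$ factor of Proposition \ref{prop:combidefinitive}. On the analytic side, one writes $g(\mathbf{p})$ as the deterministic product of $2m$ bridge factors $1/d^+_{\mathbf{e}_{i,\ell}} \leqslant \delta^{-1}$ times the expectation of a product of $2m(\ell-1)$ non-centered $A$-factors and $2m(t-\ell)$ centered $\underline{A}$-factors. Since the expectation is insensitive to the ordering of the product, one can regroup the non-bridge edges and invoke Theorem \ref{tech} with $N = 2m(t-1)$ and $p = 2m(t-\ell)$. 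The upshot is a pointwise bound on $|g(\mathbf{p})|$ that agrees with the statement of Proposition \ref{regions} up to a multiplicative $\delta^{-2m}$ absorbed in the eventual $(c\tilde{\rho})^{2m\ell}$ gain.

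The asymptotic analysis then proceeds along the lines of Section \ref{sec:asymptotic_analysis}: I split the triple sum into three regimes $\mathcal{H}_1, \mathcal{H}_2, \mathcal{L}$ mirroring \eqref{def:H1zone}-\eqref{def:Lzone} (with the critical threshold now at $v \approx m(t-1)+1$ instead of $mt+1$), sum over $\mathbf{i}$ using \eqref{rho_bound}, and perform geometric summations in $\chi$ and $v$. The decisive arithmetical input is that the lower bound $\chi \geqslant 4m$ restricts the $\chi$-sum to start at $4m$, introducing a factor $(Cn^{-\gamma})^{4m}$ in every term, where $\gamma = 1 - 17/\A > 1/2$ thanks to the choice $\A = 50$. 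Since $4m\gamma > 2m$, this saving comfortably absorbs both the enlarged target $n^{2m+3}$ and the $\delta^{-2m}$ from the bridges, while the $(c\delta\tilde{\rho})^{2v}$-geometric sum in $v$ contributes the expected $(c\tilde{\rho})^{2mt}$ decay; combined with the extra $(c\tilde{\rho})^{2m\ell}$ arising from the non-centered prefixes together with the bridges, this produces the full $(c\tilde{\rho})^{2m(t+\ell)}$.

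The main obstacle I anticipate is the precise bookkeeping for the $(c\tilde{\rho})^{2m\ell}$ factor: unlike the centered $\underline{A}$-edges, the pre-bridge $A$-factors are not centered, so they do not supply a $1/M$ each through the $a_1$-exponent of Theorem \ref{tech}. One has to track carefully how the number $a$ of distinct edges, the number $a_1$ of simple consistent \emph{centered} edges, and the bridge weight $\delta^{-2m}$ interact to produce the correct exponent of $c\tilde{\rho}$, and to verify that each of the three regimes $\mathcal{H}_1$, $\mathcal{H}_2$, $\mathcal{L}$ still contributes $o(1)\, n^{2m+3}(c\tilde{\rho})^{2m(t+\ell)}$ under the shifted thresholds. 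This is conceptually identical to Section \ref{sec:asymptotic_analysis} but cannot simply be quoted; it must be redone explicitly.
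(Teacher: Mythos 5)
Your overall route is the paper's: the trace bound \eqref{ERTL}, the structural constraints of Remark \ref{rq:tangle}, an adapted equivalence-class count, Theorem \ref{tech} applied to the path with the $2m$ bridges deleted, and a three-regime asymptotic analysis. The problem is that the two quantitative claims you make about how the bookkeeping will come out are wrong, and they sit exactly where you defer the work. First, the critical threshold does not move down to $v\approx m(t-1)+1$; it moves \emph{up}. Since the $\ell-1$ non-centered edges and the bridge of each sub-path (i.e.\ $2m\ell$ slots in total) can never be counted among the simple consistent \emph{centered} edges, one only gets $a_1\geqslant \big(2\big((v-1)-(t+\ell)m-\chi\big)\big)_+$, so the extra $\big(Ctm/\sqrt{M}\big)$-gain is available only when $\chi\leqslant v-(t+\ell)m-1$, and the regimes must be cut at $v\approx m(t+\ell)+1$ (this is Proposition \ref{regions2}). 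With your cut at $m(t-1)+1$, the intermediate range $m(t-1)+1<v\leqslant m(t+\ell)+1$ has no $a_1$-gain to call upon, and the $\mathcal{H}_1$-type geometric sum only reaches $(c\tilde{\rho})^{2m(t-1)}$, short of the target by the large factor $(c\tilde{\rho})^{-2m(\ell+1)}$.

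Second, the factor $(c\tilde{\rho})^{2m\ell}$ does not ``arise from the non-centered prefixes together with the bridges''. The pre-bridge $A$-edges already pay their $1/M$ through the exponent $a$ in Theorem \ref{tech} (hence through the $(c\rho)^{2v}$ obtained after summing over $\mathbf{i}$) and cannot be charged twice; the bridges contribute only $\prod 1/d^+\leqslant \delta^{-2m}$ and, since they carry no matching indicator, actually \emph{cost} a factor of order $M^{2m}$ --- this is the $(c/M)^{v-1-2m}$ in Proposition \ref{regions2} and the reason the target is $n^{2m+3}$ rather than $n^{3}$. In the actual proof the additional decay comes from the threshold shift itself: in the regime $v\leqslant m(t+\ell)+1$ the geometric sum in $v$ (ratio $c\delta\tilde{\rho}\geqslant 1$) is dominated by its top term, which after dividing by $\delta^{2tm}$ gives $(c\tilde{\rho})^{2m(t+\ell)}\delta^{2m\ell}$, and the leftover $\delta^{2m\ell}$ --- which can be as large as $n^{2m\alpha\ln\delta/\ln\Delta}$, not a harmless $\delta^{-2m}$ --- is precisely what the saving $(Cn^{-\gamma})^{4m}$ from $\chi\geqslant 4m$, together with the smallness of $\alpha$, must kill; the enlarged target $n^{2m+3}$ needs no absorbing at all. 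So the bookkeeping you postpone is not a routine rerun of Section \ref{sec:asymptotic_analysis}: as stated, your accounting would fail, and fixing it means deriving the $a_1$-bound above and recutting the regimes at $m(t+\ell)+1$ as the paper does.
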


\begin{proof}[Proof of Proposition \ref{control2} using \eqref{lem:RTL}]By the Markov inequality, we have
\begin{align*}
\mathbf{P}\big(\Vert R^{t,\ell}\Vert > n\ln(n)^D (c\tilde{\rho})^{t+\ell}) &\leqslant \frac{\mathbf{E}[\Vert R^{t,\ell}\Vert ^{2m}]}{n^{2m} \ln(n)^{2Dm}(c\tilde{\rho})^{2m(t+\ell)}}\\
&\leqslant \frac{o(1)n^3 n^{2m} (c\tilde{\rho})^{2m(t+\ell)}}{n^{2m} \ln(n)^{2Dm}(c\tilde{\rho})^{2m(t+\ell)}} \leqslant o(1)n^{3 - \frac{2D}{\A}} .
\end{align*}
If $D$ is chosen great enough ($D=100$ is sufficient), then the last term goes to zero and we get $\mathbf{P}\big(\Vert R^{t,\ell}\Vert > n\ln(n)^D (c\tilde{\rho})^{t+\ell}) = o(1)$, which is the desired result.
\end{proof}

We are now going to prove \eqref{lem:RTL}, first studying the combinatorics of $\mathscr{C}'_{m,\ell}$, then bounding $g(\mathbf{p})$ and finally doing the asymptotic analysis.

\subsection{Combinatorics of $\mathscr{C}'_{m,\ell}$.}\label{subsec:combi2}

We split $\mathscr{C}'_{m,\ell}$ into disjoints subsets. 

\begin{defin}Let $a,v$ be integers and let $\mathbf{i}=(i_1, ..., i_v)$ a $v$-tuple of vertices. We define
$$ X^{v,a}_{m,\ell}(\mathbf{i}) = X_{m,\ell}^{v,a}(i_1, ..., i_v)$$
as the set of all the paths in $\mathscr{C}'_{m,\ell} $ whose vertex set is precisely $(i_1, ..., i_v)$ (in this order) and who have $a$ edges. 

Let $\mathbf{p}$ and $\mathbf{p}'$ be two paths in $\mathscr{C}'_{m,\ell}$; we note $\mathbf{e}_{i,s}, \mathbf{f}_{i,s}$ the half-edges of $\mathbf{p}$ and $\mathbf{e}'_{i,s}, \mathbf{f}'_{i,s}$ those of $\mathbf{p}'$. Those paths are said \emph{equivalent} if 

\begin{itemize}
\item they both belong to $X^{a,v}_{m,\ell}(\mathbf{i})$ and they visit the same vertices at the same time, 
\item for every vertex $u \in \mathbf{i}$, there are two permutations $\sigma_u  \in \mathfrak{S}_{d^+_u}$ and $\tau_u \in \mathfrak{S}_{d_u^-}$ such that for every $i$ and $s$, if $\mathbf{e}_{i,s}$ is a head attached to $u$ and $\mathbf{f}_{i,s}$ a tail attached to $u$, then
$$\mathbf{e}_{i,s} = \sigma_u (\mathbf{e}'_{i,s}) \quad \text{and} \quad \mathbf{f}_{i,s} = \tau_u (\mathbf{f}'_{i,s}). $$
\end{itemize}

\end{defin}

Indeed, two paths are equivalent if they only differ by a permutation of their half-edges. 
We state again Lemma \ref{lem:cardinal_equiv_classes}. Its proof remains unchanged, and Lemma \ref{b} is also true in this case. 

\begin{lem}\label{lem:cardinal_equiv_classes2}
Let $\mathbf{p}$ be a path in $X^{v,a}_{m,\ell} (\mathbf{i})$. Then, we have at most
\begin{equation}
C^\chi \prod_{i \in \mathbf{i}} d^+_{i}d^-_{i}
\end{equation}
paths equivalents to $\mathbf{p}$, where $C$ is a constant.
\end{lem}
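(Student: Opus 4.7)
The plan is to mimic verbatim the proof of Lemma \ref{lem:cardinal_equiv_classes}. The key observation is that the counting identities used there are purely graph-theoretic and depend only on the triple $(v, a, \mathbf{i})$, not on whether the paths come from $\mathscr{C}_m$ or $\mathscr{C}'_{m,\ell}$. Indeed, once we fix a representative $\mathbf{p}$ visiting vertices $(i_1,\dotsc,i_v)$ in this order with $a$ distinct edges, the combinatorics of choosing an equivalent path depends only on how many half-edges at each vertex are touched by $\mathbf{p}$; the fact that the constituent sub-paths are in $\mathscr{R}^{t,\ell}$ rather than $\mathscr{T}^t$ plays no role in this counting.

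Concretely, I would reuse the notations $\alpha_i$ (number of distinct heads at vertex $i$ appearing in $\mathbf{p}$) and $\beta_i$ (number of distinct tails). For an equivalent path one must assign, at each vertex $u$, an injection of the used heads into $E^+(u)$ and of the used tails into $E^-(u)$; hence the number of equivalents is at most
\[
\prod_{i\in \mathbf{i}} (d_i^+)_{\alpha_i}(d_i^-)_{\beta_i} = \Big(\prod_{i\in\mathbf{i}} d_i^+ d_i^-\Big)\prod_{i\in\mathbf{i}} (d_i^+-1)_{\alpha_i - 1}(d_i^- - 1)_{\beta_i - 1}.
\]
Bounding each Pochhammer factor by $(\Delta-1)^{\alpha_i-1}$ and $(\Delta-1)^{\beta_i-1}$ as in the original proof, and then regrouping through the counts $K^\pm_t := \#\{i : \alpha_i = t \text{ (resp.\ } \beta_i = t\text{)}\}$, I would use the two elementary identities $\sum_t K^\pm_t = v$ and $\sum_t t K^\pm_t = a$ (the number of distinct heads equals the number of distinct edges, and similarly for tails). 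These yield $\sum_t (t-1)K^\pm_t = a-v$, so the residual product is bounded by $(\Delta-1)^{2(a-v)} \leqslant C^{\chi}$ with $C = (\Delta-1)^2$.

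There is no real obstacle here: the argument is literally the same as for Lemma \ref{lem:cardinal_equiv_classes}, because nothing in that proof used the tangle-freeness of the sub-paths, only the fact that $\mathbf{p}$ is a sequence of concatenated paths visiting a prescribed vertex set with a prescribed edge count. The only mild sanity check is that the identities $\sum_i \alpha_i = a = \sum_i \beta_i$ still hold, which follows simply from counting the distinct edges of $\mathbf{p}$ by their head-endpoint (resp.\ tail-endpoint); this is independent of the tangle structure. Combining gives the claimed bound $C^\chi \prod_{i\in\mathbf{i}} d_i^+ d_i^-$, and one may even take the same constant $C = (\Delta-1)^2$ as before.
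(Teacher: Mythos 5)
Your proposal is correct and is exactly the paper's argument: the paper proves this lemma by simply noting that the proof of Lemma \ref{lem:cardinal_equiv_classes} carries over unchanged, since the counting of equivalent paths only uses the vertex sequence, the number of distinct half-edges used at each vertex, and the bound $(\Delta-1)^{2(a-v)}\leqslant C^\chi$, none of which depend on the tangle structure of the sub-paths.
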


\subsection{Number of equivalence classes.}

Now, we count the number of equivalence classes in $X^{a,v}_{m,\ell}(\mathbf{i})$. The explored vertices are $\mathbf{i} = (i_1, ..., i_v)$, in this order.

In any equivalence class, we choose a path $\mathbf{p}$ visiting heads and tails in the ``alternating lexicographic order" in the same fashion as in \ref{subsub:choice}. The chosen path $\mathbf{p}$ will be called the \emph{representative path} of the class $X^{a,v}_{m,\ell}(\mathbf{i})$. We build the tree $T$ in the exact same way.

Cycling times are defined in the same way, but now another phenomenon can occur: there can be more than one cycle inside one subpath $\mathbf{p}_i$. However, the path $\mathbf{p}_i$ is composed of two subpaths, say $\mathbf{p}_i'$ and $\mathbf{p}_i''$, linked by a single edge\footnote{Which can also be considered as a tangle-free path of length $1$.}, and inside one of the two paths $\mathbf{p}_i', \mathbf{p}_i''$, there can be no more than one cycle. Thus, a small variation of the code for $\mathscr{C}_m$ will be sufficient for our purpose. To this end, define the \emph{bridging time}

$$\ell_i := \begin{cases}
\ell-1 \text{ if } i \text{ is even} \\
t-\ell \text{ else.}
\end{cases}$$

\subsubsection{Short cycling times.}
 Each sub-path $\mathbf{p}'_i, \mathbf{p}_i''$ is tangle-free. Let $r'_i$ denotes the first time when $\mathbf{f}_{i,r'_i}$ is attached to a vertex \emph{already visited by $\mathbf{p}'_i$}, and similarly $r''_i$ for $\mathbf{p}''_i$.  Those are \emph{short cycling times}.

 If these cycling times does not exist, we artificially set them to be the symbol $\otimes$. Let $\sigma'_i, \sigma''_i$ be the first time when the path $\mathbf{p}'_i, \mathbf{p}''_i$ left this vertex after its first visit. Finally, note $h'_i, h''_i$ the ``total time spent in the loop".

 We mark the cycling times $r'_i, r''_i$ as follows: 
\begin{equation}
(j'_{i,r'_i}, \mathbf{f}_{i,r'_i}, h'_i, \mathbf{e}_{i, \tau'_i}, u'_i) \quad \text{and}\quad (j''_{i,r''_i}, \mathbf{f}_{i,r''_i}, h''_i, \mathbf{e}_{i, \tau''_i}, u''_i)
\end{equation}
and if $r'_i, r''_i = \otimes$ this mark is set to be $\emptyset$.

We also have to deal with what happens at the bridge between $\mathbf{p}'_i$ and $\mathbf{p}''_i$. To this end, we simply mark the bridging time $\ell_i$ with the whole bridge, that is we set 
$$\beta_i = (\mathbf{e}_{i,\ell_i}, \mathbf{f}_{\ell_i+1}).$$

All those informations are enough to reconstruct the short cycling times and the bridge. Note that we did not fully exploit the $R^{t,\ell}$-structure of the paths $\mathbf{p}_i$: in particular, we did not use the fact that in the end, $\mathbf{p}_i$ is tangled. This will be used further.

Let us count those codes. We have at most two short cycling time per $\mathbf{p}'_i$ or $\mathbf{p}''_i$. There are $\ell_i$ choices for the first short cycling time and at most $\Delta (v\Delta)t(v\Delta)v = t \Delta^2 v^3$ choices for its mark, then there are at most $\Delta v$ choices for the bridge, and finally there are at most $t - \ell_i$ choices for the second short cycling time and $t\Delta^2 v^3$ choices for its mark.

\subsubsection{Long cycling times.}
Let $(i,t)$ be a cycling time  leading to the (already known) vertex $u$. If $(i,t)$ is not a short cycling time, then
\begin{enumerate}
\item either $u$ belongs to the verties discovered by some $\mathbf{p}_j$ with $j<i$, 
\item either $t>\ell_i$ and $u$ belongs to the vertices discovered by $\mathbf{p}'_i$. 
\end{enumerate}

In either cases, we say $(i,t)$ is a long cycling time. We mark long cycling times with a triple
\begin{equation}\label{LCM2}(j_{i,t}, \mathbf{f}_{i,t}, u_i ).
\end{equation}

where $j_{i,t}$ is the index of the head \footnote{Or the head, depending on the parity of $i$.} by which we're leaving the current vertex, $\mathbf{f}_{i,t}$ is the tail we are going to, and $u_i$ is the next vertex when we will be leaving the tree $T$. For every long cycling time, there are at most $\Delta^2 v^2$ marks like \eqref{LCM2}.

\subsubsection{Superfluous times.}Superfluous cycling times are defined as in \ref{subsec:superfluous} and play no role in the sequel.

\subsubsection{Total number.}
We now gather the number of different types of marks to get a bound on the number of equivalence classes in $\mathscr{C}'_{m,\ell}$.

\begin{prop}
The total number of equivalence classes of paths in $\mathscr{C}'_{m,\ell}$ visiting vertices $\mathbf{i}=(i_1, ..., i_v)$ and having $a$ edges is at most
\begin{equation}\label{counting2}
4^{-m}(2\Delta tm)^{4m\chi + 22m}.
\end{equation}
\end{prop}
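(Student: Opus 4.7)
I would adapt the coding and counting scheme from Lemma~\ref{prop:counting} with two modifications reflecting the $\mathscr{R}^{t,\ell}$-structure of each subpath. Now every $\mathbf{p}_i$ decomposes as $\mathbf{p}'_i\cdot\beta_i\cdot\mathbf{p}''_i$, i.e.\ two tangle-free pieces joined by a single bridge edge, so it can carry up to \emph{two} short cycling times instead of one, and a small extra mark is needed to record the bridge itself. Within each equivalence class of $X_{m,\ell}^{v,a}(\mathbf{i})$ I pick the representative whose half-edges follow the alternating lexicographic order of Section~\ref{subsub:choice}, and grow a spanning tree $T$ on the vertex sequence $\mathbf{i}$ as new vertices are discovered.

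Every excess edge in $\mathbf{p}$ produces a cycling time, classified as \emph{short} (first collision inside $\mathbf{p}'_i$ or $\mathbf{p}''_i$), \emph{long} (collision with a vertex discovered earlier by some $\mathbf{p}_j$, $j<i$, or by $\mathbf{p}'_i$ when the time exceeds $\ell_i$), or \emph{superfluous} (later visits to an already identified cycle, absorbed into the mark of the associated short cycling time). Short cycling times carry the same five-tuple $(j,\mathbf{f},h,\mathbf{e},u)$ as in Lemma~\ref{prop:counting}, giving at most $t\Delta^3 v^3$ marks each; long cycling times carry a triple $(j,\mathbf{f},u)$, giving at most $\Delta^2 v^2$ marks each; the bridge $\beta_i=(\mathbf{e}_{i,\ell_i},\mathbf{f}_{i,\ell_i+1})$ contributes an extra mark of size at most $\Delta v$ per $\mathbf{p}_i$. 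Together with $T$ and the lexicographic rule, these data reconstruct $\mathbf{p}$ uniquely in its class, exactly as in Remark~\ref{remark_uncoding}.

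Aggregating over the $2m$ subpaths, the positions and marks of the (at most) two short cycling times per $\mathbf{p}_i$ give at most $\bigl(t\cdot(t\Delta^3 v^3)\bigr)^{4m}$ codings; the bridges give $(\Delta v)^{2m}$; the long cycling times give at most $\bigl(t(\Delta v)^2\bigr)^{2m\chi}$, using the same crude bound ``at most $\chi$ long cycling times per $\mathbf{p}_i$'' as in Lemma~\ref{prop:counting}. Multiplying, bounding $v\leq 2tm$, $\Delta v\leq 2\Delta tm$ and $t\leq 2\Delta tm$, and invoking the asymptotic estimates of Lemma~\ref{asymptotic_lemma}, the product collapses into an expression of the form $(2\Delta tm)^{Am\chi+Bm}$. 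Using $\chi\geq 4m$ from Remark~\ref{rq:tangle} to trade $2m\chi$-exponents against $m$-exponents absorbs a harmless $4^{-m}$ factor, yielding the announced bound $4^{-m}(2\Delta tm)^{4m\chi+22m}$.

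The only genuinely new ingredient compared with Lemma~\ref{prop:counting} is the bridge, which contributes only the flat factor $(\Delta v)^{2m}$. The real bookkeeping subtlety — and the place where I expect the proof to require the most care — is verifying that two short cycling marks per $\mathbf{p}_i$ really do suffice: any subsequent collision inside $\mathbf{p}_i$ either belongs to a cycle already identified by one of the two short marks (hence is superfluous) or is a genuine long cycling time involving vertices outside the discovered set of $\mathbf{p}_i$. This is where the tangle-free nature of $\mathbf{p}'_i$ and $\mathbf{p}''_i$ individually, combined with the unambiguous bridge mark, becomes essential.
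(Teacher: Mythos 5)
You follow the paper's route exactly (representative path in alternating lexicographic order, spanning tree, at most two short cycling marks per $\mathbf{p}_i$ thanks to the decomposition of $\mathbf{p}_i$ into $\mathbf{p}'_i$, a bridge, and $\mathbf{p}''_i$, plus long cycling marks), but the last step — producing the prefactor $4^{-m}$ and the exponent $4m\chi+22m$ — has a genuine gap. In the paper the $4^{-m}$ is combinatorial in origin: the two short cycling times of $\mathbf{p}_i$ must lie one in $\mathbf{p}'_i$ and one in $\mathbf{p}''_i$, pieces of lengths $\ell-1$ and $t-\ell$, so the number of pairs of positions is $\ell_i(t-\ell_i)\leqslant t^2/4$, and the product over the $2m$ subpaths already carries the factor $4^{-m}$ before any marks are counted. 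You instead allow $t\cdot t$ positions per subpath and propose to recover $4^{-m}$ from $\chi\geqslant 4m$ by ``trading exponents''. That trade cannot work: since $2\Delta tm\geqslant 4$, one has $4^{-m}(2\Delta tm)^{4m\chi+22m}\geqslant(2\Delta tm)^{4m\chi+21m}$, so absorbing the $4^{-m}$ would require your raw count to be \emph{smaller} than the target by at least a factor $4^m$; but your stated factors multiply to $\bigl(t^2\Delta^3v^3\bigr)^{4m}(\Delta v)^{2m}\bigl(t(\Delta v)^2\bigr)^{2m\chi}=t^{8m+2m\chi}\Delta^{14m+4m\chi}v^{14m+4m\chi}\leqslant(2\Delta tm)^{6m\chi+22m}$, which exceeds $(2\Delta tm)^{4m\chi+21m}$ both in the $\chi$-coefficient and in the $m$-coefficient. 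A lower bound on $\chi$ can only increase powers of $2\Delta tm$; it can never manufacture a factor smaller than $1$. (In fact $\chi\geqslant 4m$ from Remark \ref{rq:tangle} plays no role in this counting statement; the paper uses it only later, in the asymptotic analysis of $\mathcal{H}'_1$.)

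The fix is to count the short-cycling positions as the paper does, via $\prod_{i=1}^{2m}\ell_i(t-\ell_i)\leqslant(t^2/4)^{2m}$, which is the honest source of the $4^{-m}$, and then to group the remaining factors so that the $\chi$-dependence is carried by $(\Delta v)^{4m\chi}\leqslant(2\Delta tm)^{4m\chi}$. Note also that your long-cycling factor $\bigl(t(\Delta v)^2\bigr)^{2m\chi}$ contributes $t^{2m\chi}$ and hence a $\chi$-coefficient of $6m$ after substituting $v\leqslant 2tm$, so the announced $4m\chi$ does not follow from your count as written: if the positions of the long cycling times are kept as a separate $t^{2m\chi}$ factor you need either extra slack in the exponent (as in Lemma \ref{prop:counting}, whose stated bound $(2\Delta tm)^{8m\chi+12m}$ has room to spare) or an argument that these positions are determined by the other data. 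For the downstream application only the cruder bound $n^{\frac{45}{\A}+\frac{17}{\A}\chi}$ is needed, which your count does deliver, but the proposition as stated — with the $4^{-m}$ and the coefficient $4m\chi$ — is not established by your argument.
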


\begin{proof}

Recall the definitions of section \ref{strat} and the difference between \emph{edges} of $\mathbf{p}$ and \emph{graph-edges} of $\mathbf{p}$. Consider the undirected multi-graph spanned by the \emph{unoriented} graph-edges of $\mathbf{p}$ on vertices $\mathbf{i} = (i_1, ..., i_v)$. This graph is connected. Its total number of edges is at most $a$ (if no edge is visited two times in opposite directions\footnote{Observe that it is also at least $a/2$ if all edges are visited twice, in opposite directions. This will not be used in the proof.}. Therefore, there are at most $\chi := a-v+1$ excess edges. For each $i \leqslant 2m$, there are at most $\chi$ cycling times, \emph{a fortiori} there are at most $\chi$ long cycling times. Therefore, we have at most $t^{2m\chi}$ choices for the positions of the long cycling times. For each $i$, there are at most two cycling times, one before $\ell_i$ and one after. The total number of choices for these short cycling times is thus $\prod_{i=1}^{2m} \ell_i (k-\ell_i) = \ell^{2m}(t-\ell)^{2m} \leqslant 4^{-m}t^{2m}$.

For each one of these choices, we have the following number of possibilities for the marks: $(t\Delta^2 v^3)^{2 \times 2m} $ for short cyclings, $(\Delta v)^{4m}$ for bridges, $(\Delta^2 v^2)^{2m\chi}$ for long cyclings. The total number of codings is at most $4^{-m} t^{6m} \Delta^{12m + 4m\chi} v^{16m + 4m\chi}$ which (using $v \leqslant 2tm$) is largely bounded by \eqref{counting2}. 

\end{proof}

Using the asymptotic properties exposed in lemma \ref{asymptotic_lemma}, the reader can check that \eqref{counting2} is  bounded by $n^{\frac{45}{\A} + \frac{17}{\A}\chi}$ when $n$ is large enough. Using Lemma \ref{lem:cardinal_equiv_classes2}, we get the following variant of Proposition \eqref{prop:combidefinitive}:

\begin{prop}\label{thm:combidefinitive2}
Fix $\ell, v,\mathbf{i}$ and $a$. Then, when $n$ is big enough we have
\begin{equation}\label{combidefinitive2}
\# X^{v,a}_{m,\ell}(\mathbf{i}) \leqslant  \left( \prod_{i \in \mathbf{i}} d_{i}^+ d_i^- \right)  C^\chi n^{\frac{45}{\A} + \frac{17}{\A}  \chi} .
\end{equation}

\end{prop}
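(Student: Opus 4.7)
The plan is to run exactly the same two-step scheme as in the proof of Proposition \ref{prop:combidefinitive}: first partition $X^{v,a}_{m,\ell}(\mathbf{i})$ into equivalence classes, then multiply the bound on the number of classes by the bound on the size of each class. The work for both steps has essentially already been done in the preceding subsections. The class-size bound
\[
\#\mathscr{E} \leqslant C^\chi \prod_{i \in \mathbf{i}} d_i^+ d_i^-
\]
is exactly Lemma \ref{lem:cardinal_equiv_classes2}, and the count of equivalence classes,
\[
\#\{\mathscr{E}\} \leqslant 4^{-m}(2\Delta t m)^{4m\chi + 22m},
\]
was just established through the coding procedure (short cycling times inside the two halves $\mathbf{p}'_i, \mathbf{p}''_i$ of each $\mathbf{p}_i$, the bridge marks $\beta_i$, long cycling times, and the lexicographic ordering convention for the representative path).

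The only remaining step is to convert the combinatorial bound $(2\Delta tm)^{4m\chi+22m}$ into a power of $n$ via Lemma \ref{asymptotic_lemma}. With $t = O(\ln n)$ and $m = \Theta(\ln n / \ln\ln n)$, one has $\ln(2\Delta tm) = (2+o(1))\ln\ln n$, so
\[
(2\Delta tm)^{m} = \exp\bigl(m \cdot \ln(2\Delta tm)\bigr) = n^{\frac{2}{\A}+o(1)}.
\]
Raising to the appropriate exponents and absorbing $o(1)$ into the constants gives, for $n$ sufficiently large,
\[
(2\Delta tm)^{4m\chi} \leqslant n^{\frac{17}{\A}\chi}, \qquad (2\Delta tm)^{22m} \leqslant n^{\frac{45}{\A}},
\]
which combine to the claimed exponent $n^{45/\A + 17\chi/\A}$. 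Multiplying the number of equivalence classes by the per-class bound from Lemma \ref{lem:cardinal_equiv_classes2} (and discarding the harmless $4^{-m}$ factor) yields
\[
\#X^{v,a}_{m,\ell}(\mathbf{i}) \leqslant C^\chi \prod_{i \in \mathbf{i}} d_i^+ d_i^- \cdot n^{\frac{45}{\A} + \frac{17}{\A}\chi},
\]
which is exactly the statement.

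Since every ingredient is already in place, there is no genuine obstacle; the only care required is to track the exponents $4m\chi + 22m$ (as opposed to $8m\chi + 12m$ in Proposition \ref{prop:counting}). The slightly larger constant coefficient $22$ reflects the extra marks needed to encode two short cycling times per subpath and one bridge, while the smaller coefficient $4$ in front of $m\chi$ accurately reflects that the long cycling times are still counted globally by $\chi$. The slight loss from $8/\A$ to $17\chi/\A$ and from $44/\A$ to $45/\A$ is just to absorb the $o(1)$ term arising from Lemma \ref{asymptotic_lemma} into a clean power of $n$.
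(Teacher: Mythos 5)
Your proposal is correct and follows the paper's own route: the paper likewise obtains \eqref{combidefinitive2} by multiplying the per-class bound of Lemma \ref{lem:cardinal_equiv_classes2} by the class count \eqref{counting2}, converted into the power $n^{\frac{45}{\A}+\frac{17}{\A}\chi}$ via Lemma \ref{asymptotic_lemma}. The only quibble is cosmetic: the ``loss'' you mention is from $8\chi/\A$ (not $8/\A$) to $17\chi/\A$, the larger exponent being kept to match Proposition \ref{prop:combidefinitive}.
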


\subsection{Analysis of $g$.}\label{sec:analysis_g}

We now bound $g(\mathbf{p})$ when $\mathbf{p}$ is in $\mathscr{C}'_{m,\ell}$, following the ideas in Section \ref{sec:analysis_f}. Recall the definition of $g$ as in \eqref{def:g}. When developping the terms in $Y$, if we note $\mathbf{p}_i = (\mathbf{e}_{i,s}, \mathbf{f}_{i,s})_{s\leqslant t}$ for $i$ odd and $\bar{\mathbf{p}}_i = (\mathbf{e}_{i,s}, \mathbf{f}_{i,s})_{s\leqslant t}$ for $i$ even, then we have 
\begin{equation}\label{eq:gprod}g(\mathbf{p}) = \prod_{i=1}^{2m}\frac{1}{d^+_{\mathbf{e}_{i,\ell}}} \times  \prod_{i=1}^{2m} \prod_{s < \ell} A(\mathbf{e}_{i,s}, \mathbf{f}_{i,s})\prod_{s>\ell}\underline{A}(\mathbf{e}_{i,s}, \mathbf{f}_{i,s}).\end{equation}

Fix a path $\mathbf{p}$ in $X^{a,v}_{m,\ell}(\mathbf{i})$. Lemma \ref{lem:weightp} remains exactly the same.

\begin{lem}\label{lem:weightp2}
There is a constant $C$ such that for every $\mathbf{p} \in X^{a,v}_{m,\ell}(\mathbf{i})$ we have 
\begin{equation}\label{weightp2}
\omega(\mathbf{p}) \leqslant n^{o(1)} \prod_{i \in \mathbf{i}} \left(\frac{1}{d_i^+}\right)^2  \frac{C^{\chi + a_1}}{\delta^{2(tm-v)}}.
\end{equation}
\end{lem}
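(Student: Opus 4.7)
The plan is to observe that the identity \eqref{weightp} is purely a statement about the multiset of half-edges appearing in $\mathbf{p}$, not about the finer structure (tangle-free vs.\ tangled, bridged or not) of the subpaths, so the argument given for Lemma~\ref{lem:weightp} transports almost verbatim. Concretely, any $\mathbf{p}\in X^{v,a}_{m,\ell}(\mathbf{i})$ is still a concatenation of $2m$ subpaths each of length $t$, so its total number of (head,tail)-steps is exactly $2tm$ and it has at most $2m$ boundary vertices (the beginnings and ends of the $\mathbf{p}_i$'s).

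I would define $V_s$ and $v_s$ as in the proof of Lemma~\ref{lem:weightp} (vertices visited exactly $s$ times, with $\sum_s v_s=v$ and $\sum_s s v_s=2tm$), and write
\[
\omega(\mathbf{p})=\prod_{s\geqslant 1}\prod_{i\in V_s}\Bigl(\frac{1}{d_i^+}\Bigr)^s
=\prod_{i\in\mathbf{i}}\Bigl(\frac{1}{d_i^+}\Bigr)^{2}\,\prod_{i\in V_1}d_i^+\,\prod_{s\geqslant 3}\prod_{i\in V_s}\Bigl(\frac{1}{d_i^+}\Bigr)^{s-2}.
\]
Using \eqref{H} and the identity $\sum_{s\geqslant 3}(s-2)v_s=2(tm-v)+v_1$ established exactly as in Lemma~\ref{lem:weightp}, this is bounded by $\prod_{i\in\mathbf{i}}(d_i^+)^{-2}\,\Delta^{v_1}\,\delta^{-2(tm-v)-v_1}$.

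It remains to bound $v_1$. I would argue as before: a vertex visited exactly once that is not a boundary vertex must be incident to at least two \emph{distinct} simple edges of $\mathbf{p}$ (one entering, one leaving); distinct such vertices use distinct simple edges, so non-boundary elements of $V_1$ inject into the set of simple edges. Since there are at most $2m$ boundary vertices, $v_1\leqslant 2m+a'_1$ where $a'_1$ is the number of simple edges of $\mathbf{p}$. Writing $a'_1=a_1+z'$ with $a_1$ the simple consistent edges and $z'\leqslant b$ the simple inconsistent ones, and invoking Lemma~\ref{b} (whose proof used no tangle-freeness and is therefore still valid here, as noted after Lemma~\ref{lem:cardinal_equiv_classes2}), we get $v_1\leqslant 2m+4\chi+a_1$. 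Absorbing $(\Delta/\delta)^{2m}=n^{o(1)}$ via Lemma~\ref{asymptotic_lemma} and setting $C=(\Delta/\delta)^4$ yields \eqref{weightp2}.

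The only step that requires any thought is confirming that the bound $b\leqslant 4\chi$ of Lemma~\ref{b} still applies, but this is a local counting statement about the multigraph $G(\mathbf{p})$ generated by the half-edges of $\mathbf{p}$, and such a multigraph is defined identically whether $\mathbf{p}$ lives in $\mathscr{C}_m$ or in $\mathscr{C}'_{m,\ell}$; no tangle-freeness of subpaths is used in its proof. I expect no real obstacle in the argument.
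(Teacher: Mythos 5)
Your proposal is correct and coincides with the paper's treatment: the paper simply asserts that the proof of Lemma \ref{lem:weightp} carries over unchanged to $X^{a,v}_{m,\ell}(\mathbf{i})$, and your transcription verifies exactly the points that make this legitimate (each $\mathbf{p}_i\in\mathscr{R}^{t,\ell}$ still has length $t$ so $\sum_s sv_s=2tm$, the gluing still yields at most $2m$ boundary vertices, the injection of non-boundary $V_1$ vertices into simple edges is purely local, and Lemma \ref{b} uses no tangle-freeness). Nothing further is needed.
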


Now comes the application of Theorem \ref{tech} to the second factor in the right of \eqref{eq:gprod}. Let $\mathbf{p}$ be a path in $X^{a,v}_{m,\ell}(\mathbf{i})$. In order to apply Theorem \ref{tech}, we need to define an auxiliary path, say $\hat{\mathbf{p}}$, by deleting each $\ell$-th edge in a subpath $\mathbf{p}_i$. We plug \eqref{weightp2} into the bound given by Theorem \ref{tech} to get 
$$|g(\mathbf{p})|\leqslant 24 n^{o(1)} \prod_{i \in \mathbf{i}} \left(\frac{1}{d_i^+}\right)^2  \frac{C^{\chi} 3^b}{\delta^{2(tm-v)}} \left( \frac{c}{M} \right)^{\hat{a}} \left(\frac{Cm(t-1)}{\sqrt{M}} \right)^{a_1}.$$
where $\hat{a}$ is the total number of edges of $\hat{\mathbf{p}}$, so $\hat{a} \geqslant a - 2m$ with $a$ the total number of edges of $\mathbf{p}$. Also, $a_1$ is now the number of simple, consistent edges that appear in the path $\hat{\mathbf{p}}$: 
\begin{itemize}
\item in $\mathbf{p}_i$, after $\ell$ if $i$ is odd, 
\item in $\mathbf{p}_i$, before $t-\ell$ if $i$ is even.
\end{itemize}

Such edges will be called \emph{good edges} just for this paragraph. Note $\bar{a}_1$ the total number of simple, consistent edges in $\mathbf{p}$; as there are no more than $2m\ell$ edges that are not good, we have $a_1 \geqslant (\bar{a}_1 - 2m\ell)_+$.

Let $\bar{a}'_1$ be the number of simple edges (not necessarily consistent) of $\mathbf{p}$ and $\bar{a}'_2$ be the number of other edges. It is clear that $\bar{a}'_1 + \bar{a}'_2 = a $ and $ \bar{a}'_1 + 2\bar{a}'_2 \leqslant 2mt$ so $\bar{a}'_1 \geqslant 2 (a - mt)$.  If $b$ is the number of inconsistent edges we have $\bar{a}_1 \geqslant \bar{a}'_1 - b$ so $\bar{a}_1 \geqslant 2(a-tm) - b$, and using Lemma \ref{b}, we get $\bar{a}_1 \geqslant 2(a-tm) - 4\chi $ and finally $a_1 \geqslant (2(a-tm) - 4\chi - 2\ell m)_+$.  We also have $24n^{o(1)} = n^{o(1)}$. Note that $ 2(a-tm) - 4\chi - 2\ell m =2\big((v-1) - (t + \ell)m - \chi \big)$.  Using once again Lemma \ref{b}, we get
\[
|g(\mathbf{p})|\leqslant n^{o(1)} \prod_{i \in \mathbf{i}} \left(\frac{1}{d_i^+}\right)^2  \frac{C^{ \chi}}{\delta^{2(tm-v)}} \left( \frac{c}{M} \right)^{a-2m} \left(\frac{C tm}{\sqrt{M}} \right)^{2\big((v-1) - (t + \ell)m - \chi \big)_+}.
\]

The $2\big((v-1) - (t + \ell)m - \chi \big)_+$ term is zero if and only if $\chi \geqslant v-tm-t\ell-1$, hence the following result.

\begin{prop}\label{regions2}
Let $\mathbf{p}$ be any path in $\mathscr{C}'_{m,\ell}$ with $v$ vertices and $a$ edges. Note $\chi = a-v+1$. There is a constant $C$ such that when $n$ is large enough, we have
\begin{itemize}
\item  If  $\chi \geqslant v-(t+\ell)m - 1$, then 
$$|g(\mathbf{p})|\leqslant  \frac{n^{o(1)}}{\delta^{2(tm-v)}} \prod_{i \in \mathbf{i}} \left(\frac{1}{d_i^+}\right)^2  \left( \frac{C}{M} \right)^{\chi} \left( \frac{c}{M} \right)^{v-1  -2m} .$$
\item  Else, $\chi \leqslant v-(t+\ell)m-1$ and in this case, 
$$|g(\mathbf{p})|\leqslant  \frac{n^{o(1)}}{\delta^{2(tm-v)}} \prod_{i \in \mathbf{i}} \left(\frac{1}{d_i^+}\right)^2  \left( \frac{C}{M} \right)^{\chi} \left( \frac{c}{M} \right)^{v-1} \left( \frac{C2tm}{\sqrt{M}}\right)^{2(v-tm - \ell m-1-\chi)}.$$

\end{itemize}
\end{prop}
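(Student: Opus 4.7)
The plan is to mirror the proof of Proposition \ref{regions}, handling the extra structure of $g(\mathbf{p})$ (uncentered block, bridge factor, centered block) by building an auxiliary proto-path. Unfolding $g$ as in \eqref{eq:gprod}, I get, for every $\mathbf{p}$, a product over $i=1,\dots,2m$ of the bridge factor $1/d^+_{\mathbf{e}_{i,\ell}}$, the uncentered factors $A(\mathbf{e}_{i,s},\mathbf{f}_{i,s})$ for $s<\ell$, and the centered factors $\underline A(\mathbf{e}_{i,s},\mathbf{f}_{i,s})$ for $s>\ell$. I would then form the auxiliary proto-path $\hat{\mathbf{p}}$ of length $N=2m(t-1)$ obtained by deleting the $2m$ bridge edges, and apply Theorem \ref{tech} to $\hat{\mathbf{p}}$ with the parameter $p$ chosen to separate the uncentered from the centered blocks.

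Theorem \ref{tech} then provides a bound of the shape $24\,\omega(\hat{\mathbf{p}})\,3^b\,(c/M)^{\hat a}(N/\sqrt M)^{a_1}$, where $\hat a$, $a_1$, and $b$ denote respectively the number of distinct edges of $\hat{\mathbf{p}}$, the number of simple consistent edges of $\hat{\mathbf{p}}$ located in the centered block, and the number of inconsistent edges. The analog Lemma \ref{lem:weightp2} controls $\omega(\hat{\mathbf{p}})$ together with the bridge factors by $n^{o(1)}\prod_i (1/d_i^+)^2\,C^{\chi+a_1}/\delta^{2(tm-v)}$. Lemma \ref{b} survives verbatim and yields $b\leqslant 4\chi$, so $3^b$ is absorbed into $C^\chi$; and obviously $\hat a \geqslant a - 2m$.

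The delicate piece is the lower bound on $a_1$. Writing $\bar a_1$ for the number of simple consistent edges of the full path $\mathbf{p}$, the double-counting argument used in the proof of Proposition \ref{regions} still gives $\bar a_1 \geqslant 2(a-tm) - 4\chi$. Among these, at most $2m\ell$ can fall in the uncentered region (before position $\ell$ inside each sub-path $\mathbf p_i$), hence
\[
a_1 \;\geqslant\; (\bar a_1 - 2m\ell)_+ \;\geqslant\; \bigl(2(v-1) - 2(t+\ell)m - 2\chi\bigr)_+.
\]
Plugging this back into Theorem \ref{tech} yields the two regimes of the statement: when $\chi \geqslant v-(t+\ell)m-1$ the positive part collapses and only the $(c/M)^{\hat a}$ factor survives, giving $(c/M)^{v-1-2m}$ after using $\hat a \geqslant v-1+\chi-2m$ and factoring $(C/M)^{\chi}$; otherwise the tail $(C\,2tm/\sqrt M)^{2(v-(t+\ell)m-1-\chi)}$ remains, and one bounds $(c/M)^{\hat a}\leqslant (c/M)^{v-1}\,(c/M)^{\chi-2m}$ and discards the last factor using $\chi\geqslant 4m$ from Remark \ref{rq:tangle}, recovering the stated form.

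The main obstacle is purely organizational: the only nontrivial work is tracking which edges are centered, bridge, or uncentered across the $2m$ sub-paths when propagating the inequality on $a_1$, and appropriately redistributing powers of $1/M$ between the $C^\chi$ and $c^{v-1}$ factors at the end. Once this accounting is done, the argument is a direct adaptation of the proof of Proposition \ref{regions}.
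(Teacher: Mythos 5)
Your proposal follows the paper's proof essentially step by step: the same auxiliary path $\hat{\mathbf p}$ obtained by deleting the $2m$ bridge edges, the same application of Theorem \ref{tech} combined with Lemma \ref{lem:weightp2} (which absorbs the bridge factors into the weight) and Lemma \ref{b} ($b\leqslant 4\chi$), and the same count $a_1\geqslant \big(2(a-tm)-4\chi-2m\ell\big)_+=2\big((v-1)-(t+\ell)m-\chi\big)_+$, leading to
\[
|g(\mathbf p)|\leqslant n^{o(1)}\prod_{i\in\mathbf i}\Big(\frac{1}{d_i^+}\Big)^2\frac{C^{\chi}}{\delta^{2(tm-v)}}\Big(\frac{c}{M}\Big)^{a-2m}\Big(\frac{Ctm}{\sqrt M}\Big)^{2\big((v-1)-(t+\ell)m-\chi\big)_+},
\]
which is exactly the paper's intermediate display; in the regime $\chi\geqslant v-(t+\ell)m-1$ this yields the first bullet, since $(c/M)^{a-2m}=(c/M)^{\chi}(c/M)^{v-1-2m}$.

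The one defective step is your final manipulation in the second regime. Writing $(c/M)^{\hat a}\leqslant (c/M)^{v-1}(c/M)^{\chi-2m}$ and then discarding $(c/M)^{\chi-2m}$ does not recover the stated bound: once that factor is thrown away, the only remaining $\chi$-dependence is a constant to the power $\chi$ (coming from $C^{\chi+a_1}$ and $3^{b}\leqslant 3^{4\chi}$), not $(C/M)^{\chi}$, so your bound is weaker than the printed one by a factor $M^{\chi}$. The information $\chi\geqslant 4m$ from Remark \ref{rq:tangle} cannot repair this: trading $C^{\chi}$ for $(C/M)^{\chi}$ costs $M^{\chi}\geqslant M^{4m}$, far more than the factor $(M/c)^{2m}$ you were trying to absorb. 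To be fair, the printed second bullet, with $(c/M)^{v-1}$ sitting next to $(C/M)^{\chi}$, is not what the paper's own derivation produces either: the displayed estimate above gives $(C/M)^{\chi}(c/M)^{v-1-2m}$ in both regimes, and this weaker form is all that is needed in the subsequent treatment of $\mathcal L'$, where the target $o(1)\,n^{2m+3}(c\tilde\rho)^{2m(t+\ell)}$ leaves an $n^{2m}$ slack. So the correct fix is simply to stop at the displayed estimate and state the second regime with exponent $v-1-2m$, rather than attempting to match the exponent $v-1$.
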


\subsection{Asymptotic analysis.}

All the computations in this section have already been done in Section \ref{sec:asymptotic_analysis}, se we do not write the details. Go back to \eqref{ERTL} and decompose the sum according to $a,v,\mathbf{i}$: 
$$\mathbf{E}\left[\Vert R^{t,\ell}\Vert^{2m} \right] \leqslant \sum_{v=2}^{2mt} ~~ \sum_{\mathbf{i}=(i_1, ..., i_v)} ~~\sum_{\chi=4m}^{2tm-v+1}\left( \sum_{\mathbf{p} \in X_{m,\ell}^{a,v}(\mathbf{i})} |g(\mathbf{p})| \right)= \mathcal{H}_1'+\mathcal{H}_2'+\mathcal{L}'$$
where
\begin{align}
\mathcal{H}'_1 &=  \sum_{v=2}^{m(t+\ell)+1} ~~ \sum_{i_1, ..., i_v} ~~\sum_{\chi=4m}^{2tm-v+1}\left( \sum_{\mathbf{p} \in X_{m,\ell}^{a,v}(\mathbf{i})} |g(\mathbf{p})| \right)\\ \nonumber \\
\mathcal{H}'_2 &=  \sum_{v=m(t+\ell)+2}^{2m(t-2)} ~~ \sum_{i_1, ..., i_v} ~~\sum_{\chi=v-(t+\ell)m-1}^{2tm-v+1}\left( \sum_{\mathbf{p} \in X_{m,\ell}^{a,v}(\mathbf{i})} |g(\mathbf{p})| \right)\\ \nonumber \\
\mathcal{L}' &=  \sum_{v=m(t+\ell)+2}^{2mt} ~~ \sum_{i_1, ..., i_v} ~~\sum_{\chi=4m}^{v-tm-\ell m-2}\left( \sum_{\mathbf{p} \in X_{m,\ell}^{a,v}(\mathbf{i})} |g(\mathbf{p})| \right).
\end{align}

Each one of those terms can be bounded by the appropriate quantity as requested in Proposition \ref{control2}, that is $o(1) n^{2m+3}(c\tilde{\rho})^{2m(t+\ell)}$. 

For example, in $\mathcal{H}_1'$, we sum over indices such that $v \leqslant (t+\ell)m + 1$. We then have 
\begin{align*}
\sum_{\mathbf{p} \in X_{m,\ell}^{a,v}(\mathbf{i})} |g(\mathbf{p})| &\leqslant   \sum_{\mathbf{p} \in X_{m,\ell}^{a,v}(\mathbf{i})} \frac{n^{o(1)}}{\delta^{2(tm-v)}} \prod_{i \in \mathbf{i}} \left(\frac{1}{d_i^+}\right)^2  \left( \frac{C}{M} \right)^{\chi} \left( \frac{c}{M} \right)^{v-1-2m} \\
&\leqslant  \left( \prod_{i \in \mathbf{i}} d_{i}^+ d_i^- \right)C^\chi n^{\frac{45}{\A} + \frac{17}{\A}  \chi} \frac{n^{o(1)}}{\delta^{2(tm-v)}} \prod_{i \in \mathbf{i}} \left(\frac{1}{d_i^+}\right)^2  \left( \frac{C}{M} \right)^{\chi} \left( \frac{c}{M} \right)^{v-1-2m} \\
&\leqslant \left( \prod_{i \in \mathbf{i}} \frac{d_{i}^-}{ d_i^+} \right) \frac{ n^{\frac{45}{\A} } }{\delta^{2(tm-v)}}  \left(Cn^{-\gamma} \right)^{\chi} \left(cM^{-1} \right)^{v-1-2m}
\end{align*}
with $\gamma = 1-17/\A \in ]0,1[$. As noted in Remark \ref{rq:tangle}, if $\mathbf{p}$ is in $\mathscr{C}'_{m,\ell}$, then $\chi$ cannot be less than $4m$. We thus have \begin{align*}
\mathcal{H}_1 &\leqslant \sum_{v=2}^{mt+m\ell+1}\left(\frac{M}{c}\right)^{2m+1}\frac{  n^{\frac{45}{\A} } }{ \delta^{2(tm-v)}}  \sum_{\mathbf{i}}\left( cM^{-1} \right)^{v}  \left( \prod_{i \in \mathbf{i}} \frac{d_{i}^-}{ d_i^+} \right) \left\lbrace\sum_{\chi=4m}^{2tm-v+1}   \left( Cn^{-\gamma} \right)^{\chi}\right\rbrace \\
&\leqslant \sum_{v=2}^{mt+m\ell+1}n^{2m+1+o(1)}\frac{  n^{\frac{45}{\A} } }{ \delta^{2(tm-v)}}  \sum_{\mathbf{i}}\left( cM^{-1}\right)^{v}  \left( \prod_{i \in \mathbf{i}} \frac{d_{i}^-}{ d_i^+} \right) \left( Cn^{-\gamma} \right)^{4m} \left\lbrace\sum_{\chi=0}^{2m(t-1)-v+1}   \left(Cn^{-\gamma} \right)^{\chi}\right\rbrace \\
\end{align*}
The sum in $\chi$ (between braces) is a bounded by $2$ if $n$ is large enough and the sum in $\mathbf{i}$ is bounded by $(c \rho)^{2v}$, hence
\begin{align*}\mathcal{H}_1 \leqslant 2n^{2+2m+o(1)-4m\gamma } \sum_{v=2}^{mt+m\ell+1}\frac{ (c\rho)^{2v} }{ \delta^{2(tm-v)}} &\leqslant n^{2m+2-4m\gamma} \delta^{-2tm} \frac{(c\delta \rho)^{2mt +2m\ell -2} -1}{(c\delta \rho)-1} (c\delta \rho)^2 \\
&\leqslant n^{2m+3-4m\gamma}\delta^{2\ell m} (c\tilde{\rho})^{2mt+2m\ell}.
\end{align*}
To conclude, note that $\delta^{2tm} = n^{2m\alpha/\ln(\Delta)}$; when $\alpha$ is chosen to be strictly smaller than $2\ln(\Delta)\gamma$, the term $n^{-4m\gamma} \delta^{2m\ell}$ becomes $o(1)$.

\bigskip

We bound $\mathcal{H}'_2$ and $\mathcal{L}'$ in the same way, adapting the computations already done in the preceding sections. 

\vspace*{1cm}
$$\star\star\star$$
\vspace*{1cm}

\appendix 

\section{Algebraic tools.}\label{app:algebra}

In this section, we prove Lemma \ref{quanti_bauer_fike}. We begin with a classical theorem (\cite{bauer}) connecting the eigenvalues of any diagonalizable matrix $A$ with the eigenvalues of any perturbation of $A$. If $M$ is a matrix, we note $\sigma(M)$ the set of its eigenvalues.

\begin{theorem}[Bauer-Fike]\label{thm:Bauer-Fike}
Let $A$ be a diagonalizable matrix, $A=PDP^{-1}$ with $P$ invertible and $D$ diagonal, and let $H$ be any matrix. 
\begin{enumerate}
\item Define $\varepsilon = \Vert P\Vert  \cdot \Vert P^{-1}\Vert  \cdot \Vert H \Vert $. Then, 
\begin{equation}\label{bauer-fike}
\sigma(A+H) \subset \bigcup_{\lambda \in \sigma(A)} B(\lambda, \varepsilon).
\end{equation}
\item If $I$ is a subset of $\{1, ..., n \}$ such that 
\[
\bigcup_{i \in I} B(\lambda_i, \varepsilon) \cap \bigcup_{i \notin I} B(\lambda_i, \varepsilon) = \emptyset 
\]
then the number of eigenvalues of $A+H$ lying in $\bigcup_{i \in I} B(\lambda_i, \varepsilon)$ is exactly $\#I$.
\end{enumerate}

\end{theorem}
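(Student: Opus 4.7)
My plan is to prove the Bauer-Fike theorem in two parts, following the classical strategy. For part (1), I would let $\mu$ be an arbitrary eigenvalue of $A+H$ with associated nonzero eigenvector $v$. If $\mu \in \sigma(A)$ there is nothing to prove, so I assume $\mu \notin \sigma(A)$, which makes $D - \mu I$ invertible. Starting from $(A+H-\mu I)v = 0$ and substituting $A = PDP^{-1}$, I would rewrite this as
\[
P^{-1}v = -(D-\mu I)^{-1} P^{-1} H v,
\]
then take operator norms of both sides. Combining with the elementary bound $\|v\| \leqslant \|P\| \cdot \|P^{-1}v\|$ and the fact that $(D-\mu I)^{-1}$ is diagonal (so its norm equals $1/\min_{\lambda \in \sigma(A)}|\mu - \lambda|$), rearrangement yields $\min_{\lambda \in \sigma(A)}|\mu - \lambda| \leqslant \|P\|\cdot\|P^{-1}\|\cdot\|H\| = \varepsilon$, which is exactly \eqref{bauer-fike}.

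For part (2), I would use a homotopy argument. Introduce the one-parameter family $A_t := A + tH$ for $t \in [0,1]$, connecting $A$ to $A+H$. Applying part (1) to the perturbation $tH$ (whose operator norm is $t\|H\| \leqslant \|H\|$) shows that for every $t \in [0,1]$, all eigenvalues of $A_t$ lie in the closed set $\bigcup_{\lambda \in \sigma(A)} \overline{B}(\lambda, \varepsilon)$. Write $U_I := \bigcup_{i \in I} B(\lambda_i, \varepsilon)$ and $U_{I^c} := \bigcup_{i \notin I} B(\lambda_i, \varepsilon)$; by hypothesis these two open sets are disjoint. At $t=0$ the spectrum of $A_0 = A$ contains exactly $\#I$ eigenvalues (counted with multiplicity) in $U_I$ and $n - \#I$ in $U_{I^c}$. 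As $t$ varies, each eigenvalue of $A_t$ must always lie either in $U_I$ or in $U_{I^c}$; if the eigenvalues depend continuously on $t$, none can jump between these disjoint regions, so the count is preserved at $t=1$.

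The main (and essentially only) delicate point is justifying continuity of the eigenvalues with the correct multiplicities along the path. I would invoke the standard fact that for a family of monic polynomials whose coefficients depend continuously on a parameter $t$, the roots vary continuously as an unordered multiset in $\mathbb{C}^n$ (equivalently, in the Hausdorff distance). Applied to the characteristic polynomial $p_t(z) = \det(zI - A_t)$, whose coefficients are polynomial in $t$, this immediately provides the continuous selection of $n$ eigenvalues needed to run the homotopy argument. No further machinery is required, and the deduction of Lemma \ref{quanti_bauer_fike} from Theorem \ref{thm:Bauer-Fike} is then an immediate application to $A = xy^\top$, whose nonzero eigenvalue is $\mu = \langle x, y\rangle$ with eigenvector $x$ and with $\|P\|\cdot\|P^{-1}\|$ controlled by $\|x\|\|y\|/|\mu|$ up to a factor of $2$.
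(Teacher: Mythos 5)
Your proposal is correct. Part (1) is essentially the paper's argument in eigenvector form: where you write $(A+H-\mu I)v=0$ and rearrange to $P^{-1}v = -(D-\mu I)^{-1}P^{-1}Hv$ before taking norms, the paper factors $A+H-\mu I = P(D-\mu I)\bigl(I+(D-\mu I)^{-1}P^{-1}HP\bigr)P^{-1}$ and deduces that $\Vert (D-\mu I)^{-1}P^{-1}HP\Vert \geqslant 1$ from singularity; the two manipulations are the same computation and give the same bound $\min_i|\mu-\lambda_i|\leqslant \Vert P\Vert\,\Vert P^{-1}\Vert\,\Vert H\Vert$. Part (2) uses the same homotopy $A_t=A+tH$ and the same observation that, by part (1) applied to $tH$, the spectrum of $A_t$ stays inside the union of the two separated regions for all $t$; the only genuine difference is how the invariance of the count is justified. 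You invoke the standard fact that the roots of a monic polynomial depend continuously on its coefficients as an unordered multiset, whereas the paper makes this quantitative directly via the argument principle, writing the number of roots of $p_t$ inside a Jordan curve separating the two regions as $\frac{1}{2i\pi}\oint_\gamma \frac{p_t'(\zeta)}{p_t(\zeta)}\,\mathrm{d}\zeta$ and noting this is a continuous integer-valued function of $t$, hence constant. The two routes are equivalent in substance (the continuity-of-roots theorem is itself usually proved by Rouch\'e or the argument principle); the paper's version is self-contained, while yours outsources the delicate step to a cited standard result, which is perfectly acceptable. One small point to keep clean in your write-up: part (1) yields eigenvalues in \emph{closed} balls $|\mu-\lambda_i|\leqslant\varepsilon$, so the separation hypothesis should be read so that the two unions remain at positive distance (as the paper implicitly does when choosing the Jordan curve), which is what makes the ``no jumping'' step airtight.
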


Hence, the spectrum of the perturbed matrix $A+H$ is entirely contained in the $\varepsilon$-blowup around the spectrum of $A$ (see also Figure \ref{contour}). Note that whenever $A$ is hermitian, the matrix $P$ is unitary and $\Vert P\Vert =\Vert P^{-1}\Vert =1$. Therefore, the ``eigenvalue maximal perturbation", namely $\varepsilon$, depends on the amplitude of the perturbation matrix (i.e. the term $\Vert H\Vert $) and on the ``lack of hermitian-ness" of the matrix $A$ (since we always have $\Vert P\Vert \cdot \Vert P^{-1}\Vert  \geqslant 1$ ).  

Here is the entertaining proof of the Bauer-Fike theorem.

\begin{proof}[Proof of the first point]
Let $\mu$ be an eigenvalue of the perturbed matrix $A+H$; then $A+H - \mu \mathrm{Id}$ is singular. Suppose that $\mu \notin \sigma(A)$; in this case, $D-\mu \mathrm{I}$ is nonsingular, and we have 
$$A+H-\mu \mathrm{I} = P(D- \mu\mathrm{I} ) (\mathrm{I} + (D- \mu \mathrm{I})^{-1}P^{-1}HP )P^{-1}.$$

This shows that $\mathrm{I} + (D- \mu \mathrm{I})^{-1}P^{-1}HP$ is singular, so $-1$ is an eigenvalue of $M:=(D- \mu \mathrm{I})^{-1}P^{-1}HP$; in particular, $1\leqslant \Vert M\Vert  \leqslant \Vert (D-\mu\mathrm{I})^{-1}\Vert \cdot \Vert P^{-1}\Vert \cdot\Vert H\Vert \cdot\Vert P\Vert $. It is easy to see that the norm of the diagonal matrix $(D-\mu \mathrm{I})^{-1}$ is $|\lambda_k - \mu|^{-1}$, where $k$ is such that $|\lambda_k - \mu| = \min |\lambda_i - \mu|$. This proves the inequality $|\lambda_k - \mu|\leqslant  \Vert P^{-1}\Vert \cdot\Vert H\Vert \cdot\Vert P\Vert $ which is the claim \eqref{bauer-fike}.
\end{proof}

\begin{figure}[H]\centering
\includegraphics[scale=0.8]{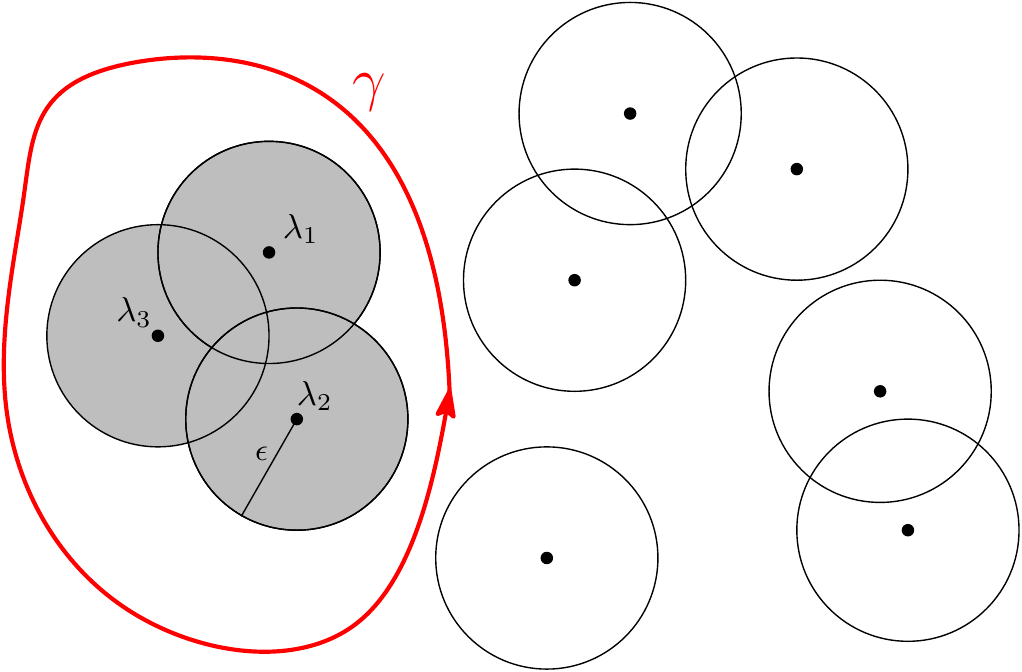}  
\caption{Black dots denote the spectrum of $A=A_0$. All the eigenvalues of $A+H$ are inside the circles and the number of eigenvalues of $A+H$ inside the grey zone is equal to exactly $3$. }\label{contour}
\end{figure}

\begin{proof}[Proof of the second point]
Let $s$ be in $[0,1]$. Note $A_s = A + s H$ and $p_s (z) = \mathrm{det}(A_s - z \mathrm{Id})$.  The eigenvalues of $A$ are the roots of $p_0$ and those of $A+H$ are the roots of $p_1$. Let $\gamma$ be a simple Jordan curve in the complex plane and let $U$ be the bounded connected component of $\mathbb{C} \setminus \gamma$ and $V$ the other component; suppose that $\cup_{i \in I} B(\lambda_i, \epsilon) \subset U$ and $\cup_{i \notin I} B(\lambda_i, \epsilon)\subset V$ (see figure \ref{contour}). Then, the argument principle yelds that the number $n(s)$ of roots of $p_s$ in $U$ is equal to 
$$\frac{1}{2i\pi} \oint_\gamma \frac{p'_s(\zeta)}{p_s(\zeta)}\mathrm{d}\zeta. $$

The polynomial $p_s$ depends continuously on the coefficients of $A_s$, so the application $s \mapsto n(s)$ is continuous from $[0,1]$ into $\mathbb{N}$, so by connectedness it is constant. We thus have $n(0) = n(1)$ and it is clear that $n(0) = \#I$.

\end{proof}

In order to use the Bauer-Fike theorem, we need a control on the \emph{condition number} of $P$, that is $c(P) = \Vert P\Vert \cdot\Vert P^{-1}\Vert $. When $A$ has rank $1$ this can be easily done; note that every rank $1$ matrix can be written $xy^\top$ with $x,y$ two nonzero vectors.

\begin{prop}\label{app_prop:rank1}
Let $A$ be a diagonalizable matrix with rank $1$, $A=PDP^{-1}$ with $P$ invertible and $D$ diagonal, say $D = \mathrm{diag}(\mu, 0, ..., 0)$ with $\mu$ the unique non-zero eigenvalue of $A$. Let $x,y$ be two vectors such that $A = xy^\top$.  Then, $\mu$ is equal to $\langle x, y\rangle$ and 
\begin{equation}\label{condition_number}
 c(P) \leqslant \frac{2\Vert x\Vert ^2\Vert y\Vert ^2}{\mu^2}.
\end{equation}
\end{prop}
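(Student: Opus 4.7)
The first assertion is immediate: $Ax = x(y^\top x) = \langle x, y\rangle x$, so $x$ is an eigenvector with eigenvalue $\mu = \langle x, y\rangle$. Note $\mu$ must be nonzero, because a rank-$1$ matrix whose only eigenvalue is $0$ is nilpotent and therefore not diagonalizable.

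For the condition-number estimate my strategy is to exhibit an \emph{explicit} diagonalizer $P$ which is as close to orthogonal as possible, and then to diagonalize the two-dimensional perturbation that separates it from being orthogonal. Concretely, I will take
\[
P = \bigl[\, u \mid Q \,\bigr], \qquad u := x/\|x\|,
\]
where $Q$ is an $n\times(n-1)$ matrix whose columns form any orthonormal basis of $\ker A = y^\perp$. A quick check gives $AP = [\,\mu u \mid 0\,] = PD$ with $D = \mathrm{diag}(\mu, 0, \ldots, 0)$, so this $P$ is a valid diagonalizer of $A$.

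The heart of the argument is the spectral analysis of $P^\top P$. Writing $v := Q^\top u$, the Gram matrix takes the block form $\bigl(\begin{smallmatrix} 1 & v^\top \\ v & I_{n-1} \end{smallmatrix}\bigr)$. Using that $QQ^\top$ is the orthogonal projection onto $y^\perp$, I will compute
\[
\|v\|^2 = 1 - \frac{(u^\top y)^2}{\|y\|^2} = 1 - \frac{\mu^2}{\|x\|^2\|y\|^2} =: \sin^2\theta,
\]
and then observe that $P^\top P$ acts as the identity on the orthogonal complement of the plane $\mathrm{span}\{e_1,\, (0,\, v/\|v\|)\}$, and acts on that plane as the $2\times 2$ matrix $\bigl(\begin{smallmatrix} 1 & \|v\| \\ \|v\| & 1 \end{smallmatrix}\bigr)$, whose eigenvalues are $1 \pm |\sin\theta|$. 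This immediately yields $\|P\|^2 = 1 + |\sin\theta|$ and $\|P^{-1}\|^2 = 1/(1-|\sin\theta|)$.

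The last step is the algebraic manipulation
\[
c(P)^2 = \frac{1+|\sin\theta|}{1-|\sin\theta|} = \frac{(1+|\sin\theta|)^2}{\cos^2\theta} \leq \frac{4}{\cos^2\theta},
\]
which gives $c(P) \leq 2/|\cos\theta| \leq 2/\cos^2\theta = 2\|x\|^2\|y\|^2/\mu^2$, using $|\cos\theta|\leq 1$ at the last step. The degenerate case $x\parallel y$ (i.e.\ $\sin\theta=0$) makes $P$ orthogonal and $c(P)=1$, trivially within the bound. There is no serious obstacle here: the only real idea is that selecting an \emph{orthonormal} basis of $\ker A$ turns the whole condition-number calculation into an elementary $2\times 2$ eigenvalue problem governed by the single scalar $\cos\theta = \langle x,y\rangle/(\|x\|\|y\|)$ which measures the angle between the right and left eigendirections of $A$.
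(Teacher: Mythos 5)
Your proposal is correct and takes essentially the same approach as the paper: you choose the same diagonalizer (normalized $x$ completed by an orthonormal basis of $y^\perp$) and arrive at the same exact condition number $c(P)=\sqrt{(1+s)/(1-s)}$, where $s=\sqrt{1-\mu^2/(\Vert x\Vert^2\Vert y\Vert^2)}$ is the sine of the angle between $x$ and $y$, before concluding with an elementary estimate. The only cosmetic difference is that you extract the singular values directly from the Gram matrix $P^\top P$, whereas the paper multiplies by a unitary matrix to reduce to a $2\times 2$ block; both yield the same quantity and the same final bound.
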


\begin{proof}First, note that if $A = xy^\top$, then by Sylvester's determinant formula, for every $z$ we have $\mathrm{det}(z\mathrm{I}-xy^\top) = z^n(1 - z^{-1} y^\top x) = z^{n-1}(z-\langle x, y \rangle)$, so the eigenvalues of $A$ are $0$ and $\langle x,y\rangle$; indeed, if $A$ is diagonalizable and has rank $1$, then necessarily $\langle x,y \rangle \neq 0$ and $\mu=\langle x, y \rangle$. 

We first suppose that $\Vert x\Vert =\Vert y\Vert =1$. The right-eigenvector associated with $\mu$ is $x$, the left-eigenvector is $y^\top$. Every basis of $\mathrm{vect}(y)^\perp$ provides a family of right-eigenvectors for the eigenvalue $0$ and every basis of $\mathrm{vect}(x)^\perp$ provides a family of left-eigenvectors for the eigenvalue $0$. For every orthonormal basis of $\mathrm{vect}(y)^\perp$, say $(e_2, ..., e_n)$,  define a matrix by  $P = (x, e_2, ..., e_n)$. Then $P$ is a diagonalization matrix for $A$. Now, define $X=(y, e_2, ..., e_n)$: this matrix is unitary and we can check that 
$$X^* P = \begin{pmatrix}
\mu & 0 &0  & ... &&  0 \\
\langle x, e_2 \rangle & 1 & 0 & ... && 0 \\
\langle x, e_3 \rangle & 0 & 1 & ... && 0 \\
\vdots &&&\ddots && \vdots \\
&&&&1&0\\
\langle x, e_n \rangle & 0 & ...&& 0 & 1
\end{pmatrix}$$
We can also choose the basis $(e_i)$ so that $x$ belongs to $\mathrm{vect}(y, e_2)$. Let $b$ be a real number such that $x=\mu y + be_2$. As $\Vert x\Vert =1$ and $\mu \neq 0$, we have $b \in ]-1, 1[$ we must have $b^2 = 1-\mu^2$ and $b \in ]-1, 1[$. Then, 
$$X^* P = \begin{pmatrix}
\mu & 0 \\ 
b & 1 \\
&& \mathrm{I}_{n-2}
\end{pmatrix}.$$

We thus have proven that if 
$$R = \begin{pmatrix}
\mu & 0 \\ 
b & 1 
\end{pmatrix}$$

then $c(P) = c(X^* P) = c(R)$, and the condition number $c(R)$ can be computed; indeed, we find $c(R) = \sqrt{(1+|b|)/(1-|b|)}$. Remember that $|b|=\sqrt{1-\mu^2} \leqslant 1 - \mu^2/2$. Let $f$ be the increasing function defined on $[0, 1[$ by $f: t \mapsto \sqrt{(1+t)/(1-t)}$. Then $c(R) = f(|b|) \leqslant f(1-\mu^2/2)$ and it can be quickly checked, using $\sqrt{s-1} \leqslant s/2$, that $f(1-\mu^2/2) \leqslant 2/\mu^2$. We thus have proven that $c(P) \leqslant 2/\mu^2$. 
 
 Now, suppose that $\Vert x\Vert $ or $\Vert y\Vert $ are not equal to $1$ and define $\tilde{x} = x/\Vert x\Vert $ and $\tilde{y}=y/\Vert y\Vert $. Set $\tilde{A} = \tilde{x}\tilde{y}^\top$ so that $\Vert x\Vert \Vert y\Vert \tilde{A} = A$. Note $r=\Vert x\Vert \Vert y\Vert $. We have $\tilde{A} = P\tilde{D}P^{-1}$ with $\tilde{D} = \mathrm{diag}(\mu/r, 0, ..., 0)$ and $c(P) \leqslant 2r^2/\mu^2$ by the preceding arguments. As we also have $A = PDP^{-1}$, this yelds the final conclusion
 $$c(P) \leqslant \frac{2\Vert x\Vert ^2\Vert y\Vert ^2}{\mu^2}.$$
\end{proof}

We now conclude the proof of Lemma \ref{quanti_bauer_fike} on Theorem \ref{thm:Bauer-Fike} and Proposition \ref{app_prop:rank1}.

\begin{proof}[Proof of Lemma \ref{quanti_bauer_fike}] Apply the first point of the Bauer-Fike theorem to the matrix $M+H$: all the eigenvalues of $M+H$ lie in the union of the balls $B(\lambda, \varepsilon)$ with $\varepsilon = c(P) \Vert H\Vert $. As $M$ has rank $1$, apply Proposition \ref{app_prop:rank1}: $c(P) \leqslant 2\Vert x\Vert ^2\Vert y\Vert ^2\mu^{-2}$. Now, apply the second part of the Bauer-Fike theorem and suppose that $B(\mu, \varepsilon)$ and $B(0,\varepsilon)$ are disjoint. There is exactly one eigenvalue of $M$ in $B(\mu, \varepsilon)$ which is $\mu$, so there is exactly one eigenvalue of $M+H$ in $B(\mu, \varepsilon)$ and all other eigenvalues are in $B(0, \varepsilon)$. 
\end{proof}

\section{Proof of Theorem \ref{tech}.}\label{app:tech}

In this appendix, we prove Theorem \ref{tech} by adapting the arguments of the proof of Proposition 8 of \cite{bordenave2015} to our setting. All the required definitions and notations have already been introduced in Section \ref{tech_section} (page \pageref{tech_section}). The proof begins with the simple case where all edges of  $\mathfrak{p}$ are consistent and then goes on to the general case. We start with a preliminary remark.

\begin{remarque}
Remember that $c$ is a constant arbitrarily close to $1$. As $a \leqslant N \leqslant \sqrt{M}$, we have $(M-a)^{-1} \leqslant (M-N)^{-1} \leqslant (M-\sqrt{M})^{-1}$ and when $n$ is large, this is smaller than $c M^{-1}$. This inequality will be used multiple times in the proof of Theorem \ref{tech}. 
\end{remarque}

\subsection{Proof, part I: all edges are consistent.}

\subsubsection{Definitions of some useful sets.}

This section deals with the general case, where some edges might not be simple in the proto-path. However, we suppose for the moment that no edge is inconsistent. First, define sets $T,T_q$ as follows: 

\begin{itemize}
\item $T$ is the set of all edges such that $w'_i >0$. Those edges appear after $p$; they can appear both before and after $p$. We note $d = \#T$. 
\item $T_q$ is the set of all edges such that $w_i=q$ (with $q>0$).
\end{itemize}

The sets $T_q$ are distinct, but $T$ and $T_q$ might have a nonempty intersection. However, we still have 
$$ F(\mathfrak{p}) = \omega(\mathfrak{p}) \mathbf{E}\left[\prod_{q>0} \prod_{i \in T_q} B(y_i)^q \prod_{i \in  T}B'(y_i) \right].$$

We note $\mathbf{1}_\Omega = \prod_{i \in T}B'(y_i)$. Some of the edges $y_i$ with $i\in T$ might also appear in the proto-path before $p$, and in this case $B(y_i)B'(y_i) = (1-1/M)B'(y_i)$; we must keep track of these edges. We define: 
\begin{itemize}
\item $T'_q = \{i: w_i = q, w'_i>0 \}$ and $d'_q = \#T'_q$, 
\item $T^*_q = \{i: w_i = q, w'_i=0 \}$ so that $T'_q \cup T^*_q = T_q$ and $T'_q, T^*_q$ are disjoint.
\end{itemize}

Through the definition of $T^*_q$, we see that $|T^*_1| = a_1$, the number of simple (and consistent) edges of the proto-path, before $p$. Noting $\zeta = \sum_{q>0} qd'_q$, this yelds

\begin{equation}\label{TP_exp}
 F(\mathfrak{p}) = \omega(\mathfrak{p})(1-1/M)^{\zeta} \mathbf{E}\left[\mathbf{1}_\Omega \prod_{q>0} \prod_{i \in T^*_q} B(y_i)^q \right].
\end{equation}

The greatest contribution to the expectation \eqref{TP_exp} is due to the $q=1$ factor, so we are going to split the edges into two parts, those matched with another edge in some $T_q^*$ and those who are not.

\begin{itemize}
\item $\hat{T}_1$ is the set of all $i \in T^*_1$ such that there is a $j$ in $T^*_q$ for some $q>0$, such that if $y_i = (\mathbf{e}, \mathbf{f})$ and $y_j = (\mathbf{e}', \mathbf{f}')$, then either $\sigma(\mathbf{e}) = \mathbf{f}'$ or $\sigma(\mathbf{e}') = \mathbf{f}$ (or maybe both). 
\item For every $q>1$, $\hat{T}_q$ is the set of all $i \in T^*_q$ such that there is a $j$ in $T^*_1$, such that if $y_i = (\mathbf{e}, \mathbf{f})$ and $y_j = (\mathbf{e}', \mathbf{f}')$, then either $\sigma(\mathbf{e}) = \mathbf{f}'$ or $\sigma(\mathbf{e}') = \mathbf{f}$ (or maybe both). 
\item Finally, note $S_q = T^*_q \setminus \hat{T}_q$. If $i$ is in $S_1$ and  $y_i = (\mathbf{e}, \mathbf{f})$, then either $\sigma(\mathbf{e}) = \mathbf{f}$, or $\sigma(\mathbf{e})$ is some tail $\mathbf{f}$ which does not belong to any other edge of the proto-path $\mathfrak{p}$.
\end{itemize}

Those sets are random as they depend on the environment $\sigma$. Finally, note $X_q = \prod_{i \in S_q}B(y_i)^q$ and $\zeta'= \sum_{q \geqslant 1} q |\hat{T}_q|$. Then, we have

\begin{equation}
\mathbf{1}_\Omega \prod_{q>0} \prod_{i \in T^*_q} B(y_i)^q= \left(\frac{-1}{M} \right)^{\zeta'}\mathbf{1}_\Omega  \prod_{q>0}X_q .
\end{equation}

\subsubsection{First conditionning.}
Let $\mathcal{F}$ be the sigma-algebra generated by 
\begin{itemize}
\item the event $\Omega$, 
\item the matchings $\sigma(\mathbf{e})$ and $\sigma^{-1}(\mathbf{f})$ for every $y_i = (\mathbf{e}, \mathbf{f})$ with $i$ not in $S_1$. 
\end{itemize}

 \begin{lem}\label{lem:esperance1}With the notations given above, if $n$ is large enough we have
\begin{equation}\label{eq:esperance1}|\mathbf{E}[X_1|\mathcal{F}]|\leqslant 8 \left( \frac{3cN}{M\sqrt{M}}\right)^{|S_1|}.\end{equation}
\end{lem}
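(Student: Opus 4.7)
The plan is to exploit two ingredients, both taking place on the $\mathcal{F}$-measurable event $\Omega$ (off $\Omega$, the lemma is used against a factor $\mathbf{1}_\Omega$ and is not needed). First, conditionally on $\mathcal{F}$, the restriction of $\sigma$ to the half-edges whose $\sigma$-image (or preimage) has not been revealed by $\mathcal{F}$ is a uniform random matching. Second, for every $i \in S_1$, both $\mathbf{e}_i$ and $\mathbf{f}_i$ are \emph{free} in this sense. This latter point is the structural heart of the argument and combines three observations: the consistency of $y_i$ rules out $\mathbf{e}_i = \mathbf{e}_j$ or $\mathbf{f}_i = \mathbf{f}_j$ for $j \neq i$; the definition $S_1 = T_1^* \setminus \hat{T}_1$ forbids $\sigma(\mathbf{e}_i)=\mathbf{f}_j$ and $\sigma(\mathbf{e}_j)=\mathbf{f}_i$ for any $j\in\bigcup_q T^*_q$; and on $\Omega$, the identities $\sigma(\mathbf{e}_j)=\mathbf{f}_j$ for $j\in T$ combined with the injectivity of $\sigma$ forbid $\sigma(\mathbf{e}_i)=\mathbf{f}_j$ for $j\in T$. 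A direct count then gives $m \geq M - 2|S_1^c| \geq M - 2N$ for the number $m$ of free heads.

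Next, expanding the product and using the uniformity of the free matching yields
\[
\mathbf{E}[X_1 \mid \mathcal{F}] \;=\; \sum_{k=0}^{s}\binom{s}{k}\left(-\frac{1}{M}\right)^{\!s-k}\!\frac{1}{(m)_k},
\qquad s := |S_1|,
\]
with $(m)_k = m(m-1)\cdots(m-k+1)$. To expose cancellations, I will split $B(y_i) = (Y_i - 1/m) + \epsilon$ where $Y_i = \mathbf{1}_{\sigma(\mathbf{e}_i)=\mathbf{f}_i}$ and $\epsilon := 1/m - 1/M$ satisfies $|\epsilon|\le 2N/(mM) \le 3N/M^2$. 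Each $Y_i - 1/m$ is then a centred fixed-point indicator of a uniform bijection on $m$ elements, and its symmetric mixed moments $e_r := \mathbf{E}[\prod_{i=1}^{r}(Y_i - 1/m)\mid \mathcal{F}]$ satisfy $e_1=0$ and admit bounds of the form $|e_r| \le C_r/m^{2r-1}$ for $r\ge 2$. These are obtainable via the representation $e_r = m^{-r}\Delta^r h(0)$ with $h(k) = m^k/(m)_k$: the function $h$ agrees with a polynomial of degree $<r$ in $k$ up to an $O(k^{r-1}/m^{r-1})$-correction on $\{0,\dots,r\}$, so the $r$-th finite-difference operator kills the polynomial part.

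Substituting these bounds into
\[
\mathbf{E}[X_1 \mid \mathcal{F}] \;=\; \sum_{r=0}^{s}\binom{s}{r}\,\epsilon^{\,s-r}\,e_r,
\]
I will compare each summand to the target $(3cN/(M\sqrt{M}))^{s}$. The pure $\epsilon^s$ term contributes at most $(3N/M^2)^s = (3N/M^{3/2})^s\,M^{-s/2}$, absorbed into the target since $c>1$; each $r\ge 2$ summand carries an extra $m^{-(2r-1)}$ factor which overcompensates the missing $\epsilon^r$ and keeps it below a constant multiple of the target; summing the resulting geometric series in $r$ and absorbing the binomial factors $\binom{s}{r}$ yields the final prefactor $8$.

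The hard part will be the sharp estimation $|e_r| \le C_r/m^{2r-1}$ with constants $C_r$ small enough to survive the binomial summation $\binom{s}{r}$ for $s$ potentially as large as $N\leq\sqrt{M}$; this is essentially a direct transcription of the analogous calculation in the proof of Proposition~8 of \cite{bordenave2015}, and the remaining arithmetic is routine.
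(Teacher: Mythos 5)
Your argument hinges on the claim that, conditionally on $\mathcal{F}$, the unrevealed part of $\sigma$ is a \emph{plain} uniform matching of the free heads onto the free tails, and the exact expansion $\mathbf{E}[X_1\mid\mathcal{F}]=\sum_{k}\binom{s}{k}(-1/M)^{s-k}(m)_k^{-1}$ rests entirely on it. This claim is false. The set $S_1$ is $\mathcal{F}$-measurable, and membership of $i$ in $S_1$ is itself a constraint on the unrevealed values of $\sigma$: by the definition of $\hat{T}_1$ (together with, on $\Omega$, the matchings $\sigma(\mathbf{e}_j)=\mathbf{f}_j$ for $j\in T$ and injectivity of $\sigma$), an index $i\in S_1$ forces $\sigma(\mathbf{e}_i)$ to be either $\mathbf{f}_i$ or a tail belonging to no edge of $\mathfrak{p}$, and dually for $\sigma^{-1}(\mathbf{f}_i)$. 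Under your uniform-matching description, an event such as $\sigma(\mathbf{e}_i)=\mathbf{f}_j$ for distinct $i,j\in S_1$ has probability of order $1/m$, whereas given $\mathcal{F}$ it is impossible (it would contradict $i\in S_1$). So the conditional law is uniform only over bijections compatible with this constraint, and your formula computes the wrong quantity. This is not a removable technicality: the constrained law is exactly what the paper computes, namely $\mathbf{P}(H=k\mid\mathcal{F})\propto\binom{r}{k}(M-a)_k$ for the number $H$ of indices of $S_1$ that are not fixed, and the resulting signed sum has much weaker cancellation than the plain centred moments $e_r$ you invoke: it is controlled via Lemma 9 of \cite{bordenave2015}, which yields a factor of order $\bigl(3r/\sqrt{M-a}\bigr)^r$ rather than the $m^{-(2r-1)}$ decay you rely on. The fact that your route would deliver a bound far smaller than $8\bigl(3cN/(M\sqrt{M})\bigr)^{|S_1|}$ is itself a warning that the conditional model has been oversimplified.

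Even setting this aside, the analytic half of your plan is deferred rather than carried out: the estimates $|e_r|\leqslant C_r/m^{2r-1}$ with constants explicit enough to survive the binomial factors $\binom{s}{r}$ for $s$ as large as $\sqrt{M}$ are precisely the point you label as ``the hard part,'' so the proposal is incomplete on its own terms. The paper avoids all of this by computing the conditional law of $H$ exactly, rewriting $\mathbf{E}[X_1\mid\mathcal{F}]$ as $Z^{-1}(-1)^r\,\mathbf{E}\bigl[(M-a)_Q(-1)^Q\bigr]$ with $Q$ binomial of parameters $(r,1/M)$ and $Z\geqslant\tfrac12(M-a)^r$, and then quoting the ready-made estimate of \cite{bordenave2015} for such signed Pochhammer sums; if you want to salvage your write-up, that is the structure to reproduce.
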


The proof of this lemma relies on the following remark: $|S_1|$ is measurable with respect to $\mathcal{F}$, so if $H$ is the number of $i \in S_1$ such that $\sigma(\mathbf{e}) \neq \mathbf{f}$, then
\begin{equation}\mathbf{E}[X_1 |\mathcal{F}] = \mathbf{E} \left[ \left( 1 - \frac{1}{M} \right)^{|S_1| - H} \left(\frac{1}{M} \right)^H \right].\end{equation}
We first give the law of $H$ conditionnally on $\mathcal{F}$. For simplicity we note $r = |S_1|$. 

\begin{lem} Given $\mathcal{F}$, for every $k$, we have
\begin{equation}\mathbf{P}(H=k | \mathcal{F}) = \frac{\binom{r}{k}(M-a)_{k}}{\sum_{k=0}^{r} \binom{r}{k} (M-a)_k}.\end{equation}
\end{lem}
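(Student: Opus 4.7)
The plan is to compute $\mathbf{P}(H=k\mid\mathcal{F})$ by enumerating, for each $k$, the matchings $\sigma$ that are consistent with the information encoded in $\mathcal{F}$ and satisfy $H=k$. Since the conditional law of $\sigma$ given $\mathcal{F}$ is uniform over matchings compatible with $\mathcal{F}$, it suffices to perform this count and normalise.

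First I would unpack the structure of $\mathcal{F}$. Conditionally on $\mathcal{F}$, the value of $\sigma$ is fixed on the set of heads $\{\mathbf{e}_j:j\notin S_1\}$ and, symmetrically, $\sigma^{-1}$ is fixed on the tails $\{\mathbf{f}_j:j\notin S_1\}$. This specifies a partial matching, and the remaining ``free'' heads and tails are paired by an unknown bijection whose conditional law is uniform. By the consistency assumption, the free heads are precisely the $r$ heads $\{\mathbf{e}_i:i\in S_1\}$ together with some number $E$ of external heads (heads attached to no edge of $\mathfrak{p}$); the free tails split similarly, and the two free sets have common cardinality $M'=r+E$.

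Next I would use the defining property of $S_1$ recalled just above the lemma: for $i\in S_1$, $\sigma(\mathbf{e}_i)$ is either $\mathbf{f}_i$ or a tail not belonging to any edge of $\mathfrak{p}$. Combined with the previous step, this forces each $\mathbf{e}_i$ ($i\in S_1$) to be matched, in the free bijection, either to $\mathbf{f}_i$ or to one of the $E$ external free tails. A configuration with $H=k$ is then built by (i) choosing the $k$-subset of $S_1$ on which $\sigma$ is mismatched, giving $\binom{r}{k}$ options; (ii) injecting these $k$ heads into the $E$ external free tails in $(E)_k$ ways; (iii) setting $\sigma(\mathbf{e}_i)=\mathbf{f}_i$ for the remaining $r-k$ indices; and (iv) completing the bijection by arbitrarily matching the $M'-r$ remaining (external) free heads with the $M'-r$ remaining free tails, contributing a factor $(M'-r)!$.

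Summing $\binom{r}{k}(E)_k(M'-r)!$ over $k$ and dividing, the factor $(M'-r)!$ cancels and yields
\[
\mathbf{P}(H=k\mid\mathcal{F})=\frac{\binom{r}{k}(E)_k}{\sum_{k'=0}^{r}\binom{r}{k'}(E)_{k'}}.
\]
The final step is the identification $E=M-a$, which requires checking that in the fully consistent setting the set $T_{\text{ext}}$ of tails outside $\mathfrak{p}$ (of cardinality $M-a$) is not consumed by $\mathcal{F}$; this is precisely where the structural definition of $S_1$ and the separation of edges into $T$, $T'_q$, $T^*_q$ has to be leveraged, and is the main obstacle of the proof. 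Once this identification is in place, the formula of the lemma follows.
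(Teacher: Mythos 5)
Your counting framework is the same as the paper's: conditionally on $\mathcal{F}$ the unrevealed part of $\sigma$ is uniform, each free head $\mathbf{e}_i$, $i\in S_1$, must go either to its own $\mathbf{f}_i$ or to a free tail outside $\mathfrak{p}$, one enumerates the placements of these $r$ heads ($\binom{r}{k}$ choices of the mismatched set times an injection count into the external free tails), and the completion factor for the remaining free half-edges is the same for every $k$ and cancels upon normalisation. Up to that point the argument is fine, and indeed slightly more careful than the paper, which never mentions the completion factor.

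The problem is that you stop exactly at the step that carries the content of the lemma: the identification $E=M-a$ of the number of external free tails. You call it ``the main obstacle'' and say it ``requires checking'', but you never check it, so what you have proved is the formula with $(E)_k$ in place of $(M-a)_k$, for an $\mathcal{F}$-measurable quantity $E$ that you have not computed. This is not a routine verification one can wave through: $\mathcal{F}$ reveals $\sigma(\mathbf{e}_j)$ for \emph{every} edge $y_j$ with $j\notin S_1$, and nothing in the definitions (nor in the event $\Omega$) prevents such a revealed head --- for instance the head of an edge in $T^*_q$ with $q\geqslant 2$, or in $\hat{T}_1$ --- from being matched to a tail that belongs to no edge of $\mathfrak{p}$. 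On the corresponding atoms of $\mathcal{F}$ some of the $M-a$ external tails are already consumed, so in general one only gets $M-2a\leqslant E\leqslant M-a$, not equality. The paper's own proof disposes of this in one (rather terse) sentence by asserting that the available tails ``are exactly $M-a$''; your write-up correctly localises the difficulty but leaves it unresolved, and the resolution is not the clean identity you hope for --- one must either keep $E$ and observe that $E=M(1-o(1))$ (since at most $a\leqslant\sqrt{M}$ further tails can be blocked), which is all the subsequent estimate on $\mathbf{E}[X_1\,|\,\mathcal{F}]$ actually uses, or argue separately on the configurations where revealed heads exit $\mathfrak{p}$. As it stands, the proposal does not establish the stated formula.
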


\begin{proof}
Let us count the favorable cases for the event $\{H=k\}$ (again, reasonning conditionnally on $\mathcal{F}$). We have to choose those $k$ edges among the $r$ that haven't been matched yet. Once they have been chosen, all the $r-k$ remaining ones have to be matched with one tail not belonging to any edge in the proto-path $\mathfrak{p}$, and those edges are exactly $M-a$. Thus there are $\binom{r}{k}(M-a)_{k}$ favorable cases. The sum in the denominator is the sum of all cases.
\end{proof}

The reader can check that if $a \leqslant \sqrt{M}$, then if $n$ is large enough, for every $k \leqslant a$, we have $(M-a)_k \geqslant (M-a)^k /2 $, so if we note $Z=Z(a, r,M) = \sum_{k=0}^{r} \binom{r}{k} (M-a)_k$ then we have
\begin{equation}\label{ineq:Z}Z \geqslant \frac{1}{2} \sum_{k=0}^r \binom{r}{k} (M-a)^k = \frac{1}{2}(M-a+1)^r \geqslant \frac{1}{2}(M-a)^r.\end{equation}

On the other hand, 
\begin{align}
\mathbf{E}[X|\mathcal{F}] &=  \frac{1}{Z}\sum_{k=0}^r \binom{r}{k}(M-a)_k \left(1 - \frac{1}{M} \right)^k \left( \frac{-1}{M} \right)^{r-k}.\\
&=  \frac{(-1)^r}{Z}\mathbf{E}\left[ (M-a)_Q (-1)^Q \right]
\end{align}

where $Q$ is a random variable with law $\mathcal{B}(r, 1/M)$. Note that 
\begin{align*}
(M-a)_Q (-1)^Q  &= \prod_{n=0}^{Q-1} (M-a-n)\times (-1)  \\
&= \prod_{n=0}^{Q-1}(n-(M-a)).
\end{align*}

We now use (\cite{bordenave2015}, Lemma 9): 

\begin{lem}
Let $z \geqslant 1, r\in \mathbb{N}^*$ and $0<p\leqslant q <1$. Let $B$ a binomial random variable with parameters $r,p$. If $8(1-p/q))^2\leqslant 2zqr^2 \leqslant 1$, then
\begin{equation}
\left|\mathbf{E} \left[ \prod_{n=1}^B \left( zn - \frac{1}{q} \right) \right] \right| \leqslant 4 (r\sqrt{8zq})^r. 
\end{equation}

\end{lem}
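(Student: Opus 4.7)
I would begin by factoring
\[
\prod_{n=1}^B(zn - 1/q) = \left(-\frac{1}{q}\right)^B \phi(B), \qquad \phi(B) := \prod_{n=1}^B (1 - zqn).
\]
Under the hypothesis $2zqr^2\leq 1$, each factor $1-zqn$ lies in $[1-1/(2r),\,1]$ and so $\phi(B) \in [1/2,1]$; however the deterministic bound $|\phi(B)|\leq 1$ only yields $q^{-r}$, which is much coarser than the claimed $(r\sqrt{8zq})^r$. The improvement must therefore come from the alternating factor $(-1/q)^B$ and the second part of the hypothesis.

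The core of the proof is to evaluate $\mathbf{E}\big[(-1/q)^B \phi(B)\big]$ via the probability generating function $G(x)=(1-p+px)^r$ of $B\sim\mathrm{Binom}(r,p)$. I would expand
\[
\phi(B) = \sum_{k=0}^B (-zq)^k e_k(1,2,\dots,B)
\]
with $e_k$ the elementary symmetric polynomial, rewrite each $e_k(1,\dots,B)$ as a linear combination of falling factorials $(B)_j$ with $j\leq 2k$, and use the identity $\mathbf{E}[x^B(B)_j] = (xp)^j (r)_j (1-p+px)^{r-j}$ specialised to $x=-1/q$. The zeroth-order term $\mathbf{E}[(-1/q)^B] = (1-p-p/q)^r$ is already of the advertised size: the hypothesis $8(1-p/q)^2\leq 2zqr^2$ rewrites as $|1-p/q|\leq \tfrac{r}{2}\sqrt{zq}$, and combined with $p\leq q$ and the first part of the hypothesis this gives $|1-p-p/q|\leq r\sqrt{8zq}$.

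The higher-order corrections form a double sum indexed by $k$ and $j$, in which each term contributes a factor $(p/q)^j (r)_j (zq)^k$ multiplied by $|1-p-p/q|^{r-j}\lesssim (r\sqrt{8zq})^{r-j}$. Using the hypothesis $r\sqrt{8zq}\leq 2$, both the sum over $j$ at fixed $k$ and the sum over $k$ form convergent geometric series whose total contribution is at most a constant times $(r\sqrt{8zq})^r$. The main obstacle is precisely this bookkeeping: keeping track of the Stirling-number conversions $B^{2k}\leftrightarrow (B)_j$ and the crude estimates $e_k(1,\dots,B)\leq B^{2k}/(2^k k!)$ so that all constants fit into the advertised prefactor $4$. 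This step is technical but follows the same strategy as in \cite{bordenave2015}.
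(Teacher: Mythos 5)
A preliminary remark on the comparison itself: the paper does not prove this lemma at all — it is imported verbatim from \cite{bordenave2015} (Lemma 9 there) and used as a black box in the proof of Lemma \ref{lem:esperance1}. So your attempt can only be measured against Bordenave's argument, and in outline you do follow the same route: factor $zn-1/q=-(1/q)(1-zqn)$, expand $\prod_{n\leqslant B}(1-zqn)$ in elementary symmetric polynomials, pass to the falling-factorial basis, and exploit $\mathbf{E}\big[x^B(B)_j\big]=(px)^j(r)_j(1-p+px)^{r-j}$ at $x=-1/q$, where the factor $(1-p-p/q)^{r-j}$ carries the cancellation. Your treatment of the leading term is correct: $8(1-p/q)^2\leqslant 2zqr^2$ gives $0\leqslant 1-p/q\leqslant \tfrac r2\sqrt{zq}$, and $p\leqslant q\leqslant \sqrt{zq}/\sqrt2$ follows from $2zqr^2\leqslant 1$, $z\geqslant 1$, so indeed $|1-p-p/q|\leqslant r\sqrt{8zq}$.

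The gap is that the half of the argument which actually constitutes the lemma — showing the correction terms sum to something of the same order, with the stated prefactor $4$ — is asserted rather than proved, and one step as written would fail. Your termwise estimate multiplies $(p/q)^j(r)_j|1-p-p/q|^{r-j}$ by the coefficients $d_{k,j}$ of $e_k(1,\dots,B)$ in the falling-factorial basis, but a \emph{pointwise} bound such as $e_k(1,\dots,B)\leqslant B^{2k}/(2^kk!)$ does not bound those coefficients: pointwise domination of polynomials does not transfer to basis coefficients. You need either nonnegativity of the finite differences of $B\mapsto e_k(1,\dots,B)$, or the identity $d_{k,j}=\Delta^j e_k(1,\dots,\cdot)(0)/j!$ together with monotonicity, which costs an extra factor of order $2^j/j!$ that must then be tracked through the double sum. (Also, the stated pointwise bound is itself incorrect: the valid crude estimate is $e_k(1,\dots,B)\leqslant (B(B+1)/2)^k/k!\leqslant B^{2k}/k!$.) Finally, the ``convergent geometric series'' step needs care when $2k>r$ (the range $j>r$ is killed by $(r)_j=0$, and convergence comes from $(zq)^{k-j/2}\leqslant 1$ plus factorial decay in $k$, not from $r\sqrt{8zq}\leqslant\sqrt2$ alone); carried out with your crude bounds this yields a universal constant, but nothing in the sketch shows it can be taken equal to $4$ as the statement claims — harmless for the application in the paper (only a universal constant feeds the factor $8$ in \eqref{eq:esperance1}), but missing as a proof of the lemma as stated.
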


We apply the lemma with 
\begin{itemize}
\item $q=1/(M-a)$ and $p=1/M$ (they satisfy $p \leqslant q$),
\item the random variable $Q$ as $B$,
\item $z=1$ (we can check that the condition of the lemma is verified).
\end{itemize}

Then, the lemma yelds 
\begin{equation}\left| \mathbf{E}\left[ (-1)^Q (M-a)_Q \right] \right| \leqslant 4 \left(r\sqrt{\frac{8}{M-a}} \right)^r .\end{equation}

We now plug this into $|\mathbf{E}[X_1|\mathcal{F}]|$. Using this and the preliminary remark on $n$ large and using inequality \eqref{ineq:Z}, we get 
\begin{align}
|\mathbf{E}[X_1|\mathcal{F}]|&\leqslant \frac{4}{Z}  \left(\frac{3r}{\sqrt{M-a}} \right)^r \\
&\leqslant \frac{8}{(M-a)^r}\left(\frac{3r}{\sqrt{M-a}} \right)^r \\
&\leqslant 8\left(\frac{3cN}{M\sqrt{M}} \right)^r
\end{align}
This ends the proof of Lemma \ref{lem:esperance1}. As a consequence, we get
\begin{align}
\mathbf{E}\left[ \mathbf{1}_\Omega \prod_{q>0} \prod_{i \in T^*_q} B(y_i)^q\right] &\leqslant 8 \mathbf{E}\left[\left( \frac{3cN}{M\sqrt{M}}\right)^{|S_1|} \left(\frac{1}{M} \right)^{\zeta'}\mathbf{1}_\Omega  \prod_{q>1}|X_q|\right].
\end{align}

\subsubsection{Second conditionning.}Let $\mathcal{G}_i$ be the $\sigma$-algebra generated by 
\begin{itemize}
\item the event $\Omega$, 
\item the matchings $\sigma(\mathbf{e})$ and $\sigma^{-1}(\mathbf{f})$ for every $y_j = (\mathbf{e}, \mathbf{f})$ with $i\neq j$. 
\end{itemize}
The random variables $\zeta',|S_q|$ are $\mathcal{G}_i$-measurable. Fix $i$ in some $S_q$. Then, as $q>1$ we have
\begin{align*}
\mathbf{E}[|B(y_i)|^q |\mathcal{G}_i]&\leqslant \mathbf{E}[|B(y_i)|^2 |\mathcal{G}_i] \\
&= \left(1 - \frac{1}{M} \right)^2 \mathbf{P}(\sigma(\mathbf{e}) = \mathbf{f}|\mathcal{G}_i) + \frac{1}{M^2}\mathbf{P}(\sigma(\mathbf{e}) \neq \mathbf{f}|\mathcal{G}_i)
\end{align*}

Conditionnally on $\mathcal{G}_i$, the head $\mathbf{e}$ cannot be matched with a tail belonging to $y_j$ for $j \neq i$ (recall the definition of $S_p$). Hence, if $M_i$ is the total number of unmatched tails after the matching of all the heads belonging to some $y_j$, we have $\mathbf{P}(\sigma(\mathbf{e}) = \mathbf{f}|\mathcal{G}_i) = 1/M_i$. Remember that if $n$ is large enough, we have $1/(M-a) \leqslant c/M$ (see the preliminary remark). Hence, we have
\begin{align*}
\mathbf{E}[|B(y_i)|^q |\mathcal{G}_i]&\leqslant \mathbf{E}[|B(y_i)|^2 |\mathcal{G}_i] \\
&\leqslant c \left(1 - \frac{1}{M} \right)^2 \frac{1}{M} + c\frac{1}{M^2}\left(1-\frac{1}{M} \right) \\
&\leqslant c\frac{1}{M}\left( 1-\frac{1}{M} \right) \leqslant \frac{c}{M}.
\end{align*}
By conditionning repeatedly on all the $\mathcal{G}_i$ for every $i$ in some $S_q$, for $q>1$, we get 
$$\mathbf{E}\left[ \mathbf{1}_\Omega \prod_{p>0} \prod_{i \in T^*_p} B(y_i)^p\right] 
\leqslant 8 \mathbf{E}\left[\left( \frac{3cN}{M\sqrt{M}}\right)^{|S_1|} \left(\frac{c}{M} \right)^{\zeta' + \sum_{q>1}q|S_q|}\mathbf{1}_\Omega \right].$$
As $\zeta' = \sum_{q>0}q|\hat{T}_q|$, we have $\zeta' + \sum_{q>1}q|S_q| = |\hat{T}_1|+ \sum_{q>1}q (|\hat{T}_q|+|S_q|) = |\hat{T}_1|+ \sum_{q>1}q |T^*_q|$.

\subsubsection{Third conditionning. }We now condition on the sigma-algebra $\mathcal{G}$ generated by the matchings $\sigma(\mathbf{e})$ and $\sigma^{-1}(\mathbf{f})$ for every $y_j = (\mathbf{e}, \mathbf{f})$ with $i\notin T$. Note $\tilde{\Omega}$ the event ``no half-edge belonging to $y_i$ for some $i \notin T$ has been matched with a half-edge $y_j$ with $j$ in $T$". This event is $\mathcal{G}$-measurable, and when $n$ is large enough, 
$$\mathbf{E}[\mathbf{1}_\Omega | \mathcal{G}] \leqslant \mathbf{1}_{\tilde{\Omega}}\left( \frac{c}{M} \right)^{|T|} \leqslant \left( \frac{c}{M} \right)^{|T|}.$$ 

Hence,  
\begin{equation}\mathbf{E}\left[\mathbf{1}_\Omega  \prod_{q>0} \prod_{i \in T^*_q} B(y_i)^q\right] 
\leqslant 8\mathbf{E}\left[\left( \frac{3cN}{M\sqrt{M}}\right)^{|S_1|} \left(\frac{c}{M} \right)^{|\hat{T}_1|+ \sum_{q>1}q|T^*_q|+|T|} \right].\end{equation}

\subsubsection{Endstep.}Recall \eqref{TP_exp}; we have 
\begin{align}
 F(\mathfrak{p}) &= \omega(\mathfrak{p})(1-1/M)^{\zeta} \mathbf{E}\left[\mathbf{1}_\Omega \prod_{q>0} \prod_{i \in T^*_q} B(y_i)^q \right] \\
 &\leqslant 8\omega(\mathfrak{p})(1-1/M)^\zeta  \mathbf{E}\left[\left( \frac{3cN}{M\sqrt{M}}\right)^{|S_1|} \left(\frac{c}{M} \right)^{|\hat{T}_1|+ \sum_{q>1}q |T^*_q|+|T|} \right] \\
 &\leqslant 8\omega(\mathfrak{p})\left( \frac{3cN}{M\sqrt{M}}\right)^{a_1}\mathbf{E}\left[\left( \frac{3cN}{M\sqrt{M}}\right)^{-|\hat{T}1|} \left(\frac{c}{M} \right)^{|\hat{T}_1|+ \sum_{q>1}q|T^*_q|+|T|} \right] \\
  &\leqslant 8\omega(\mathfrak{p})\left( \frac{3cN}{M\sqrt{M}}\right)^{a_1}\mathbf{E}\left[\left( \frac{3N}{\sqrt{M}}\right)^{-|\hat{T}1|} \left(\frac{c}{M} \right)^{ \sum_{q>1}q|T^*_q|+|T|} \right] .
\end{align}
where in the third line we used $a_1 = |T^*_1| = |S_1|+|\hat{T}_1|$. By construction, we have $\sum_{q>0} |T^*_q|+|T|=a$, therefore
\begin{equation}\sum_{q>1}q|T^*_q|+|T| = a - |T ^*_1|+\sum_{q>1}(q-1)|T^*_q|\geqslant a-a_1\end{equation}
and we have $(c/M)^{\sum_{q>1}q|T^*_q|+|T|} \leqslant (c/M)^{a-a_1}$. This finally yields 
$$F(\mathfrak{p}) \leqslant 8\omega(\mathfrak{p})\left(\frac{c}{M} \right)^{a-a_1}\left( \frac{3cN}{M\sqrt{M}}\right)^{a_1}\mathbf{E}\left[\left( \frac{3N}{\sqrt{M}}\right)^{-|\hat{T}1|}  \right].$$

In the next lemma, we bound the expectation on the right side. 
\begin{lem}
If $n$ is large enough,

\begin{equation}
\mathbf{E}\left[\left( \frac{3N}{\sqrt{M}}\right)^{-|\hat{T}1|}  \right] \leqslant 3.
\end{equation} 
\end{lem}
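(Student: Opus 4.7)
The plan is to set $\alpha := \sqrt{M}/(3N)$ and prove the equivalent bound $\mathbf{E}[\alpha^{|\hat{T}_1|}] \leqslant 3$. If $\alpha \leqslant 1$ (equivalently $N \geqslant \sqrt{M}/3$), then $\alpha^{|\hat{T}_1|} \leqslant 1$ pointwise and the claim is trivial, so I focus on the regime $\alpha > 1$. The starting point is the identity
\[
\alpha^{|\hat{T}_1|} = \prod_{i \in T^*_1}\bigl(1 + (\alpha-1)X_i\bigr) = \sum_{S \subseteq T^*_1} (\alpha-1)^{|S|}\, \mathbf{1}_{S \subseteq \hat{T}_1},
\]
where $X_i := \mathbf{1}_{i \in \hat{T}_1}$, so that after taking expectations
\[
\mathbf{E}\bigl[\alpha^{|\hat{T}_1|}\bigr] = \sum_{S \subseteq T^*_1} (\alpha-1)^{|S|}\,\mathbf{P}(S \subseteq \hat{T}_1).
\]

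The core combinatorial step is then to bound $\mathbf{P}(S \subseteq \hat{T}_1)$ by a union bound over ``witnesses''. By definition of $\hat{T}_1$, if $S \subseteq \hat{T}_1$ then for each $i \in S$ with $y_i = (\mathbf{e}_i,\mathbf{f}_i)$ there exists an index $j_i \neq i$ lying in some $T^*_q$ (hence at most $a$ choices) together with a direction $\epsilon_i \in \{+,-\}$ which tells us whether $\sigma(\mathbf{e}_i) = \mathbf{f}_{j_i}$ or $\sigma(\mathbf{e}_{j_i}) = \mathbf{f}_i$. There are at most $(2a)^{|S|}$ such witness configurations. For a fixed configuration, the event prescribes $|S|$ head-to-tail matchings by $\sigma$; if these pairings are not all realisable simultaneously, the event has probability $0$, and otherwise its probability equals $1/(M)_{|S|}$. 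Since $|S| \leqslant a_1 \leqslant a \leqslant N \leqslant \sqrt{M}$, the preliminary remark on $(M-a)^{-1} \leqslant c/M$ yields $1/(M)_{|S|} \leqslant (c/M)^{|S|}$ when $n$ is large. Combining these gives
\[
\mathbf{P}(S \subseteq \hat{T}_1) \leqslant \left(\frac{2ca}{M}\right)^{|S|}.
\]

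Injecting this bound back and applying the binomial theorem over subsets of $T^*_1$ (whose cardinality is $a_1 \leqslant a$), I obtain
\[
\mathbf{E}\bigl[\alpha^{|\hat{T}_1|}\bigr] \leqslant \sum_{k=0}^{a_1} \binom{a_1}{k}\!\left((\alpha-1)\frac{2ca}{M}\right)^{\!k} \leqslant \exp\!\left(\alpha \cdot \frac{2c\,a^2}{M}\right) \leqslant \exp\!\left(\frac{2c}{3}\cdot\frac{a^2}{N\sqrt{M}}\right).
\]
Using $a \leqslant N \leqslant \sqrt{M}$ the exponent is bounded by $2c/3$, and choosing $c$ sufficiently close to $1$ yields $\mathbf{E}[\alpha^{|\hat{T}_1|}] \leqslant e^{2c/3} < 3$, as required.

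The substantive obstacle is not any deep probabilistic estimate but rather the careful bookkeeping in the union bound: one must verify that the factor $(2a)^{|S|}$ correctly absorbs the number of witness configurations and that unrealisable configurations contribute zero (rather than a negative cancellation) so that the one-sided union bound is legitimate despite the signed expansion. Once these points are pinned down, the remaining inequalities are elementary and the final numerical constant $3$ is confortable given that $e^{2/3} \approx 1.95$.
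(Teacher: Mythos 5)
Your expansion $\mathbf{E}[\alpha^{|\hat{T}_1|}]=\sum_{S\subseteq T^*_1}(\alpha-1)^{|S|}\mathbf{P}(S\subseteq\hat{T}_1)$ is fine (and all terms are nonnegative since $\alpha>1$, so there is no sign issue), but the key probabilistic estimate $\mathbf{P}(S\subseteq\hat{T}_1)\leqslant(2ca/M)^{|S|}$ is false, and the reason is not the count of witness configurations but the probability you assign to each one. A fixed configuration $(j_i,\epsilon_i)_{i\in S}$ need not prescribe $|S|$ \emph{distinct} matchings: two elements of $S$ can witness each other through a single matched pair. Concretely, if $i_1,i_2\in T^*_1$ with $y_{i_1}=(\mathbf{e}_1,\mathbf{f}_1)$, $y_{i_2}=(\mathbf{e}_2,\mathbf{f}_2)$, the single event $\{\sigma(\mathbf{e}_1)=\mathbf{f}_2\}$ already puts both $i_1$ and $i_2$ in $\hat{T}_1$, and the corresponding witness configuration (witness $i_2$ for $i_1$ in one direction, witness $i_1$ for $i_2$ in the other) prescribes only one matching, hence has probability of order $1/M$, not $1/(M)_2$. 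Consequently $\mathbf{P}(\{i_1,i_2\}\subseteq\hat{T}_1)\geqslant 1/M$, whereas your bound claims $O(a^2/M^2)$; in the regime where the lemma is applied ($a\leqslant N=2tm=O(\ln(n)^2)$, $M\asymp n$) this is off by a factor of order $M/a^2\to\infty$. In general a set $S$ of size $k$ can be witnessed by as few as $\lceil k/2\rceil$ matchings, so the correct order is $M^{-\lceil k/2\rceil}$ up to combinatorial factors (with roughly $k^{k/2}$ internal pairings), not $(a/M)^k$. This sharing of witnesses is precisely what the paper's proof is built around: it bounds $\mathbf{P}(|\hat{T}_1|=\ell)$ by a pigeonhole argument guaranteeing at least $\lfloor\ell/2\rfloor$ distinct ``mismatched'' couples, pays $(a^2)^{\lfloor\ell/2\rfloor}$ for their choice and $(M-a)^{-\lfloor\ell/2\rfloor}\leqslant(2/\sqrt{M})^{\ell}$ for their probability, and then sums the resulting geometric series $\sum_\ell(2a/3N)^\ell\leqslant 3$.

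If you repair the union bound to account for shared witnesses (covers of $S$ by internal pairs contributing $(c/M)^{\lceil k/2\rceil}$ with $O(k^{k/2}2^{k/2})$ pairings, plus external witnesses contributing an extra $a/M$ each), your exponential-moment computation changes substantially: the term $\binom{a_1}{k}\alpha^k M^{-k/2}$ no longer collapses to a clean $\exp(2c/3)$, and whether the resulting constant stays below $3$ requires a genuinely new bookkeeping step rather than the one-line estimate you give. So as written the proof has a real gap at its central inequality; the generating-function route is salvageable in principle, but only after redoing the witness-counting in the paired form, at which point it essentially reproduces the pigeonhole structure of the paper's argument.
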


\begin{proof}
We have 
$$\mathbf{E}\left[\left( \frac{3N}{\sqrt{M}}\right)^{-|\hat{T}1|}  \right] = \sum_{\ell = 0}^\infty \mathbf{P}(|\hat{T}_1|=\ell) \left( \frac{\sqrt{M}}{3N}\right)^{\ell}.$$

Using the pigeonhole principle, on the event $\{|\hat{T}_1|=\ell \}$, at least $\lfloor \ell / 2 \rfloor$ couples of edges $(y,y')$ are ``mismatched", which means that $\sigma(\mathbf{e})=\mathbf{f}'$ or $\sigma(\mathbf{e}') = \mathbf{f}$. A (very) crude bound for the choice of those  $\lfloor \ell / 2 \rfloor$ couples is $(a^2)^{\lfloor \ell / 2 \rfloor}$. For each choice of those $\lfloor \ell/2\rfloor$ couples, the probability that they are indeed mismatched is at most $(1/(M-a))^{\lfloor \ell/2 \rfloor}$ which is smaller than $(2/\sqrt{M})^\ell$ if $n$ is large enough. In the end, we get 
$$\mathbf{P}(|\hat{T}_1| = \ell) \leqslant a^\ell \left(\frac{2}{\sqrt{M}} \right)^\ell$$
Finally, as $a\leqslant N$, we have 
$$\mathbf{E}\left[\left( \frac{3t}{\sqrt{M}}\right)^{-|\hat{T}1|}  \right] \leqslant  \sum_{\ell = 0}^\infty  \left( \frac{2a}{3N}\right)^{\ell} \leqslant \sum_{\ell = 0}^\infty \left( \frac{2}{3}\right)^{\ell}$$
which ends the proof of the lemma.
\end{proof}

We finally get the desired bound, that is 
\begin{equation}\label{tech:consistent}
|F(\mathfrak{p})| \leqslant 24 \cdot \omega(\mathfrak{p})\left( \frac{c}{M} \right)^{a} \left(\frac{N}{\sqrt{M}} \right)^{a_1}
\end{equation}

\subsection{Proof, part II: some edges are not consistent.}

We now suppose some edges are not consistent: for example, there might be in $\mathfrak{p}$ two edges having the form $y=(\mathbf{e}, \mathbf{f})$ and $y'=(\mathbf{e}, \mathbf{f}')$ with $\mathbf{f} \neq \mathbf{f}'$. Without loss of generality we can suppose $y=y_1$ and $y' = y_a$. The contributions of those two edges in the product has the form $B(y)^w B'(y)^z B(y')^{w'}B'(y')^{z'}$. Note that $B'(y)B'(y')$ is always zero. From this, we see that we can't have $z$ and $z'$ be both non zero. Without loss of generality, we suppose that $z'=0$.

\subsubsection{First case: $z\neq 0$.}

Here, we immediately have 
\begin{equation}
B(y)^w B'(y)^z B(y')^{w'} = B'(y)B(y)^{w}\left(  - \frac{1}{M}\right)^{w'}.
\end{equation}
This expression does not longer rely upon $y'$. Hence, in this case, we have 
\begin{equation}F(\mathfrak{p}) = \frac{1}{(d_\mathbf{e}^+)^{w'}}\left(  - \frac{1}{M}\right)^{w'} F(\mathfrak{q})\end{equation}
where the proto-path $\mathfrak{q}$ is the proto-path $\mathfrak{p}$ without all the $w'$ instances of the $y'$ edge. This new proto-path $\mathfrak{q}$ has length $N-w'$, has $a'=a-1$ distinct edges before $p$, and its number of simple, consistent edges before $p$ is greater than $a_1$. 

\subsubsection{Second case: $z=0$.}

The product is now reduced to $B(y)^w B(y')^{w'}$. After a short development we find that 
\begin{equation}B(y)^w B(y')^{w'} = B(y)^w \left( - \frac{1}{M}\right)^{w'}  + \left( - \frac{1}{M}\right)^{w'}B(y')^{w} - \left( - \frac{1}{M}\right)^{w'+w}
\end{equation}
Hence, $F(\mathfrak{p})$ splits into three parts: 
\begin{align}
F(\mathfrak{p}) &=  \left( - \frac{1}{d_\mathbf{e}^+M}\right)^{w'}F(\mathfrak{q}) +\left( - \frac{1}{d_\mathbf{e}^+ M}\right)^{w}F(\mathfrak{q}') - \left( - \frac{1}{d_\mathbf{e}^+M}\right)^{w'+w}F(\mathfrak{q}''). 
\end{align}

All the three new proto-paths $\mathfrak{q}, \mathfrak{q}', \mathfrak{q}''$ now have 
\begin{itemize}
\item length $N-w', t-w$ and $N-w'-w$,
\item at most $a-1$ distinct edges,
\item less inconsistent edges than $\mathfrak{p}$.
\end{itemize}

\subsubsection{Iteration of the procedure.}

We repeat the procedure as many times as needed to get rid of every inconsistent edge. Each step gives rise to at most $3$ terms having the form 
$$\pm \left(\frac{1}{M}\right)^\alpha \omega(\mathfrak{p}) \mathbf{E}\left[\prod_{i=1}^{a-1}B(y_i)^{w_i} B(y_i) \right]$$
or 
$$\pm \left(\frac{1}{M}\right)^\alpha \omega(\mathfrak{p}) \mathbf{E}\left[\prod_{i=2}^{a}B(y_i)^{w_i} B(y_i) \right]$$
or
$$\pm \left(\frac{1}{M}\right)^\alpha \omega(\mathfrak{p}) \mathbf{E}\left[\prod_{i=2}^{a-1}B(y_i)^{w_i} B(y_i) \right]$$
where $\alpha$ is either $w_a,w_1$ or $w_a+w_1$.

Now, we repeat the procedure for each term. Each step removes one inconsistent edge, so there are no more than $3^b$ steps, and in the end we get at most $3^b$ terms. In each one of the final $3^b$ terms, all edges are consistent so we can apply \eqref{tech:consistent}. The number of simple, consistent edges of those new proto-paths is greater than $a_1$ but still smaller than $N$. Hence, applying \eqref{tech:consistent} to each term, we can bound $|F(\mathfrak{p})|$ with at most $3^b$ terms having the form
$$ 24\cdot \omega(\mathfrak{p})\left( \frac{c}{M} \right)^{a} \left(\frac{N}{\sqrt{M}} \right)^{a_1}$$
which yields the final desired result \eqref{tech:equation1}.

\bibliography{bibli}

\end{document}